\title[Categorical DT/PT correspondence and quasi-BPS categories]{The categorical DT/PT correspondence and quasi-BPS categories for local surfaces}
\author{Tudor P\u adurariu and Yukinobu Toda}
\newtheorem{thm}{Theorem}[section]
\newtheorem{cor}[thm]{Corollary}
\newtheorem{conj}[thm]{Conjecture}
\newtheorem{prop}[thm]{Proposition}
\newtheorem{lemma}[thm]{Lemma}
\theoremstyle{definition}
\newtheorem{defn}[thm]{Definition}
\newtheorem{thm*}[thm]{Theorem$^*$}
\newtheorem{remark}[thm]{Remark}
\newtheorem{step}{Step}
\newcommand{\comment}[1]{}
\renewcommand{\leq}{\leqslant}
\renewcommand{\geq}{\geqslant}
\newcommand{\OO}{\mathcal{O}}
\newcommand{\X}{\mathcal{X}}
\newcommand{\Coh}{\operatorname{Coh}}
\newcommand{\id}{\operatorname{id}}
\newcommand{\Ext}{\operatorname{Ext}}
\newcommand{\Hom}{\operatorname{Hom}}
\newcommand{\Spec}{\operatorname{Spec}}
\newcommand{\GL}{\operatorname{GL}}
\newcommand{\ch}{\operatorname{ch}}
\newcommand{\ssslash}{/\!\!/}
\newcommand{\Tr}{\mathop{\mathrm{Tr}}\nolimits}
\tikzstyle{block}=[draw=black, width=1cm, minimum height=2cm, align=center] 
\tikzstyle{block2}=[draw=black, text width=2cm, minimum height=1cm, align=center] 
\tikzstyle{block3}=[draw=black, text width=2cm, minimum height=1cm, align=center]
\begin{document}
\maketitle
\begin{abstract}
We construct
semiorthogonal decompositions of Donaldson-Thomas (DT) categories for reduced curve classes on
local surfaces 
into products of quasi-BPS categories and Pandharipande-Thomas (PT) categories, 
giving a categorical analogue of the numerical DT/PT correspondence for Calabi-Yau 3-folds. 
The main ingredient is a categorical 
wall-crossing formula for DT/PT quivers
(which appear as Ext-quivers in the DT/PT wall-crossing)
proved in our previous paper. We also study quasi-BPS categories of points on  local surfaces and propose conjectural computations of their K-theory analogous to formulas already known for the three dimensional affine space. 
\end{abstract}

\section{Introduction}

\subsection{The DT/PT correspondence for Calabi-Yau 
3-folds}
For a Calabi-Yau 3-fold $X$, the DT/PT 
correspondence is a formula which relates
rank one Donaldson-Thomas (DT) invariants 
counting 
one-dimensional closed subschemes in $X$ 
with Pandharipande-Thomas (PT) invariants 
counting stable pairs on $X$. 
The PT invariants provide a geometric description of the contribution of sheaf counting invariants in the 
Maulik--Nekrasov--Okounkov--Pandharipande conjecture \cite{MNOP}. 
More precisely, 
for a curve class $\beta \in H_2(X, \mathbb{Z})$ and $n \in \mathbb{Z}$, 
we denote by 
\begin{align}\label{IP}
    I_X(\beta, n), \ 
  (\mbox{resp.~} P_X(\beta, n))
    \end{align}
the moduli space of (compactly supported) closed
subschemes $C \subset X$ with $[C]=\beta$, 
$\chi(\mathcal{O}_C)=n$, 
(resp.~stable
pairs $(F, s)$ with $[F]=\beta$, $\chi(F)=n$). 
By taking the integration 
of the Behrend constructible functions on the spaces \eqref{IP}, 
we obtain the DT invariant
$\mathrm{DT}_{\beta, n} \in \mathbb{Z}$
and the PT invariant
$\mathrm{PT}_{\beta, n} \in \mathbb{Z}$. 
The DT/PT correspondence is the formula 
\begin{equation}\label{DTPT}
    \text{DT}_{\beta, n}=\sum_{k\geq 0}\text{DT}_{0, k}\cdot\text{PT}_{\beta, n-k}.
\end{equation}

The above formula was conjectured in~\cite{MR2545686}, 
its unweighted version 
was proved in~\cite{MR2669709, MR2869309}
using Joyce's motivic Hall algebra machinery~\cite{Joy1, Joy2, Joy3, Joy4}, and its version incorporating the Behrend function was proved in~\cite{Br} (also see~\cite{Thall})
using the work of 
Joyce-Song~\cite{JS}
(see also~\cite{K-S}).


The purpose of this paper is to prove a categorical version of \eqref{DTPT} for $X$ a local surface, see Theorem \ref{thm:main}. In \cite{T}, the second author defined dg-categories categorifying DT and PT invariants for local surfaces,
which are total spaces of canonical 
line bundles on surfaces. 
For reduced curve classes, we construct semiorthogonal decompositions of DT categories with summands which are products of quasi-BPS categories on points on $X$ and PT categories. By taking the Grothendieck groups of the categories appearing in these semiorthogonal decompositions, we obtain a K-theoretic version of \eqref{DTPT}.

\subsection{The categorical DT/PT correspondence}
Let $S$ be a smooth projective surface. The local surface is the 
following non-compact Calabi-Yau 3-fold
\begin{align*}
	X:= \mathrm{Tot}_S(K_S). 
	\end{align*}
Take $\beta \in H_2(S, \mathbb{Z})$
and $n \in \mathbb{Z}$. 
In \cite[Definition 4.2.1]{T}, the second author defined 
dg-categories 
\begin{equation}\label{def:dgcatDTPT}
\mathcal{DT}_X(\beta, n)\text{ and }  
		\mathcal{PT}_X(\beta, n).
		\end{equation}
These categories may be regarded as a gluing of 
categories of graded matrix factorizations of 
super-potentials whose critical loci are 
locally isomorphic to 
$I_X(\beta, n)$ and $P_X(\beta, n)$, 
respectively. 
The dg-categories (\ref{def:dgcatDTPT}) should 
recover DT/PT invariants by 
taking their periodic cyclic homologies, 
see~\cite[Theorem~1.1]{Eff}, \cite[Subsection 3.3]{T}, especially \cite[Conjecture 3.3.4]{T}.
The DT/PT categories \eqref{def:dgcatDTPT} 
are constructed as quotients of 
$D^b(\mathfrak{M}^{\dag})$, where $\mathfrak{M}^{\dag}$
is a certain derived open substack of 
the moduli stack of pairs $(F, s)$, where 
$F \in \Coh(S)$ is at most one-dimensional and 
$s \colon \mathcal{O}_S \to F$
is a section, by the subcategory of 
objects whose singular supports~\cite{AG} are 
contained in unstable loci. We will 
review their constructions in Subsections~\ref{subsec:ssupport}
and~\ref{sub:dtptdef}. 
	
	We say that $\beta$ is a reduced class 
	if any effective divisor on $S$ with class 
$\beta$ is a reduced divisor. 
In this case, the second author constructed 
a fully-faithful functor from the PT category to the
DT category, see~\cite[Theorem~1.4.5]{T}. 
	The main theorem of this paper is
	the following, which refines the 
	above fully-faithful functor to 
	a semiorthogonal decomposition: 
	\begin{thm}\label{thm:main}
	Suppose that $\beta$ is a reduced class. 
	Then there is a semiorthogonal decomposition
			\begin{align*}
			\mathcal{DT}_X(\beta, n)
			=
			\left\langle  \bigotimes_{i=1}^k\mathbb{T}_X(d_i)_{v_i}\otimes \mathcal{PT}_{X}(\beta, n') \,\Big|
			-1<\frac{v_1}{d_1}<\cdots<\frac{v_k}{d_k} \leq 0  \right\rangle,
		\end{align*}
where the right hand side is after all partitions $d_1+\cdots+d_k+n'=n$. 
		\end{thm}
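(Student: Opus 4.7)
The plan is to globalize the categorical DT/PT wall-crossing formula for quivers from our previous paper to the full moduli stack of pairs $\mathfrak{M}^\dag$, and then to descend the resulting semiorthogonal decomposition to the singular-support quotients that define $\mathcal{DT}_X(\beta,n)$ and $\mathcal{PT}_X(\beta,n)$. The reducedness hypothesis on $\beta$ is used to guarantee that along the intermediate walls between DT- and PT-stability, all destabilizing summands are purely zero-dimensional, so that the Ext-quiver governing the local wall-crossing is exactly the DT/PT quiver studied in loc.\ cit.

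First, I would identify the walls. Interpolating between DT- and PT-stability on pairs produces a finite chamber structure, and a polystable object on a wall decomposes as $(F^\circ,s)\oplus\bigoplus_i Q_i^{\oplus m_i}$ with $(F^\circ,s)$ PT-stable and the $Q_i$ zero-dimensional sheaves. Because $\beta$ is reduced, the pure one-dimensional part of any semistable pair is supported on a reduced curve and admits no destabilizing one-dimensional subsheaves, so the destabilizing factors are forced to be zero-dimensional. The Ext-quiver at such a polystable object then has one framing vertex for $(F^\circ,s)$ and vertices of dimensions $m_i$, which is precisely the shape of the DT/PT quiver.

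Next, I would apply the categorical DT/PT wall-crossing for the quiver to this local model, obtaining a semiorthogonal decomposition whose summands are tensor products of quasi-BPS point-categories with the framed PT piece, indexed by chains $-1<v_1/d_1<\cdots<v_k/d_k\leq 0$. Using the Koszul-type equivalence between a formal neighborhood of a polystable point in $\mathfrak{M}^\dag$ and the derived stack of representations of its Ext-quiver with cubic superpotential, I would transport this decomposition into $D^b(\mathfrak{M}^\dag)$ on an open neighborhood of the semistable locus and verify that the local pieces glue to a global semiorthogonal decomposition on $\mathfrak{M}^\dag$, iterating across the finitely many walls between DT- and PT-stability. Taking the quotient by the DT-unstable singular-support subcategory then yields $\mathcal{DT}_X(\beta,n)$ on the left, while on the right the framed block descends to $\mathcal{PT}_X(\beta,n')$ and each zero-dimensional factor descends, after absorbing the $(-1)$-shifted cotangent direction into $X=\mathrm{Tot}_S(K_S)$, to $\mathbb{T}_X(d_i)_{v_i}$. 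Summing over partitions $d_1+\cdots+d_k+n'=n$ gives the decomposition of Theorem~\ref{thm:main}.

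The main obstacle, I expect, will be the compatibility between the formal/Ext-quiver wall-crossing and the global singular-support quotient: one must match the DT-unstable singular-support subcategory on $\mathfrak{M}^\dag$, under the Koszul equivalence, with the ``quiver unstable'' subcategory appearing in the categorical wall-crossing for the DT/PT quiver, and check that the intermediate slope-stability weights on $\mathfrak{M}^\dag$ translate exactly into the quasi-BPS weight conditions $-1<v_i/d_i\leq 0$. A secondary technical issue is handling strictly semistable PT-pairs, which can occur for reduced but reducible $\beta$, and verifying that iterating the wall-by-wall decomposition produces no summands beyond those listed in the statement.
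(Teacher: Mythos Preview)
Your high-level strategy---identify the Ext-quiver at polystable points as a DT/PT quiver, apply the local categorical wall-crossing, and globalize---is the same as the paper's. But the mechanism you propose for passing from the local statement to the global one has a genuine gap, and the paper's actual route is different in a way that matters.

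The problem is your descent step. You propose to build a semiorthogonal decomposition on $D^b(\mathfrak{M}^\dag)$ and then quotient by the DT-unstable singular-support subcategory $\mathcal{C}_{\mathcal{Z}_I}$. But the PT category on the right is a quotient by a \emph{different} subcategory $\mathcal{C}_{\mathcal{Z}_P}$, and there is no reason a priori that quotienting the ambient SOD by $\mathcal{C}_{\mathcal{Z}_I}$ sends the framed summand to $\mathcal{PT}_X(\beta,n')$. Semiorthogonal decompositions do not descend through Verdier quotients unless the kernel is itself compatible with the decomposition, and here you would need to match two distinct unstable loci simultaneously. You flag this as an ``obstacle'' but do not propose a mechanism that resolves it.

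The paper avoids this entirely by working with window categories rather than quotients. Because $\beta$ is reduced, one can take $\mathfrak{M}^\dag_{\rm fin}=\mathfrak{T}_S(\beta,n)$, whose classical truncation admits a good moduli space $T_S(\beta,n)$. The window theorem then produces subcategories $\mathbb{W}_I,\mathbb{W}_P\subset D^b(\mathfrak{T}_S(\beta,n))$ with $\mathbb{W}_I\simeq\mathcal{DT}_X(\beta,n)$ and $\mathbb{W}_P\simeq\mathcal{PT}_X(\beta,n)$. The theorem is then proved as a single semiorthogonal decomposition of $\mathbb{W}_I$ whose summands are categorical Hall products of quasi-BPS categories with $\mathbb{W}_P$---there is no iteration over walls and no descent through a quotient. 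The reduction to formal fibers over $T_S(\beta,n)$ uses the existence of right adjoints (Lemma~\ref{lemma:adjoint}) and the formal-local characterization of the windows; the local statement is then exactly Theorem~\ref{thm:locDTPT} for the DT/PT quiver, after an explicit identification of the window generators (Lemma~\ref{lem:WIP}) and a bookkeeping check that the twist between the Porta--Sala Hall product and the quiver Hall product under Koszul duality converts the quiver weights $w_i$ into the intrinsic weights $v_i$ via \eqref{vwtransform}. Your ``secondary technical issue'' about strictly semistable PT-pairs does not arise: PT-stability has no strictly semistable objects, and the relevant polystable points are the closed points of $\mathcal{T}_S(\beta,n)$, handled uniformly by the formal-fiber argument.
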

		There is also a version for arbitrary classes $\beta$ and for the DT/PT categories of reduced supports 
		over $S$, see Remark \ref{remark:reducedcurve}.
In the above, $\mathbb{T}_X(d)_w$ is the quasi-BPS category of $d$ points and weight $w$ of $X$, see Subsection \ref{sub:quasiBPS} for more details. 

The main ingredient in the proof of Theorem \ref{thm:main} is the \textit{local categorical DT/PT correspondence} for DT/PT quivers proved in \cite[Theorem 1.1]{PT3}. 
The wall crossing between ideal sheaves and stable pairs (on an arbitrary Calabi-Yau $3$-fold) can be described using Ext quivers with super-potential \cite{Todstack}, and all these Ext quivers can be constructed from 
DT/PT quivers, see Subsection \ref{sub:DTPTquivers}. We also make use of the Koszul equivalence \cite{I} between coherent sheaves on a Koszul (quasi-smooth) stack and a category of matrix factorizations, the window categories of Segal \cite{MR2795327}, Halpern-Leistner \cite{halp}, and Ballard--Favero--Katzarkov \cite{MR3895631}, the noncommutative resolutions of \v{S}penko--Van den Bergh \cite{SVdB}, and categorical Hall algebras of surfaces \cite{PoSa} and of quivers with potential \cite{P0}.		

We expect that the semiorthogonal decomposition in 
Theorem~\ref{thm:main} 
also holds for an arbitrary class 
$\beta$. 
The reason we restrict to a reduced class is that, 
in this case, 
 one can take $\mathfrak{M}^{\dag}$
to be the derived moduli stack
$\mathfrak{T}_S(\beta, n)$
of pairs $(F, s)$
such that $s$ has at most zero-dimensional 
cokernel. Then its classical 
truncation $\mathcal{T}_S(\beta, n)$
admits a good moduli space 
\[\mathcal{T}_S(\beta, n)\to T_S(\beta, n),\]
and we can reduce the statement to be proved to the analogous formal 
local statements on $T_S(\beta, n)$. 
For an arbitrary class $\beta$, 
we cannot find such a $\mathfrak{M}^{\dag}$
admitting a good moduli space in general. 
Alternatively, 
we expect an 
argument reducing to a formal local statement on 
$T_X(\beta, n)$, but the local structure of 
the categories (\ref{def:dgcatDTPT}) over $T_X(\beta, n)$
is hard to investigate and
we can prove Theorem~\ref{thm:main} only for reduced 
classes at this moment. 

For $\beta=0$, we obtain the following semiorthogonal decomposition of the DT category of points on $X$ in quasi-BPS categories:

\begin{cor}\label{MacMahonsurface}
Let $n\in\mathbb{N}$. There is a semiorthogonal decomposition
			\begin{align}\label{SODMacMahonsurface}
			\mathcal{DT}_X(0, n)=
			\left\langle  \bigotimes_{i=1}^k\mathbb{T}_X(d_i)_{v_i} \,\Big|
			-1<\frac{v_1}{d_1}<\cdots<\frac{v_k}{d_k} \leq 0\right\rangle, 
		\end{align}
where the right hand side contains all partitions $d_1+\cdots+d_k=n$. 
\end{cor}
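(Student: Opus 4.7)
The plan is to derive Corollary \ref{MacMahonsurface} as the $\beta=0$ specialization of Theorem \ref{thm:main}. First I would verify that the class $\beta=0 \in H_2(S,\mathbb{Z})$ satisfies the reducedness hypothesis: an effective divisor on $S$ with class $0$ must be the zero divisor, which is reduced (vacuously, having no components), so Theorem \ref{thm:main} applies. Instantiating it at $\beta=0$ yields a semiorthogonal decomposition of $\mathcal{DT}_X(0,n)$ with building blocks indexed by tuples $(d_1,\dots,d_k;n')$ with $d_1+\cdots+d_k+n'=n$, and with tensor factors $\bigotimes_{i=1}^k \mathbb{T}_X(d_i)_{v_i} \otimes \mathcal{PT}_X(0,n')$.

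The next step is to identify the PT factor at $\beta=0$. Unwinding the construction of \cite[Definition 4.2.1]{T}, the category $\mathcal{PT}_X(0,n')$ is built from a derived open substack of the moduli of pairs $(F,s)$ with $F$ a pure one-dimensional sheaf on $X$ of class $\beta$ and $s\colon \mathcal{O}_X\to F$ with zero-dimensional cokernel. Since no nonzero pure one-dimensional sheaf has class $0$, the only pair in class $\beta=0$ is the trivial pair $(F,s)=(0,0)$, which has $\chi(F)=0$. Consequently $\mathcal{PT}_X(0,0)$ is (equivalent to) the trivial dg-category $D^b(\mathrm{pt})$, acting as a unit for the tensor product, while $\mathcal{PT}_X(0,n')=0$ for $n'>0$.

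Substituting this into the semiorthogonal decomposition of Theorem \ref{thm:main}, all indexing tuples with $n'>0$ contribute zero summands and disappear, so only the contributions with $n'=0$ remain. These are indexed exactly by partitions $d_1+\cdots+d_k=n$ (including the empty partition when $n=0$), and the tensor factor $\mathcal{PT}_X(0,0)$ drops out, leaving $\bigotimes_{i=1}^k \mathbb{T}_X(d_i)_{v_i}$, in perfect agreement with \eqref{SODMacMahonsurface}.

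The one step that genuinely requires care, and which I expect to be the main obstacle, is the identification $\mathcal{PT}_X(0,0)\simeq D^b(\mathrm{pt})$ and $\mathcal{PT}_X(0,n')=0$ for $n'>0$. This requires tracing through the definition of the PT category as a quotient by singular support, and confirming that the singular support quotient of a derived stack supported at a single point gives the trivial dg-category (respectively the zero category). Once this is in place, the corollary reduces to a bookkeeping of the partitions in Theorem \ref{thm:main}, with no further wall-crossing input needed.
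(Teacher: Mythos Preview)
Your approach is correct and matches the paper's own proof, which is literally the single line ``The claim follows from Theorem~\ref{thm:main} for $\beta=0$.'' The identification you flag as the main obstacle is in fact immediate: for $\beta=0$ the moduli stack $\mathfrak{T}_S(0,n')$ parametrizes pairs $(F,s)$ with $F$ zero-dimensional of length $n'$, the PT-stable locus $P_X(0,n')$ is empty for $n'>0$ (no nonzero compactly supported pure one-dimensional sheaf on $X$ has $\tau_*[F]=0$, since such a sheaf would have compact support inside a union of fibers $\mathbb{A}^1$) and is a point for $n'=0$, so the singular support quotient gives $\mathcal{PT}_X(0,n')=0$ for $n'>0$ and $\mathcal{PT}_X(0,0)\simeq D^b(\mathrm{pt})$ as you anticipated.
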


We proved Corollary \ref{MacMahonsurface} for $\mathbb{C}^3$ in \cite[Theorem 1.1]{PT0}. We explain in Subsection \ref{sub:quasiBPS} that Corollary \ref{MacMahonsurface} is a categorical analogue of the computation of DT invariants of points on a $3$-fold of Behrend--Fantechi \cite{BF}. 
By taking the Grothendieck groups of the categories in Theorem \ref{thm:main} and Corollary \ref{MacMahonsurface}, we obtain an analogue of \eqref{DTPT} in K-theory: 

\begin{cor}\label{cor410}
There is an isomorphism
\begin{equation}\label{eq:cor410}
K(\mathcal{DT}_X(\beta, n))\cong \bigoplus_{n'\leq n}K\left(\mathcal{DT}_{X}(0, n-n')\otimes \mathcal{PT}_{X}(\beta, n')\right).
\end{equation}
\end{cor}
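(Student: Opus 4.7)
\textbf{Proof plan for Corollary~\ref{cor410}.} The statement follows formally by applying the K-theory functor to Theorem~\ref{thm:main} and Corollary~\ref{MacMahonsurface}. I would invoke two standard facts about Waldhausen K-theory of small pretriangulated dg-categories: (i) $K$ is additive on semiorthogonal decompositions, so $K(\langle\mathcal{C}_i\rangle)\cong\bigoplus_i K(\mathcal{C}_i)$; and (ii) if $\mathcal{C}=\langle\mathcal{C}_i\rangle$ is a semiorthogonal decomposition and $\mathcal{D}$ is any fixed dg-category, then $\mathcal{C}\otimes\mathcal{D}=\langle\mathcal{C}_i\otimes\mathcal{D}\rangle$ is again a semiorthogonal decomposition, as the semiorthogonality and generation conditions propagate through the tensor product with $\mathcal{D}$.

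First, apply $K(-)$ to the semiorthogonal decomposition of Theorem~\ref{thm:main}. By (i),
\begin{equation*}
K(\mathcal{DT}_X(\beta,n))\cong\bigoplus K\!\left(\bigotimes_{i=1}^k\mathbb{T}_X(d_i)_{v_i}\otimes\mathcal{PT}_X(\beta,n')\right),
\end{equation*}
where the sum is over tuples $(k,\{(d_i,v_i)\}_{i=1}^k,n')$ with $d_1+\cdots+d_k+n'=n$ and $-1<v_1/d_1<\cdots<v_k/d_k\leq 0$. I would then reindex this direct sum by first fixing $n'\leq n$ and then summing over partitions of $n-n'$ with the slope condition.

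For each fixed $n'\leq n$, Corollary~\ref{MacMahonsurface} applied with $n$ replaced by $n-n'$ yields a semiorthogonal decomposition of $\mathcal{DT}_X(0,n-n')$ indexed by precisely these partitions of $n-n'$. Tensoring this semiorthogonal decomposition on the right with the fixed dg-category $\mathcal{PT}_X(\beta,n')$, by (ii), produces a semiorthogonal decomposition of $\mathcal{DT}_X(0,n-n')\otimes\mathcal{PT}_X(\beta,n')$ whose components are exactly the $\bigotimes_{i=1}^k\mathbb{T}_X(d_i)_{v_i}\otimes\mathcal{PT}_X(\beta,n')$ appearing above. Applying (i) once more identifies the inner partition sum with $K(\mathcal{DT}_X(0,n-n')\otimes\mathcal{PT}_X(\beta,n'))$, and assembling over $n'\leq n$ gives~\eqref{eq:cor410}.

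There is no serious obstacle here, since the argument is a reorganization of the two semiorthogonal decompositions already established; in particular, no K\"unneth-type comparison between $K(\mathcal{A}\otimes\mathcal{B})$ and $K(\mathcal{A})\otimes K(\mathcal{B})$ is needed, because \eqref{eq:cor410} keeps $K(\mathcal{DT}_X(0,n-n')\otimes\mathcal{PT}_X(\beta,n'))$ as a single summand. If any step requires care, it is simply checking that the dg-categories in question are small and pretriangulated in the sense required by Waldhausen K-theory, which follows from their definition as Verdier quotients of $D^b(\mathfrak{M}^\dag)$.
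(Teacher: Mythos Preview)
Your proposal is correct and follows essentially the same approach as the paper: apply $K(-)$ to the semiorthogonal decomposition of Theorem~\ref{thm:main}, then for each fixed $n'$ tensor the semiorthogonal decomposition of Corollary~\ref{MacMahonsurface} (with $n$ replaced by $n-n'$) by $\mathcal{PT}_X(\beta,n')$ and take $K(-)$ again to regroup the summands. Your remarks that no K\"unneth formula is needed and that only additivity of $K$ on semiorthogonal decompositions is used are exactly in line with how the paper proceeds.
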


\subsection{Quasi-BPS categories}\label{sub:quasiBPS}
Let $S$ be a smooth (not necessarily projective) surface.
For the local surface $X=\mathrm{Tot}_S(K_S)$, the first author defined quasi-BPS categories $\mathbb{T}_X(d)_w$ of $d$ points and  weight $w$ in \cite{P2}. We briefly explain the relation of these categories to (numerical, cohomological) BPS (also called Gopakumar-Vafa) invariants.

There are semiorthogonal decompositions of the Hall algebra of points on a surface of Porta--Sala \cite{PoSa} with summands which are products of quasi-BPS categories \cite[Theorem~1.1]{P2}. This is a categorical analogue of the Davison--Meinhardt PBW theorem for the cohomological Hall algebra (CoHA) of a quiver with potential \cite{DM} which says that the CoHA is generated by the (cohomological) BPS spaces of the quiver with potential. 

The numerical BPS invariants are expected to be fundamental enumerative invariants which determine DT/PT invariants and Gromow-Witten invariants \cite[Section 2 and a half]{MR3221298}.
The BPS invariants of $d$ points on $X$ are given by 
\begin{equation}\label{bpsX}
    \Omega_{X, d}=-\chi_c(S),
\end{equation}
where $\chi_c(S)$ is the Euler characteristic of the compactly supported cohomology of $S$.
It is an interesting problem to compute the K-theory of quasi-BPS categories.
In \cite{PT0}, we discussed two K-theoretic computations for quasi-BPS categories of $\mathbb{C}^3$: of localized equivariant K-theory (with respect to the two dimensional Calabi-Yau torus of $\mathbb{C}^3$) and of topological K-theory. 

Assume $S$ is a toric surface. Let $T$ be the two dimensional torus acting on $S$. Let $\mathbb{K}:=K_0(BT)$, $\mathbb{F}:=\mathrm{Frac}\,\mathbb{K}$, and $K_T\left(\mathbb{T}_X(d)_w\right)_\mathbb{F}:=K_T\left(\mathbb{T}_X(d)_w\right)\otimes_\mathbb{K}\mathbb{F}$. The following is the analogue of \cite[Theorem 4.12]{PT0}:

\begin{thm}\label{thmtoric}
Let $S$ be a toric surface, let $X=\mathrm{Tot}_S(K_S)$, let $(d, w)\in\mathbb{N}\times\mathbb{Z}$, and let $n=\gcd(d, w)$. Then
\[\dim_\mathbb{F} K_T\left(\mathbb{T}_X(d)_w\right)_\mathbb{F}=\chi_c(\mathrm{Hilb}(S, n)).\]
\end{thm}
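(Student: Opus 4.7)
The plan is to reduce the toric surface case to the $\mathbb{C}^3$ case established in \cite[Theorem 4.12]{PT0} by equivariant localization along $T$, and then to identify the resulting sum with $\chi_c(\Hilb(S,n))$ through the toric description of its $T$-fixed points. First I would apply a Thomason-type localization theorem to the $T$-equivariant dg-category $\mathbb{T}_X(d)_w$: after tensoring with $\mathbb{F}$, the $T$-equivariant K-theory is supported on the $T$-fixed locus of the ambient derived moduli stack. Since $T$ acts on $S$ with isolated fixed points $\{p_1,\dots,p_r\}$ and the fiber of $K_S$ at each $p_i$ carries a non-trivial character, the $T$-fixed points of $X=\mathrm{Tot}_S(K_S)$ coincide with $S^T$.

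Around each $p_i$ the formal neighborhood $\widehat{X}_{p_i}$ is $T$-equivariantly isomorphic to $\widehat{\mathbb{C}^3}$, so zero-dimensional sheaves on $X$ supported at $S^T$ split as tuples of zero-dimensional sheaves on these $\mathbb{C}^3$ charts. The $T$-fixed part of the ambient stack therefore decomposes into pieces indexed by tuples $(d_i,w_i)_{i=1}^{r}$ with $\sum_i d_i=d$ and $\sum_i w_i=w$, each piece modeled on the $\mathbb{C}^3$-quasi-BPS category $\mathbb{T}(d_i)_{w_i}$. The window/weight condition defining $\mathbb{T}_X(d)_w$ is then used to select which tuples actually contribute: I expect that only slope-aligned tuples of the form $(d_i,w_i)=m_i(d/n,w/n)$ with $m_i\in\mathbb{Z}_{\geq 0}$ and $\sum_i m_i=n$ survive. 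Proving this rigorously is the heart of the argument, and it should follow by analyzing how the magic window for $\GL_d$ restricts along Levi subgroups $\prod_i \GL_{d_i}\subset \GL_d$, in parallel with the internal combinatorics used in the proof of \cite[Theorem 4.12]{PT0}.

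For each admissible tuple one has $\gcd(d_i,w_i)=m_i$, so \cite[Theorem 4.12]{PT0} applied at each $p_i$ yields $\dim_{\mathbb{F}} K_T(\mathbb{T}(d_i)_{w_i})_{\mathbb{F}}=p(m_i)$, where $p$ denotes the partition function. Multiplying across the $r$ fixed points and summing over all tuples with $\sum_i m_i=n$ then gives
\[\dim_{\mathbb{F}} K_T(\mathbb{T}_X(d)_w)_{\mathbb{F}}=\sum_{m_1+\cdots+m_r=n}\prod_{i=1}^{r}p(m_i).\]
The right-hand side is exactly the count of $T$-fixed points of $\Hilb(S,n)$: a $T$-fixed monomial ideal distributes $n$ boxes among the $r$ affine charts at the $p_i$, each local contribution being a partition of some $m_i$. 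Hence it equals $\chi_c(\Hilb(S,n))$ by toric localization of the compactly supported Euler characteristic.

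The main obstacle is the slope-alignment assertion in the second paragraph: among all distributions $(d_i,w_i)$ summing to $(d,w)$, one must show that only the slope-aligned ones $m_i(d/n,w/n)$ genuinely contribute to $\mathbb{T}_X(d)_w$ after equivariant localization. Every other step is a formal combination of Thomason-type localization, the local toric structure of $X$, and the $\mathbb{C}^3$ computation of \cite{PT0}.
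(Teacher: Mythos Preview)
Your overall strategy and the final combinatorics are correct, and your identification of the slope-alignment condition as the key point is spot on: this is exactly what the paper records in Remark~\ref{rmk:product}, where the formal-local quasi-BPS category at $p=\sum_j d^{(j)}x^{(j)}$ factors as $\boxtimes_j \mathbb{T}_X(d^{(j)})_{w^{(j)},p^{(j)}}$ with $w^{(j)}/d^{(j)}=w/d$. However, the mechanism you propose to reach that point has a genuine gap. Thomason localization is a statement about equivariant $G$-theory of a stack, not about $K_T$ of an abstract dg-subcategory defined by window conditions. There is no off-the-shelf theorem letting you localize $K_T(\mathbb{T}_X(d)_w)_{\mathbb{F}}$ directly to $T$-fixed loci, and even granting that, you would still need a K\"unneth isomorphism for $K_T$ of box products of quasi-BPS categories, which the paper explicitly flags as non-automatic (see the discussion around~\eqref{kunnethchi} and Remark~\ref{sub:catMM}).

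The paper circumvents both issues by working one level up, in the Hall algebra. It applies localization to $G_T(\mathcal{M}_S(d))_{\mathbb{F}}$ itself (Lemma~\ref{lem:KHA}), which is legitimate, and then constructs explicit objects $\Phi(d)_{v,x}\in\mathbb{T}_X(d)_v$ supported at single fixed points $x\in S^T$. By transporting the $\mathbb{C}^2$ computation through the formal-local isomorphism $\mathrm{KHA}_{S,T,x}\cong\mathrm{KHA}_{\mathbb{C}^2,T}$, Proposition~\ref{prop38} shows that ordered monomials in these $\Phi$'s give an $\mathbb{F}$-basis of the whole Hall algebra. The $T$-equivariant semiorthogonal decomposition of Theorem~\ref{SODsurface} then \emph{extracts} the quasi-BPS summand $K_T(\mathbb{T}_X(d)_w)_{\mathbb{F}}$ as the span of the slope-aligned monomials (Corollary~\ref{cor39}); this is how the paper proves the slope-alignment you conjectured, and simultaneously avoids needing K\"unneth. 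The final generating-function step matches yours. So your proposal is morally the same computation read backwards, but to make it rigorous you would end up reproducing the paper's Hall-algebra and explicit-generator machinery; the direct localization shortcut does not exist as stated.
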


We next discuss topological K-theoretic computations.
For a dg-category $\mathcal{D}$, consider its topological K-theory $K_i^{\rm{top}}(\mathcal{D})$
defined by Blanc \cite{Blanc}, which
is two-periodic by its construction~\cite[Definition 1.6]{Blanc}.
Let \[\chi_K(\mathbb{T}_X(d)_w):=\dim_{\mathbb{Q}} K_0^{\mathrm{top}}(\mathbb{T}_X(d)_w)_{\mathbb{Q}}-\dim_{\mathbb{Q}} K_1^{\mathrm{top}}(\mathbb{T}_X(d)_w)_{\mathbb{Q}}.\]
We propose the following K-theoretic computations, see \cite[Corollary 5.13]{PT0} for the case of $\mathbb{C}^3$.

\begin{conj}\label{mainconj}
Let $S$ be a smooth surface, let $X=\mathrm{Tot}_S(K_S)$,
let $(d, w)\in \mathbb{N}\times\mathbb{Z}$, and let $n=\gcd(d, w)$. Then 
\[\chi_K(\mathbb{T}_X(d)_w)=\chi_c(\mathrm{Hilb}(S, n)).\]
\end{conj}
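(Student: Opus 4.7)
The strategy is to deduce Conjecture \ref{mainconj} from the known $\mathbb{C}^3$-case \cite[Corollary 5.13]{PT0} by combining the semiorthogonal decomposition of Corollary \ref{MacMahonsurface} with a global Euler-characteristic calculation of $\chi_K(\mathcal{DT}_X(0,n))$. First, applying Blanc topological K-theory and using its additivity under semiorthogonal decompositions of dg-categories, Corollary \ref{MacMahonsurface} yields the multiplicative identity
\begin{equation*}
\sum_{n\geq 0}\chi_K(\mathcal{DT}_X(0,n))\,q^n \;=\; \prod_{\substack{-1<v/d\leq 0 \\ \gcd(d,v)=1}} \;\sum_{m\geq 0} \chi_K(\mathbb{T}_X(md)_{mv})\, q^{md}.
\end{equation*}
Assuming the conjecture, each inner sum equals $\sum_{m\geq 0}\chi_c(\mathrm{Hilb}(S,m))\,q^{md}$, and a combinatorial rearrangement parallel to the one used in \cite[Section 5]{PT0}, combined with G\"ottsche's formula, collapses the right-hand side to $\prod_{k\geq 1}(1-q^k)^{-k\,\chi_c(S)}$. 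Thus Conjecture \ref{mainconj} reduces to verifying
\begin{equation*}
\sum_{n\geq 0}\chi_K(\mathcal{DT}_X(0,n))\,q^n \;=\; \prod_{k\geq 1}(1-q^k)^{-k\,\chi_c(S)}
\end{equation*}
and then inverting the product coefficient by coefficient to extract the values $\chi_K(\mathbb{T}_X(d)_w)$.

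The second, and main, step is to compute $\chi_K(\mathcal{DT}_X(0,n))$ globally. The DT category is built, via the Koszul equivalence of \cite{I}, from a category of graded matrix factorizations whose critical locus is locally the Hilbert scheme $\mathrm{Hilb}^n(X)$, so one expects $\chi_K(\mathcal{DT}_X(0,n))$ to agree with $\chi_c(\mathrm{Hilb}^n(X))$ up to the $\mathbb{Z}/2$-grading sign. Granting this identification, the Behrend--Fantechi MacMahon formula $\sum_n \chi_c(\mathrm{Hilb}^n(X))\,q^n = M(q)^{\chi_c(X)}$ together with $\chi_c(X)=\chi_c(S)$ (since $X\to S$ is an affine line bundle) produces precisely the product $M(q)^{\chi_c(S)}$ required in Step 1. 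Extracting $\chi_K(\mathbb{T}_X(d)_w) = \chi_c(\mathrm{Hilb}(S,\gcd(d,w)))$ then proceeds by the same multiplicative/combinatorial inversion that was used for $\mathbb{C}^3$ in \cite[Section 5]{PT0}.

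The hardest step will be the identification $\chi_K(\mathcal{DT}_X(0,n)) = \chi_c(\mathrm{Hilb}^n(X))$ for general (non-toric) $S$: Blanc topological K-theory of a singular matrix-factorization dg-category is not obviously computable from the compactly supported Euler characteristic of its critical locus. When $S$ is toric, one can attack this by combining Theorem \ref{thmtoric} with an Atiyah--Segal completion-type comparison between equivariant and Blanc topological K-theory, exactly as in the $\mathbb{C}^3$ analysis of \cite{PT0}; the two sides of Theorem \ref{thmtoric} then give the desired identification at the level of fixed points and Euler characteristics. For a general smooth surface $S$, the natural route is to establish a Mayer--Vietoris (or \'etale-descent) property for $\chi_K$ of the DT category along a cover of $S$ by toric Zariski opens, thereby reducing the global computation to the toric case handled above. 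Proving such a descent statement for Blanc topological K-theory of the DT dg-category, which is defined as a quotient of $D^b(\mathfrak{M}^{\dag})$ by a singular-support subcategory, is the principal technical obstacle I anticipate.
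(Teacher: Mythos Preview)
The statement you are attempting to prove is labeled a \emph{Conjecture} in the paper, and the paper does not prove it; indeed, in Subsection~\ref{sub:KDT} the authors explicitly \emph{assume} Conjecture~\ref{mainconj} together with the K\"unneth formula~\eqref{kunnethchi} as hypotheses in order to derive the generating-function identity~\eqref{wallcrossingDT2}. So there is no proof in the paper to compare against, and the task is to assess whether your outline could plausibly close the gap.

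Your argument has a structural circularity that cannot be repaired by the methods you describe. The passage from Corollary~\ref{MacMahonsurface} to the product formula you display already requires the K\"unneth identity~\eqref{kunnethchi}, which is an independent unproved hypothesis. More seriously, even granting K\"unneth and granting the identification $\chi_K(\mathcal{DT}_X(0,n))=\chi_c(\mathrm{Hilb}^n(X))$, the ``inversion'' step fails: the product formula determines one number per degree $n$, but the unknowns $\chi_K(\mathbb{T}_X(md)_{mv})$ are indexed by a primitive slope $(d,v)$ \emph{and} a multiplicity $m$, so the system is vastly underdetermined. In the $\mathbb{C}^3$ case of \cite{PT0} this is resolved not by generating-function inversion but by exhibiting an explicit $\mathbb{F}$-basis of equivariant K-theory (via shuffle-algebra methods) and counting it directly; that is what pins down each $\chi_K(\mathbb{T}(d)_w)$ individually. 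Your plan contains no analogue of this step for general $S$, and without it you cannot conclude that $\chi_K(\mathbb{T}_X(d)_w)$ depends only on $\gcd(d,w)$, which is precisely the content of the conjecture.

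Finally, the identification $\chi_K(\mathcal{DT}_X(0,n))=\chi_c(\mathrm{Hilb}^n(X))$ that you treat as the ``main step'' is itself essentially an open problem: it is a topological-K-theory avatar of \cite[Conjecture~3.3.4]{T}, and neither Atiyah--Segal completion nor Mayer--Vietoris for Blanc K-theory of singular-support quotient categories is currently available in the form you would need.
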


When $\gcd(d, w)=1$, we regard $\mathbb{T}_X(d)_w$ as categorifications of the BPS invariants $\Omega_{X,d}$, for example Conjecture \ref{mainconj} implies that $\chi_K(\mathbb{T}_X(d)_w)=|\Omega_{X,d}|$. 
Further, when $\gcd(d, w)=1$, we show (using \cite[Theorem 1.1]{PT1}) that objects in $\mathbb{T}_X(d)_w$ are supported over the small diagonal $X\hookrightarrow \mathrm{Sym}^d(X)$, which is a categorical analogue of Davison's support lemma \cite[Lemma 4.2, Theorem 5.1]{Dav}. 

For $\gcd(d,w)>1$, the Euler characteristic of the topological K-theory of $\mathbb{T}_X(d)_w$ may be different from $|\Omega_{X,d}|$.
In \cite{PT1}, we constructed a coproduct on the K-theory of quasi-BPS categories of $\mathbb{C}^3$ of fixed slope and we used it to define a K-theoretic version of BPS invariants for all $(d,w)\in\mathbb{N}\times\mathbb{Z}$. We do not know if this is possible for a general local surface $X$.

Let $(d, w)\in\mathbb{N}\times\mathbb{Z}$ with $\gcd(d, w)=n$.
Similarly to our expectations for $S=\mathbb{C}^2$ from \cite{PT0}, \cite{PT1}, we believe the category $\mathbb{T}_X(d)_w$ is related to the category $\mathrm{MF}^{\mathrm{gr}}(X^{\times d}/\mathfrak{S}_d, 0)$ of zero matrix factorizations on the orbifold $X^{\times d}/\mathfrak{S}_d$. 

\subsection{Computations of K-theoretic DT invariants via quasi-BPS categories}\label{sub:KDT}

We revisit the computation of DT invariants from BPS invariants. 
By \cite{BF}, 
there is an equality
\begin{align}\label{wallcrossingDT}
	\sum_{d \geq 0} \mathrm{DT}_{X,d}\cdot q^d=
	\prod_{d\geq 1}(1-(-q)^d)^{d \Omega_{X, d}}=
	\prod_{d\geq 1}(1-(-q)^d)^{-d\chi_c(S)}.
	\end{align}
	
In the spirit of \cite{PT0},  \cite[Subsection 5.5]{PT1}, we regard Corollary \ref{MacMahonsurface} as a (partial) categorical analogue of the above formula.
To obtain a K-theoretic version of \eqref{wallcrossingDT} from Theorem \ref{MacMahonsurface}, one needs a K-theoretic computation of quasi-BPS categories as in Conjecture \ref{mainconj}. More precisely, assuming Conjecture \ref{mainconj} and the following Künneth formula \begin{equation}\label{kunnethchi}
    \chi_K\left(\bigotimes_{i=1}^k \mathbb{T}_X(d_i)_{v_i}\right)=\prod_{i=1}^k \chi_K(\mathbb{T}_X(d_i)_{v_i}),\end{equation} 
    we obtain from Corollary~\ref{MacMahonsurface} the following computation of the Euler characteristic of the topological K-theory of DT categories of points on a local surface:
    \begin{equation}\label{wallcrossingDT2}
    \sum_{d \geq 0} \chi_K(\mathcal{DT}_X(0, d))\cdot q^d=
	\prod_{d\geq 1}(1-q^d)^{-d\chi_c(S)},
\end{equation}
see for example \cite[Corollary 4.13]{PT0}, \cite[Subsection 5.5]{PT1}. We discuss an analogous statement for the localized equivariant K-theory of toric surfaces in Remark \ref{sub:catMM}.

\subsection{Acknowledgements}
	Y.~T.~is supported by World Premier International Research Center
	Initiative (WPI initiative), MEXT, Japan, and Grant-in Aid for Scientific
	Research grant (No.~19H01779) from MEXT, Japan.
	
\section{Preliminaries}

\subsection{Notations}\label{notation}

All the spaces $\mathcal{X}$ considered are quasi-smooth (derived) stacks over $\mathbb{C}$, see Subsection \ref{sec:quasismooth}. 
Its classical truncation is denoted by $t_0(\mathcal{X})$. 
We denote by $\mathbb{L}_\mathcal{X}$ the cotangent complex of 
$\mathcal{X}$. 
For a quasi-projective scheme $X$ over $\mathbb{C}$, we denote by $\chi_c(X)$
the Euler characteristic of its compactly supported cohomology. 

In this paper, any dg-category considered is a $\mathbb{C}$-linear 
pre-triangulated dg-category, in particular its homotopy category is a 
triangulated category. 
For a dg-category $\mathscr{D}$, we denote by $K(\mathscr{D})$ the Grothendieck group of the homotopy category of $\mathscr{D}$. Let $\mathcal{X}$ be a stack with an action of an abelian group $T$. We denote by $G_T(\X)$ the Grothendieck group of the derived category of bounded complexes of $T$-equivariant coherent sheaves $D^b_T(\X)$. We denote by $K_T(\X)$ the Grothendieck group of the category of perfect complexes $\mathrm{Perf}_T(\X)\subset D^b_T(\X)$. If $T$ is trivial, we drop it from the notation. 
We let $\mathbb{K}:=K_0(BT)$ and $\mathbb{F}:=\mathrm{Frac}\,\mathbb{K}$. If $V$ is a $\mathbb{K}$-module, we let $V_\mathbb{F}:=V\otimes_\mathbb{K}\mathbb{F}$. 

For $G$ a reductive group and $X$ a dg-scheme with an action of $G$, denote by $X/G$ the corresponding
quotient stack. 
When $X$ is affine, we denote by $X\ssslash G$ the quotient dg-scheme with dg-ring of regular functions $\mathcal{O}_X^G$. 
For a morphism $f \colon X\to Y$ and for a closed point $y \in Y$, 
we denote by $\widehat{X}_y$ the formal fiber of $f$ at $y$, i.e. 
\begin{align*}
\widehat{X}_y := X \times_{Y} \Spec \left(\widehat{\mathcal{O}}_{Y, y}\right).
\end{align*}
When $X$ is a $G$-representation, $f\colon X\to Y:=X\ssslash G$, and $y=0$, we omit the subscript $y$ from the above notation. 





Let $R$ be a set. Consider a set $O\subset R\times R$ such that for any $i, j\in R$ we have $(i,j)\in O$, or $(j,i)\in O$, or both $(i,j)\in O$ and $(j,i)\in O$. 
Let $\mathbb{T}$ be a pre-triangulated dg-category. We will construct semiorthogonal decompositions
\[\mathbb{T}=\langle \mathbb{A}_i \mid i \in R \rangle\] with summands 
pre-triangulated subcategories $\mathbb{A}_i$ indexed by $i\in R$
such that for any $i,j\in R$ with $(i, j)\in O$ and for any objects $\mathcal{A}_i\in\mathbb{A}_i$, $\mathcal{A}_j\in\mathbb{A}_j$, we have 
$\Hom_{\mathbb{T}}(\mathcal{A}_i,\mathcal{A}_j)=0$.

\subsection{Matrix factorizations}

Reference for this subsection are \cite[Section 2.2]{T3}, \cite[Subsection~2.2]{T4}, \cite[Section~2.3]{MR3895631}, \cite[Section~1]{PoVa3}.

\subsubsection{The definition of categories of matrix factorizations}\label{sub:ungradedMF} Let $G$ be a reductive group and let $Y$ be a smooth affine scheme with an action of $G$
and a trivial $\mathbb{Z}/2$-action.
Let 
$\mathcal{Y}=Y/G$ be the corresponding quotient stack
and let $f$ be
a regular function 
\[f \colon \mathcal{Y}\to \mathbb{C}.\]
We define the dg-category of matrix factorizations $\mathrm{MF}(\mathcal{Y}, f)$. 
Its objects are $(\mathbb{Z}/2\mathbb{Z})\times G$-equivariant factorizations $(P, d_P)$, where $P$ is a $G$-equivariant coherent sheaf on $Y$, 
$\langle 1\rangle$ is the twist corresponding to a non-trivial $\mathbb{Z}/2\mathbb{Z}$-character on $Y$, and \[d_P \colon  P\to P\langle 1\rangle\] is a morphism satisfying $d_P\circ d_P=f$. Alternatively, the objects of $\mathrm{MF}(\mathcal{Y}, f)$ are tuples
\begin{align}\label{MF:obj}
\left(\alpha \colon E\rightleftarrows F \colon  \beta\right),
\end{align}
where $E$ and $F$ are $ G$-equivariant coherent sheaves
on $Y$, and $\alpha$ and $\beta$ are $G$-equivariant morphisms
such that $\alpha\circ\beta$ and $\beta\circ\alpha$ are multiplication by $f$.
See~\cite[Section~2.3]{MR3895631}, \cite[Subsection~2.2]{T4}, \cite[Definition 1.2]{PoVa3} for the definition of the morphism spaces.

For a dg-subcategory $\mathscr{A}$ of $D^b(\mathcal{Y})$, define $\mathrm{MF}(\mathscr{A}, f)$ as the full subcategory of $\mathrm{MF}(\mathcal{Y}, f)$ with objects pairs $(P, d_P)$ with $P$ in $\mathscr{A}$. 

\subsubsection{Graded matrix factorizations}\label{gradedMFdef}
Assume there exists an extra action of $\mathbb{C}^*$ on $Y$ which commutes with the action of $G$ on $Y$, 
and trivial on $\mathbb{Z}/2 \subset \mathbb{C}^{\ast}$.
Assume that $f$ is weight two with respect to 
the above $\mathbb{C}^{\ast}$-action. 
Denote by $(1)$ the twist by the character \[\mathrm{pr}_2 \colon G\times\mathbb{C}^*\to\mathbb{C}^*.\]
Consider the category of graded matrix factorizations $\mathrm{MF}^{\mathrm{gr}}(\mathcal{Y}, f)$. Its objects are pairs $(P, d_P)$ with $P$ an equivariant $G\times\mathbb{C}^*$-sheaf on $Y$ and $d_P \colon P\to P(1)$ a $G\times\mathbb{C}^*$-equivariant morphism. 
Note that
as the $\mathbb{C}^{\ast}$-action is trivial on $\mathbb{Z}/2$, 
we have the induced action of
$\mathbb{C}^{\star}=\mathbb{C}^{\ast}/(\mathbb{Z}/2)$ on $Y$
and $f$ is weight one with respect to the above $\mathbb{C}^{\star}$-action. 
The objects of $\mathrm{MF}^{\mathrm{gr}}(\mathcal{Y}, f)$ can be alternatively described as tuples 
\begin{align}\label{tuplet:graded}
(E, F, \alpha \colon 
E\to F(1)', \beta \colon F\to E),
\end{align}
where $E$ and $F$ are $G\times\mathbb{C}^{\star}$-equivariant coherent sheaves
on $Y$, $(1)'$ is the twist by the character 
$G \times \mathbb{C}^{\star} \to \mathbb{C}^{\star}$, 
and $\alpha$ and $\beta$ are $\mathbb{C}^{\star}$-equivariant morphisms
such that $\alpha\circ\beta$ and $\beta\circ\alpha$ are multiplication by $f$.

Functoriality of categories of (graded or ungraded) matrix factorizations for pullback and proper pushfoward is discussed in \cite{PoVa3}.

\subsection{The Koszul equivalence}

Let $Y$ be a smooth affine scheme with an action of a reductive group $G$, let $\mathcal{Y}=Y/G$, and let $V$ be a $G$-equivariant vector bundle on $Y$. Let $\mathbb{C}^*$ act on the fibers of $V$ with weight $2$ and consider $s\in \Gamma(Y, V)$ a section of $V$ of $\mathbb{C}^*$-weight $2$.
It induces a map $\partial \colon V^{\vee} \to \mathcal{O}_Y$. 
Let $s^{-1}(0)$ be the derived zero locus of $s$ with ring of regular functions
\begin{align}\label{Kscheme}
\mathcal{O}_{s^{-1}(0)}:=\mathcal{O}_Y\left[V^{\vee}[1];\partial\right].
\end{align}
Consider the quotient 
\begin{equation}\label{koszuldef}
\mathscr{P}:=s^{-1}(0)/G.
\end{equation}
We call $\mathscr{P}$ the \textit{Koszul stack} associated with 
$(Y, V, s, G)$. 
We denote by $D^b(\mathscr{P})$ the 
derived category of $G$-equivariant 
dg-modules over $\mathcal{O}_{s^{-1}(0)}$
with bounded coherent cohomologies. 

The section $s$ also induces the regular function \begin{equation}\label{defreg}
f\colon \mathscr{V}^{\vee}:=\text{Tot}_Y\left(V^{\vee}\right)/G\to\mathbb{C}
\end{equation}
defined by
$f(y,v)=\langle s(y), v \rangle$ for $y\in Y(\mathbb{C})$ and $v\in V^{\vee}|_y$.
Consider the category of graded matrix factorizations $\text{MF}^{\text{gr}}\left(\mathscr{V}^{\vee}, f\right)$ with respect to the group $\mathbb{C}^*$ mentioned above. The Koszul duality equivalence, also called dimensional reduction in the literature, says the following:

\begin{thm}\emph{(\cite{I, Hirano, T})}\label{thm:Kduality}
There is an equivalence 
\begin{align}\label{equiv:Phi}
	\Phi \colon D^b(\mathscr{P}) \stackrel{\sim}{\to}
	\mathrm{MF}^{\mathrm{gr}}(\mathscr{V}^{\vee}, f). 
	\end{align}
	The equivalence $\Phi$ is given by $\Phi(-)=\mathcal{K}\otimes_{\mathcal{O}_{\mathscr{P}}}(-)$
	for the Koszul factorization $\mathcal{K}$, see~\cite[Theorem~2.3.3]{T}. 
	\end{thm}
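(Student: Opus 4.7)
The plan is to construct $\Phi$ as tensor product with an explicit Koszul matrix factorization kernel and then prove the resulting functor is an equivalence, either by identifying both sides with a common singularity category or by a direct verification on compact generators.

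First I would write down the Koszul factorization $\mathcal{K}\in \mathrm{MF}^{\mathrm{gr}}(\mathscr{V}^{\vee},f)$ as follows. Let $\pi \colon \mathrm{Tot}_Y(V^{\vee}) \to Y$ be the projection (so that $\mathscr{V}^{\vee}= \mathrm{Tot}_Y(V^{\vee})/G$), and let $\tau$ denote the tautological section of $\pi^*V^{\vee}$. On the $\mathbb{Z}/2$-graded bundle $\Lambda^{\bullet} \pi^*V$ I take the differential
\[
d_{\mathcal{K}} = (s\wedge -) + \iota_\tau,
\]
for which a direct computation shows $d_{\mathcal{K}}^2 = \langle s,\tau\rangle = f$. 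The weight-$2$ action of $\mathbb{C}^*$ on the fibers of $V$ then makes $\mathcal{K}$ into a $G\times\mathbb{C}^*$-equivariant graded matrix factorization of $f$. I would then define $\Phi(-)=\mathcal{K}\otimes_{\mathcal{O}_{\mathscr{P}}}(-)$ at the level of dg-modules. The compatibility check here is that an object of $D^b(\mathscr{P})$ is a $G$-equivariant dg-module over $\mathcal{O}_Y[V^{\vee}[1];\partial]$, i.e.\ an $\mathcal{O}_Y$-module with a square-zero odd derivation by $V^{\vee}$ encoding $\partial$; tensoring with $\mathcal{K}$ converts this datum into an $\mathcal{O}_{\mathscr{V}^{\vee}}$-module whose total differential squares to $f$, producing exactly a graded matrix factorization.

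To prove $\Phi$ is an equivalence, I would factor it through Orlov's theorem for graded matrix factorizations, which identifies $\mathrm{MF}^{\mathrm{gr}}(\mathscr{V}^{\vee},f)$ with the graded singularity category $D_{\mathrm{sg}}^{\mathrm{gr}}(f^{-1}(0))$. Under this identification $\Phi$ becomes the dimensional-reduction functor, whose essential image is generated by the image of the Koszul resolution of $\mathcal{O}_{\mathscr{P}}$ on $f^{-1}(0)$; essential surjectivity follows because these Koszul resolutions generate, and fully faithfulness reduces to comparing Hom spaces computed by explicit Koszul complexes on both sides. Alternatively, one can proceed directly: verify that $\Phi$ sends pushforwards of $G$-equivariant vector bundles along the closed embedding $\mathscr{P}\hookrightarrow \mathscr{V}^{\vee}$ (which form a set of compact generators of $D^b(\mathscr{P})$) to compact generators of $\mathrm{MF}^{\mathrm{gr}}(\mathscr{V}^{\vee},f)$, and match Hom spaces on the nose using the Koszul resolution of $\mathcal{O}_{\mathscr{P}}$ as an $\mathcal{O}_{\mathscr{V}^{\vee}}$-module.

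The main obstacle is the careful bookkeeping of the three gradings in play: the $\mathbb{Z}/2$-grading of $\mathrm{MF}^{\mathrm{gr}}$, the internal $\mathbb{C}^{\star}$-weight encoded by the twist $(1)'$, and the homological $\mathbb{Z}$-grading on $D^b(\mathscr{P})$. One must verify that $d_{\mathcal{K}}$ has the correct parity, that the full $G\times\mathbb{C}^{\star}$-action is preserved, and that the twist $(1)'$ on the matrix factorization side matches the appropriate homological shift combined with an internal weight shift on the sheaf side. Since the statement is attributed to \cite{I, Hirano, T}, I would ultimately reduce the verification to invoking \cite[Theorem~2.3.3]{T}, having confirmed the hypotheses (smooth affine $Y$, reductive $G$, weight-$2$ fiberwise $\mathbb{C}^*$-action and weight-$2$ function $f$) are in force.
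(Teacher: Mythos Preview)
The paper does not prove this theorem; it is stated as a background result with attribution to \cite{I, Hirano, T} and a pointer to \cite[Theorem~2.3.3]{T} for the explicit form of the kernel $\mathcal{K}$, and the text moves immediately to the next subsection. So there is no proof in the paper to compare your proposal against.

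That said, your sketch is a faithful outline of how the cited references establish the equivalence: the Koszul factorization $\mathcal{K}=(\Lambda^{\bullet}\pi^{\ast}V,\, s\wedge(-)+\iota_{\tau})$ is exactly the kernel used in \cite[Theorem~2.3.3]{T}, and the two strategies you mention (passing through Orlov's identification with the graded singularity category, or directly matching compact generators via Koszul resolutions) are the standard routes taken in \cite{I} and \cite{Hirano}. Your closing remark, that one ultimately invokes the cited theorem after checking the hypotheses, is precisely what the paper does. There is no gap in your proposal; it simply supplies more detail than the paper, which treats the result as known.
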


\subsection{Window categories}\label{subsection:window}
\subsubsection{Attracting stacks}\label{attractingloci}
Let $Y$ be an affine variety with an action of a reductive group $G$. Let $\lambda$ be a cocharacter of $G$. 
Let $G^\lambda$ and $G^{\lambda\geq 0}$ be the Levi and parabolic groups associated to $\lambda$. Let $Y^\lambda\subset Y$ be the closed subvariety of $\lambda$-fixed points.
Consider the attracting variety \[Y^{\lambda\geq 0}:=\{y\in Y|\,\lim_{t\to 0}\lambda(t)\cdot y\in Y^\lambda\}\subset Y.\]
Consider the attracting and fixed stacks
\begin{equation}\label{attracting}
\mathscr{Z}:=Y^\lambda/G^\lambda \xleftarrow{q}\mathscr{S}:=Y^{\lambda\geq 0}/G^{\lambda\geq 0}\xrightarrow{p}\mathcal{Y}.
\end{equation}
The map $p$ is proper. 
Kempf-Ness strata are connected components of certain attracting stacks $\mathscr{S}$, and the map $p$ restricted to a Kempf-Ness stratum is a closed immersion, see~\cite[Section~2.1]{halp}. 
The attracting stacks also appear in the definition of Hall algebras \cite{P0} (for $Y$ an affine space), where the Hall product is induced by the functor 
\begin{equation}\label{Hallproductquiver}
    \ast:=p_*q^*\colon D^b(\mathscr{Z})\to D^b(\mathcal{Y}).
\end{equation}
In this case, the map $p$ may not be a closed immersion.

Let $T \subset G$ be a maximal torus and 
$\lambda$ is a cocharacter $\lambda \colon \mathbb{C}^{\ast} \to T$. 
For a $G$-representation $Y$, 
the attracting variety
$Y^{\lambda \geq 0} \subset Y$
coincides with the sub $T$-representation
generated by weights which pair non-negatively with $\lambda$. 
We denote by 
$\langle \lambda, Y^{\lambda \geq 0} \rangle:= \langle \lambda, \det Y^{\lambda \geq 0} \rangle$, 
where $\det Y^{\lambda \geq 0}$ is the sum of $T$-weights of $Y^{\lambda \geq 0}$.

\subsubsection{The definition of window categories}
Let $Y$ be an affine variety with an action of a reductive group $G$ and a linearization $\mathscr{L}$. Consider the stacks 
\[j\colon\mathcal{Y}^{\mathscr{L}\text{-ss}}:=Y^{\mathscr{L}\text{-ss}}/G\hookrightarrow\mathcal{Y}:=Y/G.\]
We review the construction of window categories of $D^b(\mathcal{Y})$ which are equivalent to $D^b(\mathcal{Y}^{\mathscr{L}\text{-ss}})$ via the restriction map, due to
Segal \cite{MR2795327}, Halpern-Leistner \cite{halp}, and Ballard--Favero--Katzarkov \cite{MR3895631}. We follow the presentation from \cite{halp}.

By also fixing a Weyl-invariant norm on the 
cocharacter lattice, 
the unstable locus $\mathcal{Y}\setminus \mathcal{Y}^{\mathscr{L}\text{-ss}}$ has a stratification in Kempf-Ness strata $\mathscr{S}_i$ for $i\in I$ a finite ordered set:
\[\mathcal{Y}\setminus \mathcal{Y}^{\mathscr{L}\text{-ss}}=\bigsqcup_{i\in I}\mathscr{S}_i.\]
A Kempf-Ness stratum $\mathscr{S}_i$ is the attracting stack
in $\mathcal{Y} \setminus \sqcup_{j<i}\mathcal{S}_j$
for
a cocharacter $\lambda_i$, with the fixed stack $\mathscr{Z}_i:=\mathscr{S}_i^{\lambda_i}$. 
Let $N_{\mathscr{S}_i/\mathcal{Y}}$ be the normal bundle of $\mathscr{S}_i$ in $\mathcal{Y}$.
 Define the width of the window categories \[\eta_i:=\left\langle \lambda_i, N^{\vee}_{\mathscr{S}_i/\mathcal{Y}}|_{\mathscr{Z}_i}
 \right\rangle.\]
  For a choice of real numbers $w=(w_i)_{i\in I}\in \mathbb{R}^I$, define the category
\begin{equation}\label{def:window}
\mathbb{G}_w:=\{\mathcal{F}\in D^b(\mathcal{Y})\text{ such that } \mathrm{wt}_{\lambda_i}(\mathcal{F}|_{\mathscr{Z}_i})\subset [
w_i, w_i+\eta_i) \text{ for all }i\in I\}.
\end{equation}
In the above, $\mathrm{wt}_{\lambda_i}(\mathcal{F}|_{\mathscr{Z}_i})$
is the set of $\lambda_i$-weights on $\mathcal{F}|_{\mathscr{Z}_i}$.
Then \cite[Theorem 2.10]{halp}
says that the restriction functor 
$j^*$ induces an equivalence of categories: 
\begin{equation}\label{jequiv}
j^*\colon \mathbb{G}_w\xrightarrow{\sim} D^b\big(\mathcal{Y}^{\mathscr{L}\text{-ss}}\big)
\end{equation}
for any choice of real numbers $w=(w_i)_{i\in I}\in \mathbb{R}^I$.

We discuss a slight extension of the above theorem for matrix factorizations. Let $f\colon \mathcal{Y}\to \mathbb{C}$ be a regular function and recall the definition of $\mathrm{MF}(\mathbb{G}_w, f)$ from Subsection \ref{sub:ungradedMF}. Then there is an equivalence 
\begin{equation}\label{jequiv2}
j^*\colon \mathrm{MF}(\mathbb{G}_w, f)\xrightarrow{\sim} \mathrm{MF}\big(\mathcal{Y}^{\mathscr{L}\text{-ss}}, f\big).
\end{equation}
The analogous statement holds for categories of graded matrix factorizations.

\subsection{Quasi-smooth stacks and singular support}\label{sec:quasismooth}

References for this section are \cite[Subsections 3.1 and 3.2.1]{T}, \cite{AG}.

\subsubsection{Quasi-smooth stacks}\label{subsub:qsmooth} Let $\mathfrak{M}$ be a derived stack over $\mathbb{C}$ and let $\mathcal{M}$ be its classical truncation. Let $\mathbb{L}_{\mathfrak{M}}$ be the cotangent complex 
of $\mathfrak{M}$. The stack $\mathfrak{M}$ is called \textit{quasi-smooth} if for all closed points $x\to \mathcal{M}$, the restriction $\mathbb{L}_\mathfrak{M}|_x$ has cohomological amplitude in $[-1, 1]$.
Examples of quasi-smooth stacks are the Koszul stacks $\mathscr{P}$ from \eqref{koszuldef} or the moduli stacks of Gieseker semistable compactly supported sheaves on a smooth surface $S$. 

By \cite[Theorem 2.8]{BBBJ}, a stack $\mathfrak{M}$ is quasi-smooth if and only if it is a $1$-stack and any point of $\mathfrak{M}$ lies in the image of a $0$-representable smooth morphism \begin{equation}\label{alpha}
    \alpha \colon \mathscr{U}\to\mathfrak{M}
\end{equation} for a Koszul scheme $\mathscr{U}$ as in \eqref{Kscheme}.
The dg-category $D^b(\mathfrak{M})$ is defined to be the limit
\begin{align*}
    D^b(\mathfrak{M})=\lim_{\mathscr{U} \to \mathfrak{M}} D^b(\mathscr{U}). 
\end{align*}

Suppose that $\mathcal{M}$ admits a good moduli space
$\mathcal{M} \to M$, 
see~\cite{MR3237451} for the notion of good moduli space.
For each point in $M$, there is an \'{e}tale
  neighborhood $U \to M$
  and Cartesian squares 
  \begin{align}\label{dia:fnbd}
	\xymatrix{
		\mathfrak{M}_U \ar[d] \ar@{}[rd]|\square
		& \ar@<-0.3ex>@{_{(}->}[l] \mathcal{M}_U
		\ar[r] \ar[d] \ar@{}[rd]|\square & U \ar[d] \\
		\mathfrak{M}  & \ar@<-0.3ex>@{_{(}->}[l] \mathcal{M} \ar[r] & M
	}
\end{align}
where each vertical arrow is \'{e}tale
and $\mathfrak{M}_U$ is equivalent to a Koszul stack 
$\mathscr{P}=s^{-1}(0)/G$ as in (\ref{koszuldef}), see~\cite[Subsection~3.1.4]{T}, \cite[Theorem 4.2.3]{HalpK32}, \cite{AHR}.

\subsubsection{$(-1)$-shifted cotangent stacks}
Let $\mathfrak{M}$ be a quasi-smooth stack. Let $\mathbb{T}_\mathfrak{M}$ be the tangent complex of $\mathfrak{M}$, which is the dual complex to the cotangent complex $\mathbb{L}_\mathfrak{M}$.
We denote by $\Omega_{\mathfrak{M}}[-1]$ the \textit{$(-1)$-shifted cotangent stack} of $\mathfrak{M}$:
\begin{align*}
\Omega_\mathfrak{M}[-1]:=\mathrm{Spec}_\mathfrak{M}\left(\mathrm{Sym}(\mathbb{T}_\mathfrak{M}[1])\right).
\end{align*}
Consider the projection map 
\begin{equation}\label{p0}
p_0\colon \mathcal{N}:=t_0\left(\Omega_\mathfrak{M}[-1]\right)\to \mathfrak{M}.
\end{equation}
For a Koszul stack $\mathscr{P}$ as in \eqref{koszuldef}, recall the function $f$ from \eqref{defreg} and consider the critical locus $\mathrm{Crit}(f)\subset \mathscr{V}^{\vee}$. 
In this case, the map $p_0$ is the natural projection
\begin{equation}\label{p1}
p_0\colon \mathrm{Crit}(f)=t_0\left(\Omega_\mathscr{P}[-1]\right)\to \mathscr{P}.
\end{equation}

\subsubsection{Singular support}\label{singularsupport}
We continue with the notations from the above subsection. 
Arinkin--Gaitsgory \cite{AG} defined the notion of singular support of an object $\mathcal{F}\in D^b(\mathfrak{M})$, 
denoted by 
\begin{align*}
    \mathrm{Supp}^{\rm{sg}}(\mathcal{F}) \subset \mathcal{N}. 
\end{align*}
The definition is compatible with maps $\alpha$ as in \eqref{alpha}, see \cite[Definition 3.2.1]{T}. 
Consider the group $\mathbb{C}^*$ scaling the fibers of the map $p_0$. 
A closed substack $\mathcal{Z}$ of $\mathcal{N}$ is called \textit{conical} if it is closed under the action of $\mathbb{C}^*$. 
The singular support $\mathrm{Supp}^{\rm{sg}}(\mathcal{F})$ of $\mathcal{F}$ is a 
conical subset $\mathcal{Z}$ of $\mathcal{N}$. 

Consider a Koszul stack $\mathscr{P}$ as in \eqref{koszuldef} and recall the Koszul equivalence $\Phi$ from \eqref{equiv:Phi}. Under $\Phi$, the singular support of $\mathcal{F}$ corresponds to the support $\mathcal{Z}$ of the matrix factorization $\Phi(\mathcal{F})$, namely the maximal closed substack $\mathcal{Z}\subset \mathrm{Crit}(w)$ such that $\mathcal{F}|_{\mathscr{V}^{\vee}\setminus \mathcal{Z}}=0$ in $\mathrm{MF}^{\mathrm{gr}}(\mathscr{V}^{\vee}\setminus \mathcal{Z}, w)$, see \cite[Subsection 2.3.9]{T}.

\subsubsection{DT categories via singular support quotients}\label{subsec:ssupport}
We review the definition of DT categories for quasi-smooth stacks \cite[Section 3.2]{T}. 
Let $\mathfrak{M}$ be a quasi-smooth stack with 
classical truncation $\mathcal{M}$ and 
let $\mathcal{N}:=t_0(\Omega_{\mathfrak{M}}[-1])$. 
Let $\mathscr{L}$ be a line bundle on $\mathcal{M}$.
We regard $\mathscr{L}$ as a line bundle on $\mathcal{N}$ by 
pulling it back via $p_0\colon \mathcal{N} \to \mathcal{M}$. 
We
let $\mathcal{N}^{\mathscr{L}\text{-ss}} \subset \mathcal{N}$ be the open substack of $\mathscr{L}$-semistable 
points, see~\cite[Subsections 2.1 and 2.2, Example 4.5.2]{Halpinstab}
for the definition of semistable points 
with respect to maps from 
the $\Theta$-stack 
$\Theta:=[\mathbb{A}^1/\mathbb{G}_m]$. 
Its complement \[\mathcal{Z}:=\mathcal{N}\setminus \mathcal{N}^{\mathscr{L}\text{-ss}}\subset \mathcal{N}\] is a conical 
closed substack. 
Let $\mathcal{C}_{\mathcal{Z}} \subset D^b(\mathfrak{M})$
be the subcategory consisting of objects with singular supports
contained in $\mathcal{Z}$. The quotient category 
\begin{align}\label{def:catDT}
    D^b(\mathfrak{M})/\mathcal{C}_{\mathcal{Z}}
\end{align}
is a model of the 
DT category for the semistable locus on the $(-1)$-shifted 
cotangent stack, see~\cite[Definition 3.2.2]{T}. 
\begin{remark}
If $\mathfrak{M}=\mathscr{P}$ for a Koszul stack $\mathscr{P}$ as in (\ref{koszuldef}), 
then the quotient category (\ref{def:catDT}) is 
equivalent to $\mathrm{MF}^{\rm{gr}}((\mathscr{V}^{\vee})^{\mathscr{L}\text{-ss}}, f)$, 
see~\cite[Proposition~2.3.9]{T}. 
As a grading is involved on the matrix factorization side, 
the category (\ref{def:catDT}) is called the $\mathbb{C}^{\ast}$-equivariant DT category in \cite{T}. 
The (ungraded) $\mathbb{Z}/2$-periodic version is introduced in~\cite{T4} and it is
given by the quotient 
\begin{align}\label{def:catDT2}
    D^b(\mathfrak{M}_{\varepsilon})/\mathcal{C}_{\mathcal{Z}_{\varepsilon}}. 
\end{align}
Here $\mathfrak{M}_{\varepsilon}=\mathfrak{M}\times \Spec \mathbb{C}[\varepsilon]$
with $\deg \varepsilon=-1$, and 
\begin{align*}
    \mathcal{Z}_{\varepsilon}=\mathbb{C}^{\ast}(\mathcal{Z} \times \{1\})
    \sqcup (\mathcal{N} \times \{0\}) \subset \mathcal{N} \times \mathbb{C}
    =t_0(\Omega_{\mathfrak{M}_{\varepsilon}}[-1]).
\end{align*}
In the case of a Koszul stack $\mathfrak{M}=\mathscr{P}$, the category (\ref{def:catDT2})
is equivalent to the category of (ungraded) matrix factorizations $\mathrm{MF}((\mathscr{V}^{\vee})^{\mathscr{L}\text{-ss}}, f)$. 
The argument used to prove Theorem~\ref{thm:main} can be also used to 
obtain its $\mathbb{Z}/2$-periodic version. 
\end{remark}

\subsection{The window theorem for DT categories}\label{windowfirst}
\subsubsection{Window categories}
We review
the theory of window categories for quasi-smooth stacks \cite[Chapter 5]{T}.
Let $\mathfrak{M}$ be quasi-smooth and assume
throughout this subsection that $\mathcal{M}=t_0(\mathfrak{M})$ admits a good moduli space 
$\mathcal{M} \to M.$
Let $\mathscr{L}$ be a line bundle on $\mathcal{M}$
and take 
a positive definite class $b \in H^4(\mathcal{M}, \mathbb{Q})$, see~\cite[Definition~3.7.6]{Halpinstab}. 
We also use the same symbols $(\mathcal{L}, b)$ for 
$p_0^{\ast}\mathcal{L} \in \mathrm{Pic}(\mathcal{N})$ and 
$p_0^{\ast}b \in H^4(\mathcal{N}, \mathbb{Q})$. 
Then there is a $\Theta$-stratification 
with respect to $(\mathcal{L}, b)$
\begin{align}\label{theta:N}
    \mathcal{N}=\mathscr{S}_1 \sqcup \cdots \sqcup \mathscr{S}_N\sqcup \mathcal{N}^{\mathscr{L}\text{-ss}}
\end{align}
with centers $\mathscr{Z}_i\subset\mathscr{S}_i$, see~\cite[Theorem 5.2.3, Proposition 5.3.3]{Halpinstab}.  
When $\mathcal{M}$ is a (global) quotient stack, 
$\Theta$-stratifications are the same as Kempf-Ness stratifications \cite[Example 0.0.5]{Halpinstab}. 
The class $b$ is then constructed as the pullback of the class in $H^4(BG_y, \mathbb{Q})$ corresponding to chosen positive definite form \cite[Example 5.3.4]{Halpinstab}.
In the above situation, an analogue of the window theorem is 
proved in~\cite[Theorem 1.1]{Totheta}, \cite[Theorem 5.3.13]{T}:
\begin{thm}\emph{(\cite{T, Totheta})}\label{thm:window:M}
In addition to the above, 
suppose that $\mathcal{M} \to M$ satisfies the formal neighborhood theorem. 
Then for each $w=(w_i)_{i=1}^N\in \mathbb{R}^N$, 
 there is a subcategory $\mathbb{W}_w \subset D^b(\mathfrak{M})$
 such that the composition 
 \begin{align*}
     \mathbb{W}_{w} \subset D^b(\mathfrak{M}) \twoheadrightarrow
     D^b(\mathfrak{M})/\mathcal{C}_{\mathcal{Z}}
 \end{align*}
 is an equivalence. 
\end{thm}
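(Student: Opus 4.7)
The plan is to reduce the statement to a formal-local question on Koszul models, then invoke the matrix factorization window theorem \eqref{jequiv2} through the Koszul duality equivalence of Theorem \ref{thm:Kduality}.

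First, I would define the window subcategory $\mathbb{W}_w \subset D^b(\mathfrak{M})$ intrinsically from the $\Theta$-stratification \eqref{theta:N}. For each stratum $\mathscr{S}_i$ with center $\mathscr{Z}_i$, distinguished cocharacter $\lambda_i$, and width
\[
\eta_i := \bigl\langle \lambda_i,\, N^{\vee}_{\mathscr{S}_i/\mathcal{N}}\big|_{\mathscr{Z}_i}\bigr\rangle,
\]
declare $\mathcal{F}\in D^b(\mathfrak{M})$ to belong to $\mathbb{W}_w$ when its local Koszul-dual matrix factorization incarnations have $\lambda_i$-weights along $\mathscr{Z}_i$ contained in $[w_i, w_i+\eta_i)$. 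This condition is intrinsic to the $\Theta$-stratification and is preserved under \'etale base change, so $\mathbb{W}_w$ is a well-defined full subcategory of $D^b(\mathfrak{M})$.

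Next, using the formal neighborhood theorem for $\mathcal{M}\to M$ together with diagram \eqref{dia:fnbd}, I would pass to an \'etale neighborhood $U\to M$ on which $\mathfrak{M}_U \simeq \mathscr{P} = s^{-1}(0)/G$ is a Koszul stack. Theorem \ref{thm:Kduality} then furnishes an equivalence
\[
\Phi\colon D^b(\mathscr{P}) \xrightarrow{\sim} \mathrm{MF}^{\mathrm{gr}}(\mathscr{V}^{\vee}, f),
\]
which sends the singular-support subcategory $\mathcal{C}_{\mathcal{Z}}$ onto the subcategory of matrix factorizations supported on the $\mathscr{L}$-unstable locus of $\mathscr{V}^{\vee}$. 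Consequently the quotient $D^b(\mathscr{P})/\mathcal{C}_{\mathcal{Z}}$ is identified with $\mathrm{MF}^{\mathrm{gr}}((\mathscr{V}^{\vee})^{\mathscr{L}\text{-ss}}, f)$. I would then extend the $\Theta$-stratification of $\mathcal{N} = \mathrm{Crit}(f)$ to a Kempf-Ness stratification of the ambient smooth quotient stack $\mathscr{V}^{\vee}$ using the same cocharacters $\lambda_i$ and centers lying above $\mathscr{Z}_i$, so that the graded version of the matrix factorization window theorem \eqref{jequiv2} produces an equivalence
\[
\mathrm{MF}^{\mathrm{gr}}(\mathbb{G}_w, f) \xrightarrow{\sim} \mathrm{MF}^{\mathrm{gr}}((\mathscr{V}^{\vee})^{\mathscr{L}\text{-ss}}, f).
\]
Transporting this equivalence back through $\Phi$ gives the local incarnation of $\mathbb{W}_w \xrightarrow{\sim} D^b(\mathfrak{M}_U)/\mathcal{C}_{\mathcal{Z}}$, and since the defining weight conditions for $\mathbb{W}_w$ are intrinsic to the $\Theta$-stratification, these formal-local equivalences glue to the desired global equivalence.

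The main obstacle I anticipate is the extension of the $\Theta$-stratification on $\mathcal{N}$ to a Kempf-Ness stratification on the smooth ambient stack $\mathscr{V}^{\vee}$ in a way that is compatible across the \'etale changes of Koszul model appearing in \eqref{dia:fnbd}. The stack $\mathfrak{M}$ is not a global quotient stack in general, so Kempf-Ness strata need not exist on it, and the argument is intrinsically formal-local: the formal neighborhood theorem hypothesis is precisely what guarantees that the weight conditions cohere into a global subcategory $\mathbb{W}_w$ and that the matrix factorization window equivalences can be stitched together across overlapping Koszul charts to yield the equivalence with $D^b(\mathfrak{M})/\mathcal{C}_{\mathcal{Z}}$.
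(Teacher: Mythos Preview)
The paper does not actually prove this theorem; it is quoted from \cite[Theorem~1.1]{Totheta} and \cite[Theorem~5.3.13]{T}. What the paper does provide, immediately after the statement, is the formal-local characterization of $\mathbb{W}_w$ via the Koszul duality equivalence $\Phi_y$ and the window subcategory $\mathbb{G}_{w'}$ on the matrix factorization side, which is the same architecture you outline.

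Your sketch is therefore aligned with the intended argument, but one concrete point is missing. The paper records that the window parameters on the matrix factorization side are \emph{not} $w_i$ but rather
\[
w_i' = w_i - \langle \lambda_i,\, \mathcal{H}^1(\mathbb{T}_{\mathfrak{M}}|_y)^{\lambda_i>0}\rangle,
\]
see \eqref{PhiEy}. This shift arises because the Koszul duality functor $\Phi$ is not weight-neutral: it involves tensoring with the Koszul factorization, which carries a nontrivial $\lambda_i$-weight coming from the odd generators $V^{\vee}[1]$ in the Koszul resolution. If you transport the weight condition $[w_i, w_i+\eta_i)$ naively through $\Phi$ you will land in the wrong window on $\mathscr{V}^{\vee}$, and the local equivalences will not match the global $\Theta$-stratification data consistently across different points $y$. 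Tracking this shift is exactly what makes the formal-local conditions glue.

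Your identified obstacle about extending the $\Theta$-stratification on $\mathcal{N}$ to a Kempf--Ness stratification on the ambient $\mathscr{V}^{\vee}$ is real and is handled in the cited references; the formal neighborhood hypothesis is indeed what allows one to present $\widehat{\mathfrak{M}}_y$ as $\kappa_y^{-1}(0)/G_y$ with a canonical $G_y$-representation as ambient space, on which the pulled-back $\Theta$-strata become genuine Kempf--Ness strata.
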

We explain the meaning of ``the formal neighborhood theorem" in the statement of the above theorem, see \cite[Definition 5.2.3]{T}. 
For a closed point $y \in M$, denote also by $y \in \mathcal{M}$
the unique closed point in the fiber of $\mathcal{M} \to M$
at $y$.
Set $G_y:=\mathrm{Aut}(y)$,
which is a reductive algebraic group. 
Let $\widehat{\mathcal{M}}_y$ be the formal fiber
along with $\mathcal{M} \to M$ at $y$. 
Then the formal 
neighborhood theorem says that 
there is a $G_y$-equivariant morphism 
\begin{align*}
\kappa_y \colon \widehat{\mathcal{H}}^0(\mathbb{T}_{\mathcal{M}}|_{y}) 
\to \mathcal{H}^1(\mathbb{T}_{\mathcal{M}}|_{y}) 
\end{align*}
such that, by setting $\mathcal{U}_y$
to be the classical zero locus of $\kappa_y$,
there is an isomorphism  
$\widehat{\mathcal{M}}_y \cong \mathcal{U}_y/G_y$. 
In this case, 
there is a (unique up to equivalence) derived stack $\widehat{\mathfrak{M}}_y$ and Cartesian squares
\begin{align*}
   	\xymatrix{
		\widehat{\mathfrak{M}}_y \ar[d] \ar@{}[rd]|\square
		& \ar@<-0.3ex>@{_{(}->}[l] \widehat{\mathcal{M}}_y
		\ar[r] \ar[d] \ar@{}[rd]|\square & 
		\Spec \widehat{\mathcal{O}}_{M, y}\ar[d] \\
		\mathfrak{M}  & \ar@<-0.3ex>@{_{(}->}[l] \mathcal{M} \ar[r] & M.
    }
\end{align*}
We call $\widehat{\mathfrak{M}}_y$ \textit{the formal fiber} of $\mathfrak{M}$ at $y$.
Let $\mathfrak{U}_y$ be the derived zero locus of 
$\kappa_y$. 
Then, by replacing $\kappa_y$ if necessary, 
$\widehat{\mathfrak{M}}_y$ is equivalent to 
$\mathfrak{U}_y/G$, see~\cite[Lemma~5.2.5]{T}. 

\subsubsection{Local description of window categories}
Below we give a formal local 
description of $\mathbb{W}_w$. 
Consider the pair of a smooth
stack and a regular function $(\mathcal{X}_y, f_y)$:
\begin{align*}
    \mathcal{X}_y:=\left(\widehat{\mathcal{H}}^0(\mathbb{T}_{\mathfrak{M}}|_{y})
    \oplus \mathcal{H}^1(\mathbb{T}_{\mathfrak{M}}|_{y})^{\vee}\right)/G_y
    \stackrel{f_y}{\to} \mathbb{C},
\end{align*}
where $f_y(u, v)=\langle \kappa_y(u), v \rangle$. 
From (\ref{p1}), the critical locus of $f_y$
is isomorphic to the classical 
truncation of the $(-1)$-shifted cotangent stack over $\widehat{\mathfrak{M}}_y$, 
so it is isomorphic to the formal fiber 
$\widehat{\mathcal{N}}_y$
of $\mathcal{N} \to \mathcal{M} \to M$
at $y$. 
The pull-back of the $\Theta$-stratification 
(\ref{theta:N}) to $\widehat{\mathcal{N}}_y$
gives a Kempf-Ness stratification 
\begin{align*}
    \widehat{\mathcal{N}}_y=\widehat{\mathscr{S}}_{1, y} \sqcup 
    \cdots \sqcup \widehat{\mathscr{S}}_{N, y} \sqcup \widehat{\mathcal{N}}_y^{\mathscr{L}\text{-ss}}
\end{align*}
with centers $\widehat{\mathscr{Z}}_{i, y}\subset \widehat{\mathscr{S}}_{i, y}$
and one parameter subgroups $\lambda_i \colon \mathbb{C}^{\ast} \to G_y$. 
By Koszul duality, 
see Theorem~\ref{thm:Kduality}, 
there is an equivalence:
\begin{align*}
    \Phi_y \colon D^b(\widehat{\mathfrak{M}}_y) \stackrel{\sim}{\to}
    \mathrm{MF}^{\text{gr}}(\mathcal{X}_y, f_y). 
\end{align*}
Then the subcategory $\mathbb{W}_{w}$ in Theorem~\ref{thm:window:M} is characterized 
as follows: 
an object $\mathcal{E} \in D^b(\mathfrak{M})$ is an object of 
$\mathbb{W}_w$
if and only if, for any closed point $y \in M$, we have 
\begin{align}\label{PhiEy}
    \Phi_{y}(\mathcal{E}|_{\widehat{\mathfrak{M}}_y}) \in \mathrm{MF}^{\text{gr}}(\mathbb{G}_{w'}, f_y), \ 
  w_i'=w_i-\langle \lambda_i, \mathcal{H}^1(\mathbb{T}_{\mathfrak{M}}|_{y})^{\lambda_i>0} \rangle.  
\end{align}
The category $\mathbb{G}_w$ is the window category \eqref{def:window} for the weights $w'_i$ and the 
line bundle $\mathscr{L}$.
The difference of $w_i$ and $w_i'$ is due to the discrepancy of categorical Hall products on 
$\mathfrak{M}_y$ and $\mathcal{X}_y$, see \cite[Proposition 3.1]{P2}. 

\subsubsection{Adjoints of window categories}
Let $\mathcal{N}^{\mathscr{L}\text{-st}} \subset \mathcal{N}^{\mathscr{L}\text{-ss}}$ be the substack of $\mathscr{L}$-stable 
points.
We will use the following lemma in the proof of Theorem \ref{thm:windowdec}: 
\begin{lemma}\label{lemma:adjoint}
Assume that $\mathcal{N}^{\mathscr{L}\text{-st}} =\mathcal{N}^{\mathscr{L}\text{-ss}}$. 
Then the inclusion $\mathbb{W}_w \subset D^b(\mathfrak{M})$ has a right adjoint. 
\end{lemma}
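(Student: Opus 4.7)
The plan is to establish the existence of the right adjoint by reducing, via the formal neighborhood theorem, to the local matrix factorization picture, where a semiorthogonal decomposition of Halpern-Leistner type exhibits the window as a two-sided complement, and then to glue the resulting local right adjoints along the formal fibers.

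First, I would work locally at each closed point $y \in M$. By the characterization (\ref{PhiEy}), the restriction of $\mathbb{W}_w$ to $\widehat{\mathfrak{M}}_y$ corresponds under the Koszul equivalence $\Phi_y$ to $\mathrm{MF}^{\mathrm{gr}}(\mathbb{G}_{w'}, f_y) \subset \mathrm{MF}^{\mathrm{gr}}(\mathcal{X}_y, f_y)$. Since $\mathcal{X}_y$ is smooth, the Kempf-Ness stratification induced from the $\Theta$-stratification (\ref{theta:N}) of $\mathcal{N}$ produces by \cite{halp, MR3895631} a semiorthogonal decomposition in which $\mathbb{G}_{w'}\subset D^b(\mathcal{X}_y)$ is a two-sided component with complementary pieces built from weight-restricted subcategories on the unstable strata. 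Passing to graded matrix factorizations through the refinement of (\ref{jequiv2}) stratum by stratum, and then transporting back through $\Phi_y$, I expect a local semiorthogonal decomposition
\begin{align*}
D^b(\widehat{\mathfrak{M}}_y) = \bigl\langle \mathcal{C}^{-}_{\mathcal{Z},y}, \; \mathbb{W}_w|_{\widehat{\mathfrak{M}}_y}, \; \mathcal{C}^{+}_{\mathcal{Z},y} \bigr\rangle,
\end{align*}
with both outer pieces contained in $\mathcal{C}_{\mathcal{Z}}|_{\widehat{\mathfrak{M}}_y}$. In particular the local inclusion admits a right adjoint, given by projection onto the middle factor of this SOD.

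Second, the hypothesis $\mathcal{N}^{\mathscr{L}\text{-st}} = \mathcal{N}^{\mathscr{L}\text{-ss}}$ should enter precisely in identifying the complementary pieces $\mathcal{C}^{\pm}_{\mathcal{Z},y}$ with subcategories of $\mathcal{C}_{\mathcal{Z}}|_{\widehat{\mathfrak{M}}_y}$: the presence of strictly semistable points would allow objects with $\lambda_i$-weight zero at some center $\widehat{\mathscr{Z}}_{i,y}$ to sit on the boundary of the window interval $[w'_i,\,w'_i+\eta_i)$, producing objects that lie neither in the window nor cleanly in its left or right orthogonal complement on the matrix factorization side. With the hypothesis in force, this boundary phenomenon is absent, the window interval is properly separated from the complementary weight ranges at every center, and the local complements are genuine subcategories of $\mathcal{C}_{\mathcal{Z}}|_{\widehat{\mathfrak{M}}_y}$, making the local SOD unambiguous.

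Finally, I would globalize. The formal neighborhoods from (\ref{dia:fnbd}) give a descent datum for $D^b(\mathfrak{M})$ in the sense of the limit presentation of Subsection \ref{subsub:qsmooth}. The local right adjoints are characterized by the universal property of the SOD and are therefore compatible with pullback along the \'etale maps between formal neighborhoods, so descending along this cover produces a global right adjoint to $\mathbb{W}_w \subset D^b(\mathfrak{M})$. The main obstacle is precisely this descent step: one must verify that the $\Theta$-stratification of $\mathcal{N}$ restricts coherently to the Kempf-Ness stratifications of the various $\widehat{\mathcal{N}}_y$, and that the locally-defined projection functors onto $\mathbb{W}_w|_{\widehat{\mathfrak{M}}_y}$ patch into a well-defined functor on $D^b(\mathfrak{M})$ rather than just on individual formal fibers. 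Once this compatibility is checked, the lemma follows.
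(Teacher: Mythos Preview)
Your local argument via the Halpern--Leistner semiorthogonal decomposition is a valid and genuinely different route from the paper's. The paper does not use the SOD at all for the local step: instead it reduces via Koszul duality to a window $\mathbb{G}_w \subset D^b(\mathcal{Y})$ with $\mathcal{Y}=Y/G$ smooth, observes that the composite $D^b(\mathcal{Y}^{\mathscr{L}\text{-ss}}) \xrightarrow{(j^{\ast})^{-1}} \mathbb{G}_w \hookrightarrow D^b(\mathcal{Y})$ is a Fourier--Mukai functor with kernel $\mathcal{P}$ supported on $\mathcal{Y}^{\mathscr{L}\text{-ss}} \times_{Y\ssslash G} \mathcal{Y}$, and then writes down the right adjoint explicitly as the Fourier--Mukai transform with kernel $\mathcal{P}^{\vee} \boxtimes \omega_{\mathcal{Y}^{\mathscr{L}\text{-ss}}}[\dim \mathcal{Y}]$. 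The hypothesis $\mathcal{N}^{\mathscr{L}\text{-st}} = \mathcal{N}^{\mathscr{L}\text{-ss}}$ is used precisely here: it makes $\mathcal{Y}^{\mathscr{L}\text{-ss}}$ a \emph{scheme} proper over $Y\ssslash G$, so the support of $\mathcal{P}$ is proper over $\mathcal{Y}$ and cohomologically proper over $\mathcal{Y}^{\mathscr{L}\text{-ss}}$, and the dual-kernel functor is well-defined on bounded coherent categories. The payoff is that base-change compatibility of the right adjoint is then immediate from base change for Fourier--Mukai kernels, so the globalization (as a limit over \'etale neighborhoods $U \to M$, not formal fibers) goes through without further work.

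Your second paragraph, however, misidentifies the role of the hypothesis. The Halpern--Leistner SOD always places $\mathbb{G}_{w'}$ as an admissible summand with complementary pieces supported on the unstable strata, regardless of whether strictly semistable points exist; the half-open window intervals $[w_i', w_i'+\eta_i)$ already handle boundary weights. So your local right adjoint exists without invoking the hypothesis at all, and your proposed justification via ``boundary weight zero at $\widehat{\mathscr{Z}}_{i,y}$'' is not the correct mechanism. If your approach is completed, it may in fact prove the lemma without the hypothesis --- but then the burden shifts entirely to the descent step, which you correctly flag as the obstacle and do not resolve. You would need to verify that the SOD-projection functors commute with pullback along the transition maps in the limit diagram (and you should work over \'etale neighborhoods $\mathfrak{M}_U$ rather than formal fibers $\widehat{\mathfrak{M}}_y$, since the limit presentation of $D^b(\mathfrak{M})$ and $\mathbb{W}_w$ in the paper is taken over the former). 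The paper sidesteps this verification entirely by having an explicit kernel.
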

\begin{proof}
    First assume that $\mathfrak{M}$ is a Koszul stack \eqref{koszuldef}, so  $\mathfrak{M}=s^{-1}(0)/G$
    for a section $s$ of a vector bundle $V$ on a smooth affine scheme $Y$
    and a reductive algebraic group $G$. 
    By Koszul duality and replacing $Y$ with $\mathrm{Tot}_Y(V^{\vee})$, we can reduce the problem
    to the existence of a right adjoint of a window category
    $\mathbb{G}_w \subset D^b(\mathcal{Y})$ as in Subsection \ref{subsection:window},
    where $\mathcal{Y}=Y/G$
    and 
    $\mathcal{Y}^{\mathscr{L}\text{-st}}=\mathcal{Y}^{\mathscr{L}\text{-ss}}$
    so that 
    $\mathcal{Y}^{\mathscr{L}\text{-ss}}$ is a 
    projective scheme over $Y\ssslash G$. The scheme $Y$ is smooth and affine, so 
    the good moduli space morphism $\mathcal{Y} \to Y\ssslash G$
    is cohomologically proper. 
    The composition 
    \begin{align}\label{compose:G}
  D^b(\mathcal{Y}^{\mathscr{L}\text{-ss}}) \stackrel{(j^{\ast})^{-1}}{\longrightarrow}
  \mathbb{G}_w \subset D^b(\mathcal{Y})
  \end{align}
  is given by a Fourier-Mukai functor with kernel object
  $\mathcal{P} \in D^b(\mathcal{Y}^{\mathscr{L} \text{-ss}} \times \mathcal{Y})$
  supported on $\mathcal{Y}^{\mathscr{L}\text{-ss}} \times_{Y\ssslash G}\mathcal{Y}$, 
  see~\cite[Lemma~6.7]{T3}. The support of $\mathcal{P}$ is proper over $\mathcal{Y}$
  and cohomologically proper over $\mathcal{Y}^{\mathscr{L}\text{-ss}}$. Thus 
  the Fourier-Mukai functor 
  $D^b(\mathcal{Y}) \to D^b(\mathcal{Y}^{\mathscr{L}\text{-ss}})$
  with kernel $\mathcal{P}^{\vee} \boxtimes \omega_{\mathcal{Y}^{\mathscr{L}\text{-ss}}}[\dim \mathcal{Y}]$
  is well-defined and
  gives a right adjoint of (\ref{compose:G}). 
  
  In general, for each point in $M$, there is an \'{e}tale
  neighborhood $U \to M$
  and Cartesian squares (\ref{dia:fnbd})
 such that $\mathfrak{M}_U$ is of the form 
$s^{-1}(0)/G$ as above. 
Let $\mathbb{W}_{U, w} \subset D^b(\mathfrak{M}_U)$
be the window subcategory in Theorem~\ref{thm:window:M} for $\mathfrak{M}_U$, which 
admits a right adjoint $R_U$ by the above argument. 
Since we have 
\begin{align*}
    \mathbb{W}_{w}=\lim_{U \to M}\mathbb{W}_{U, w}, \ 
    D^b(\mathfrak{M})=\lim_{U\to M}D^b(\mathfrak{M}_U),
\end{align*}
and $R_U$ is compatible with base change, 
a right adjoint of $\mathbb{W}_w \subset D^b(\mathfrak{M})$
is obtained as $\lim_{U\to M}R_U$. 
See Subsection \cite[Subsection 3.1.4]{T} for the definition of the category of smooth morphisms in which we are taking the above limits, 
and the proof of~\cite[Theorem~5.3.13]{T}
for the above descriptions of $\mathbb{W}_w$
and $D^b(\mathfrak{M})$ as limits. 
\end{proof}

\subsection{Intrinsic window subcategory}
Let $\mathfrak{M}$ be a quasi-smooth derived stack 
such that $\mathcal{M}=t_0(\mathfrak{M})$ admits a good moduli 
space $\mathcal{M} \to M$. 
We say that $\mathfrak{M}$ is \textit{symmetric} if for any closed point 
$x \in \mathfrak{M}$, the $G_x:=\mathrm{Aut}(x)$-representation 
\begin{align*}
	\mathcal{H}^0(\mathbb{T}_{\mathfrak{M}}|_{x}) \oplus 
	\mathcal{H}^1(\mathbb{T}_{\mathfrak{M}}|_{x})^{\vee}
	\end{align*}
is a self dual $G_x$-representation. 
In this subsection, we assume that $\mathfrak{M}$ is symmetric. 
Let $\delta \in \mathrm{Pic}(\mathfrak{M})_{\mathbb{R}}$. 
We define a different kind of window categories from the ones in Subsection \ref{windowfirst}, called \textit{intrinsic window subcategories}
$\mathbb{W}_{\delta}^{\rm{int}} \subset D^b(\mathfrak{M})$, 
see~\cite[Definition~5.2.12, 5.3.12]{T}. These categories are the quasi-smooth version of ``magic window categories" from \cite{SVdB, hls}.

First, assume that 
there is a smooth affine scheme $Y$ with an action of 
a reductive algebraic group $G$, 
$V \to Y$ a $G$-equivariant vector bundle, and 
$s$ is a $G$-equivariant section of $V \to Y$ such that 
\begin{align}\label{present:M}
\mathfrak{M}=s^{-1}(0)/G.
\end{align}
Let $\mathcal{Y}=Y/G$, 
$i \colon \mathfrak{M} \hookrightarrow \mathcal{Y}$
the closed immersion, and $\mathscr{V} \to \mathcal{Y}$
the total space of $V/G \to Y/G$. 
In this case, we define 
$\mathbb{W}_{\delta}^{\rm{int}} \subset D^b(\mathfrak{M})$ to be 
consisting of 
$\mathcal{E} \in D^b(\mathfrak{M})$ such that 
for any map $\lambda \colon B\mathbb{C}^{\ast} \to \mathfrak{M}$ we have 
\begin{align*}
	\mathrm{wt}(\lambda^{\ast}i_{\ast}\mathcal{E}) 
	\subset \left[\frac{1}{2}\mathrm{wt}\left(\det \lambda^{\ast}(\mathbb{L}_{\mathscr{V}}|_{\mathcal{Y}})^{\lambda<0}
	\right), 
	\frac{1}{2}\mathrm{wt}\left(\det \lambda^{\ast}(\mathbb{L}_{\mathscr{V}}|_{\mathcal{Y}})^{\lambda>0}\right)
	  \right]
	  +\mathrm{wt}(\lambda^{\ast}\delta). 
	\end{align*}
	Here, by abuse of notation, we have also denoted the composition 
	$B\mathbb{C}^{\ast}\stackrel{\lambda}{\to} \mathfrak{M} \stackrel{i}{\hookrightarrow} \mathcal{Y}$
	by $\lambda$. 
The above subcategory $\mathbb{W}_{\delta}^{\rm{int}}$
is intrinsic to $\mathfrak{M}$, and independent of a 
choice of a presentation $\mathfrak{M}$ as (\ref{present:M})
for $(Y, V, s, G)$,  
see~\cite[Lemma~5.3.14]{T}. 

In general, 
the intrinsic window subcategory is defined as follows:

\begin{defn}(\cite[Definition~5.3.12]{T})\label{def:intwind}
	We define the subcategory 
	\begin{align*}
		\mathbb{W}_{\delta}^{\rm{int}} \subset D^b(\mathfrak{M})
		\end{align*}
	to be consisting of objects $\mathcal{E}$ satisfying the following: 
	for any étale morphism 
$U \to M$ such that $\mathfrak{M}_U$ is of the form $s^{-1}(0)/G$
as above with étale morphism $\iota_U \colon \mathfrak{M}_U \to \mathfrak{M}$, see Subsection~\ref{subsub:qsmooth}, 
we have 
$\iota_U^{\ast}\mathcal{E} \in \mathbb{W}_{\iota_U^{\ast}\delta}^{\rm{int}}
\subset D^b(\mathfrak{M}_U)$. 
\end{defn}

\section{Categorical wall-crossing of DT/PT quivers: review}
In this section, we review 
the categorical wall-crossing formula of DT/PT quivers 
obtained in~\cite{PT3}.

\subsection{DT/PT quivers}\label{sub:DTPTquivers}

A \textit{quiver} $Q=(I, E)$ is a direct graph with a set of vertices $I$ and a set of oriented edges $E$.
A \textit{super-potential} is a linear combination of cycles in $Q$. 

We discuss some examples of quivers which model the DT/PT wall-crossing for any curve class on a Calabi-Yau $3$-fold. These quivers have been studied in \cite{PT3}.
Let $Q=(I, E)$ be the \textit{triple loop quiver}, that is, the quiver with vertex set $I=\{1\}$ and edge set $E=\{x, y, z\}$:
\begin{align*}
Q 
\begin{tikzpicture}
\draw[->] (0, 0) arc (-180:0:0.4) ;
\draw (0.8, 0) arc (0:180:0.4);
\draw[->] (0, 0) arc (-180:0:0.6) ;
\draw (1.2, 0) arc (0:180:0.6);
\draw[->] (0, 0) arc (-180:0:0.8);
\draw (1.6, 0) arc (0:180:0.8);
\draw[fill=black] (0, 0) circle (0.1);
\end{tikzpicture}
\end{align*}

For $a\in \mathbb{N}$, 
let $Q^{af}=(J, E^{af})$ be the quiver with 
vertex set $J=\{0, 1\}$, and edge set $E^{af}$ containing 
three loops $E=\{x, y, z\}$ at $1$, $(a+1)$-edges from $0$ to $1$, 
and $a$-edges from $1$ to $0$. 
A \textit{DT/PT quiver} is a quiver of the form $Q^{af}$ for some $a\in \mathbb{N}$, 
see the following picture for $a=2$: 

\begin{align*}
	Q^{2f}
	\begin{tikzpicture}			
			\draw[->_>_>, >={Latex[round]}] 	
			(-4, 0) to [bend left=30] (0, 0);
					\draw[->_>, >={Latex[round]}] 	
				(0, 0) to [bend left=30] (-4, 0);
				\draw[->, >={Latex[round]}] (0, 0) arc (-180:0:0.4) ;
		\draw (0.8, 0) arc (0:180:0.4);
		\draw[->, >={Latex[round]}] (0, 0) arc (-180:0:0.6) ;
		\draw (1.2, 0) arc (0:180:0.6);
		\draw[->, >={Latex[round]}] (0, 0) arc (-180:0:0.8);
		\draw (1.6, 0) arc (0:180:0.8);
		\draw[fill=black] (0, 0) circle (0.05);
		\draw[fill=black] (-4, 0) circle (0.05);
	\end{tikzpicture}
	\end{align*}

For $d\in \mathbb{N}$, let $V$ be a $\mathbb{C}$-vector space of dimension $d$. 
We often write $GL(d)$ as $GL(V)$. 
Its Lie algebra is 
denoted by $\mathfrak{gl}(d)=\mathfrak{gl}(V):=\text{End}(V)$. 
When the dimension is clear from the context, we drop $d$ from its notation
and write it as $\mathfrak{g}$. 
Consider the $GL(V)\cong GL(d)$ representations:
\begin{align*}
R(d)&:=\mathfrak{gl}(V)^{\oplus 3},\ 
    R^{af}(1,d):=V^{\oplus \left(a+1\right)}\oplus \left(V^{\vee}\right)^{\oplus a}\oplus \mathfrak{gl}(V)^{\oplus 3}.
\end{align*}
Define the following stacks:
\begin{align*}
\X(d):=R(d)/GL(d),\
\mathcal{X}^{af}(1, d):=R^{af}(1, d)/GL(d).
\end{align*}

\subsection{Quasi-BPS categories for \texorpdfstring{$\mathbb{C}^3$}{C3}}\label{sec:quasiBPS}

\subsubsection{Notations involving weights}\label{subsub:const}

Recall the triple loop quiver $Q$ with moduli stack of representation \[\X(d)=R(d)/GL(d)=\mathfrak{gl}(d)^{\oplus 3}/GL(d)\] of dimension $d$. Let $T(d)$ be the maximal torus of $GL(d)$, let $M(d)$ be its weight space, and let $M(d)_{\mathbb{R}}:=M(d)\otimes_\mathbb{Z}\mathbb{R}$. Denote by $\beta_1,\ldots, \beta_d$ the simple roots of $T(d)$. We choose the dominant chamber of $M(d)$ such that a weight $\chi=\sum_{i=1}^d c_i\beta_i$ is dominant if $c_1\leq\cdots\leq c_d.$
We also define the following elements in $M(d)_{\mathbb{R}}$:
\begin{align*}
\rho:=\frac{1}{2}\sum_{i>j}(\beta_i-\beta_j), \ 
    \tau_d :=\frac{1}{d}\sum_{i=1}^d \beta_i. 
\end{align*}
Note that $\rho$ is half the sum of positive roots of $GL(d)$. 

For later use, we also generalize the above notation and construction. 
Let $d=d^{(1)}+\cdots+d^{(m)}$ and set
$G=\prod_{j=1}^m GL(d^{(j)})$. 
Its maximal torus is $T(d)=\prod_{j=1}^m T(d^{(j)})$, and we
denote the simple roots of $T(d^{(j)})$ by 
$\beta_1^{(j)}, \ldots, \beta_{d^{(j)}}^{(j)}$. 
We write $\chi \in M(d)_{\mathbb{R}}$ as
\begin{align}\label{decom:chij}
\chi=\sum_{j=1}^m \chi^{(j)} \in \bigoplus_{j=1}^m M(d^{(j)})_{\mathbb{R}}.
\end{align}
We say that $\chi$ is $G$-dominant if each $\chi^{(j)}$ is dominant. 
Let $\rho^{(j)}$ be half the sum of positive roots of $GL(d^{(j)})$. 
We set $\rho_G=\sum_{j=1}^m \rho^{(j)}$, which is half the sum of 
positive roots of $G$. 

\subsubsection{The \v{S}penko--Van den Bergh construction}
We explain a construction of (twisted) noncommutative 
resolutions of singularities $\mathbb{M}(d)_w$ of the coarse quotient space $R(d)\ssslash GL(d)=\mathfrak{gl}(d)^{\oplus 3}\ssslash GL(d)$ due to \v{S}penko--Van den Bergh \cite{SVdB}.
Define the polytope $\textbf{W}(d)$ as
\begin{equation}\label{W}
    \textbf{W}(d):=\frac{3}{2}\text{sum}[0, \beta_i-\beta_j]+\mathbb{R}\tau_d\subset M(d)_{\mathbb{R}},
    \end{equation}
where the Minkowski sum is after all $1\leq i, j\leq d$. 
For $w\in\mathbb{Z}$,
consider the hyperplane: 
\begin{equation}\label{W0}
    \textbf{W}(d)_w:=\frac{3}{2}\text{sum}[0, \beta_i-\beta_j]+w\tau_d\subset \textbf{W}(d).
    \end{equation}
For $w \in \mathbb{Z}$, denote by $D^b(\mathcal{X}(d))_w$
the subcategory of $D^b(\mathcal{X}(d))$
consisting of objects of
weight $w$ with respect to the diagonal 
cocharacter $1_d$ of $T(d)$.  
There is a direct sum decomposition 
\begin{align*}
    D^b(\mathcal{X}(d))=\bigoplus_{w\in \mathbb{Z}}
    D^b(\mathcal{X}(d))_w. 
\end{align*}
We define the dg-subcategories 
\begin{align*}
    \mathbb{M}(d) \subset D^b(\X(d)), \ 
    (\mbox{resp. }
    \mathbb{M}(d)_w \subset D^b(\X(d))_w)
\end{align*}
to be generated 
by the vector bundles $\OO_{\X(d)}\otimes \Gamma_{GL(d)}(\chi)$, where $\chi$ is a dominant weight of $T(d)$ such that
\begin{equation}\label{M}
    \chi+\rho\in \textbf{W}(d), \ 
    (\mbox{resp. } \chi+\rho \in \textbf{W}(d)_w). 
    \end{equation}
    Note that $\mathbb{M}(d)$
    decomposes into the direct sum of $\mathbb{M}(d)_w$
    for $w \in \mathbb{Z}$. 
Alternatively, the category $\mathbb{M}(d)_w$ 
is generated by the vector bundles $\OO_{\X(d)} \otimes \Gamma$
for $\Gamma$ a $GL(d)$-representation such that 
the $T(d)$-weights of $\Gamma$ are contained in the set
$\nabla_w$ defined by:
\begin{align}\label{def:nabla}
 \nabla_{w} &:=
    \left\{\chi \in M_{\mathbb{R}} \,\Big| -\frac{1}{2}
    \eta_{\lambda} \leq \langle \lambda, \chi \rangle 
    \leq \frac{1}{2}\eta_{\lambda} \mbox{ for all } \lambda \colon \mathbb{C}^* \to T(d) \right\} +w\tau_d, 
    \end{align}
   where $\eta_{\lambda}:=\langle \lambda, \mathbb{L}_{\mathcal{X}(d)}^{\lambda>0}\rangle=2\langle \lambda, \mathfrak{gl}(d)^{\lambda>0}\rangle$,
    see~\cite[Lemma~2.9]{hls}.

\subsubsection{Quasi-BPS categories via matrix factorizations}
    Consider the regular function
    \begin{align}\label{TrWd}\mathrm{Tr}\,W_d\colon \X(d)\to \mathbb{C},\, (X, Y, Z)\mapsto \mathrm{Tr}\left(X[Y, Z]\right)
    \end{align}
    induced by the potential $W=x[y, z]=xyz-xzy$ of the triple loop quiver $Q$.
Let
\[\mathbb{S}(d):=\text{MF}(\mathbb{M}(d), \Tr W_d)
\subset \mathrm{MF}(\mathcal{X}(d), \Tr W_d)
\]  be the subcategory of matrix factorizations
$\left(\alpha \colon F\rightleftarrows G\colon \beta\right)$ with $F$ and $G$ in $\mathbb{M}(d)$.
It decomposes into the direct sum of 
$\mathbb{S}(d)_w$ for $w \in \mathbb{Z}$, 
where $\mathbb{S}(d)_w$
is defined similarly to $\mathbb{S}(d)$
using $\mathbb{M}(d)_w$. 
We also consider analogous categories of graded and/or equivariant matrix factorizations
\[\mathbb{S}^{\bullet}_{\ast}(d):=
\text{MF}^{\bullet}_{\ast}(\mathbb{M}(d), \Tr W_d)
\subset \text{MF}^{\bullet}_{\ast}(\X(d), \Tr W_d)
\] 
for $\ast \in \{\emptyset, T\}$, 
$\bullet \in \{\emptyset, \text{gr}\}$, 
where $T=(\mathbb{C}^{\ast})^2$ acts on $\mathbb{C}^3$
by $(t_1, t_2)(x, y, z)=(t_1 x, t_2 y, t_1^{-1}t_2^{-1}z)$
and the grading is given by the weight two $\mathbb{C}^{\ast}$-action on $Z$. 
    
\subsubsection{Quasi-BPS categories via the Koszul equivalence}\label{subsub:qBPSK}    
    \label{subsec:qBPS}
Let $V$ be a $d$-dimensional complex vector space 
and denote by $\mathfrak{g}=\Hom(V, V)$ the Lie algebra of $GL(V)$. 
We set 
\begin{align*}
	\mathcal{Y}(d):=\mathfrak{g}^{\oplus 2}/GL(V),
	\end{align*}
where $GL(V)$ acts on $\mathfrak{g}$ by conjugation. 
The stack $\mathcal{Y}(d)$ is the moduli stack of 
representations of dimension $d$ of the quiver with one vertex and two loops. 
Let $\mu$ be the morphism 
\begin{align}\label{mor:s}
\mu \colon \mathcal{Y}(d) \to \mathfrak{g}, \ (X, Y) \mapsto [X, Y].
\end{align}
The morphism $\mu$ induces a morphism of vector bundles $\partial: \mathfrak{g}^{\vee}\otimes\mathcal{O}_{\mathfrak{g}^{\oplus 2}}\to \mathcal{O}_{\mathfrak{g}^{\oplus 2}}$.
Let $\mu^{-1}(0)$ be the derived scheme with the dg-ring of regular functions
\begin{align}\label{diff:ds2}
\mathcal{O}_{\mu^{-1}(0)}:=\mathcal{O}_{\mathfrak{g}^{\oplus 2}}\left[\mathfrak{g}^{\vee}\otimes\mathcal{O}_{\mathfrak{g}^{\oplus 2}}[1]; d_{\mu}\right],
\end{align}
where the differential $d_\mu$ is induced by the morphism $\partial$. Consider the (derived) stack 
\begin{equation}\label{def:cd}
i \colon \mathscr{C}(d):=\mu^{-1}(0)/GL(V) \hookrightarrow \mathcal{Y}(d).
\end{equation}
For $v\in\mathbb{Z}$,
define the full dg-subcategory 
$\widetilde{\mathbb{T}}(d)_v\subset D^b(\mathcal{Y}(d))$
generated by the vector bundles $\mathcal{O}_{\mathcal{Y}(d)}\otimes \Gamma_{GL(d)}(\chi)$
for a dominant weight $\chi$ satisfying
$\chi+\rho \in \textbf{W}(d)_{v}$. 
Define the full dg-subcategory 
\begin{align}\label{def:N}
	\mathbb{T}(d)_v \subset D^b(\mathscr{C}(d))
	\end{align}
with objects $\mathcal{E}$ such that 
 $i_{\ast}\mathcal{E}$ is in $\widetilde{\mathbb{T}}(d)_v$.
 
    Consider the grading induced by the action of $\mathbb{C}^*$ on $\X(d)$ scaling the linear map corresponding to $Z$ with weight $2$.
By Koszul duality in Theorem~\ref{thm:Kduality}, there is an equivalence
\begin{align}\label{equiv:Phi2}
	\Phi \colon D^b(\mathscr{C}(d)) \stackrel{\sim}{\to}
	\mathrm{MF}^{\mathrm{gr}}(\mathcal{X}(d), \Tr W_d). 
	\end{align}
	The above equivalence restricts to the equivalence, 
	see~\cite[Equation (4.22)]{PT0}:
	\begin{align}\label{equiv:ST}
	    \Phi\colon \mathbb{T}(d)_v\stackrel{\sim}{\to} \mathbb{S}^{\mathrm{gr}}(d)_v. 
	\end{align}

    \subsection{The local categorical DT/PT correspondence}
Let $Q^{af, N}$ be the quiver obtained by adding $N$-loops at the vertex $0$
of the DT/PT quiver $Q^{af}$. 
Then the moduli stack of representations of $Q^{af, N}$ 
of dimension $(1, d)$ is 
\begin{align*}
    \mathcal{X}^{af, N}(1, d)=\mathbb{C}^N \times \mathcal{X}^{af}(1, d)
    =(\mathbb{C}^N \times R^{af}(1, d))/GL(d) 
\end{align*}
where $GL(d)$ acts on $\mathbb{C}^N$ trivially. 
Let $\widetilde{W}$ be a super-potential on $Q^{af, N}$ satisfying \[\widetilde{W}|_{Q}=X[Y, Z],\] where 
$Q \subset Q^{af, N}$ is the full subquiver consisting 
of the vertex $\{1\}$, i.e. it is the triple loop quiver. 
Then we have the function 
\begin{align*}
    \Tr \widetilde{W}_d \colon \mathcal{X}^{af, N}(1, d) \to \mathbb{C}. 
\end{align*}

Let $\lambda \colon \mathbb{C}^{\ast} \to T(d)$ be the cocharacter given by 
\begin{align*}
    \lambda(t)=(\overbrace{t^k, \ldots, t^k}^{d_1}, \ldots, \overbrace{t, \ldots, t}^{d_k}, \overbrace{1, \ldots, 1}^{d'}). 
\end{align*}
We have the diagram of attracting and fixed stacks 
\begin{align}\label{dia:XfN}
    \mathcal{X}^{af, N}(1, d)^{\lambda} \stackrel{q_{\lambda}}{\leftarrow} 
    \mathcal{X}^{af, N}(1, d)^{\lambda \geq 0} \stackrel{p_{\lambda}}{\to}
    \mathcal{X}^{af, N}(1, d)
\end{align}
where the fixed stack is 
\begin{align*}
\mathcal{X}^{af, N}(1, d)^{\lambda}=\prod_{i=1}^k \mathcal{X}(d_i) \times 
\mathcal{X}^{af, N}(1, d'). 
\end{align*}
The pull-back/push-forward with respect to the diagram (\ref{dia:XfN})
gives the categorical Hall product for DT/PT quivers with super-potentials, see~\cite[Section~3]{P0}: 
\begin{align}\label{cat:Hall:M}
    \ast=q_{\lambda \ast}p_{\lambda}^{\ast} \colon 
    \boxtimes_{i=1}^k \mathrm{MF}(\mathcal{X}(d_i), \Tr W_{d_i}) \boxtimes &
    \,\mathrm{MF}(\mathcal{X}^{af, N}(1, d'), \Tr \widetilde{W}_{d'}) \\
    &\notag \to 
     \mathrm{MF}(\mathcal{X}^{af, N}(1, d), \Tr \widetilde{W}_d). 
\end{align}

Let $\delta=\mu \chi_0=\mu\sum_{i=1}^d \beta_i$ for $\mu \in \mathbb{R}$. We  define the 
subcategories 
\begin{align*}
    &\mathbb{E}^a(1, d; \delta) \subset \mathrm{MF}(\mathcal{X}^{af, N}(1, d), \Tr \widetilde{W}_d), \\  
    &\mathbb{F}^a(1, d; \delta) \subset \mathrm{MF}(\mathcal{X}^{af, N}(1, d), \Tr \widetilde{W}_d)
    \end{align*}
    of matrix factorizations whose factors are generated by  
    $\Gamma_{GL(d)}(\chi) \otimes \mathcal{O}_{\mathcal{X}^{af, N}(1, d)}$
    for $\chi$ a dominant weight of $T(d)$ satisfying 
    \begin{align*}
        &\chi+\rho+\delta \in \frac{3}{2}\mathrm{sum}[0, \beta_i-\beta_j]+
        \frac{a}{2}\mathrm{sum}[-\beta_k, \beta_k]+\mathrm{sum}[-\beta_k, 0], \\
        &\chi+\rho+\delta \in \frac{3}{2}\mathrm{sum}[0, \beta_i-\beta_j]+
        \frac{a}{2}\mathrm{sum}(-\beta_k, \beta_k]
    \end{align*}
    respectively, where $1\leq i,j,k\leq d$. 
    
   	Let $R$ be the following set \[R=\{(d_i, v_i)_{i=1}^k\mid d_i\in \mathbb{N}, v_i \in \mathbb{Z}\}.\] 
	We define a subset $O \subset R \times R$ which will be used to compare summands in the semiorthogonal decomposition of Theorem~\ref{thm:locDTPT} and 
	Theorem \ref{thm:main}, see Subsection \ref{notation}. 
	This order is a particular example of an order from \cite[Subsection 3.2.3]{PT3}, which depends on a choice of $\mu\in\mathbb{R}$. 
	
	\begin{defn}\label{def:order}
	A pair of elements $S=(d_i, v_i)_{i=1}^k$
	and $S'=(d_i', v_i')_{i=1}^{k'}$
	in $R$
	is an element of $O \subset R \times R$ if either 
	\begin{itemize}
	    \item $\sum_{i=1}^k v_i >\sum_{i=1}^{k'} v_i'$, or 
	    \item $\sum_{i=1}^k v_i =\sum_{i=1}^{k'} v_i'$
	    and $\sum_{i=1}^k d_i<\sum_{i=1}^{k'} d_i'$, or 
	    \item $\sum_{i=1}^k v_i =\sum_{i=1}^{k'} v_i'$,
	    and $\sum_{i=1}^k d_i=\sum_{i=1}^{k'} d_i'$, and $(S, S')$
	    is in the set $O$ from~\cite[Subsection~3.4]{PT0}. 
	\end{itemize} 
	\end{defn}
        The following is the local categorical DT/PT correspondence in terms of the 
    above subcategories, see~\cite[Proposition~3.12, Corollary~3.14]{PT3}: 
    \begin{thm}\label{thm:locDTPT}
        Suppose that $2 \mu l \notin \mathbb{Z}$ for $1\leq l \leq d$. 
        There is a semiorthogonal decomposition 
        \begin{align*}
            \mathbb{E}^a(1, d; \delta)=\left\langle \boxtimes_{i=1}^k 
            \mathbb{S}(d_i)_{w_i} \boxtimes \mathbb{F}(1, d'; \delta') \right\rangle. 
        \end{align*}
              The right hand side is after all $d'\leq d$, partitions $(d_i)_{i=1}^k$ of $d-d'$, and integers $(w_i)_{i=1}^k$ such that for \begin{equation}\label{vwtransform}
                  v_i:=w_i+d_i\left(d'+\sum_{j>i} d_j-\sum_{j<i}d_j\right), \end{equation}
              we have
    \begin{equation}\label{equationboundmu}
    -1-\mu-\frac{a}{2}< \frac{v_1}{d_1}<\cdots<\frac{v_k}{d_k}< -\mu-\frac{a}{2}.
\end{equation}
In the above, we let $\delta':=\left(\mu-d+d'\right)\sum_{i>d-d'}^d\beta_i$.
Moreover, each fully-faithful functor 
\[\boxtimes_{i=1}^k 
            \mathbb{S}(d_i)_{w_i} \boxtimes \mathbb{F}(1, d'; \delta')
            \to \mathbb{E}^a(1, d; \delta)\]
            is given by the restriction of the categorical Hall product~\eqref{cat:Hall:M}. 
            
    The order of the semiorthogonal summands is 
       discussed in \cite[Subsection 3.2.3]{PT3} and depends on $\mu\in\mathbb{R}$. 
    For $0<\varepsilon\ll 1$, let $\mu=-a/2-\varepsilon$. Then the order is
     that of $(d_i, v_i)_{i=1}^k \in R$
    in Definition~\ref{def:order}, see also Subsection \ref{notation}. 
    \end{thm}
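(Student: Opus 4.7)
The plan is to establish the semiorthogonal decomposition by combining the window/Kempf-Ness framework with the categorical Hall product for the quiver with potential $(Q^{af,N}, \widetilde{W})$. The starting observation is that both $\mathbb{E}^a(1,d;\delta)$ and $\mathbb{F}^a(1,d;\delta)$ are defined as subcategories of $\mathrm{MF}(\mathcal{X}^{af,N}(1,d), \mathrm{Tr}\,\widetilde{W}_d)$ cut out by polytopic conditions on $T(d)$-weights, and the defining polytope for $\mathbb{E}^a$ is a Minkowski sum of that for $\mathbb{F}^a$ with the extra summand $\mathrm{sum}[-\beta_k, 0]$. The ``extra'' weights produced in this way correspond exactly to those weights reachable by pushforward along the attracting-stack diagram~\eqref{dia:XfN} for diagonal cocharacters $\lambda$ associated to ordered partitions $d = d_1 + \cdots + d_k + d'$, which suggests the shape of the Hall-product decomposition.

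I would first construct the fully-faithful embeddings $\boxtimes_i \mathbb{S}(d_i)_{w_i} \boxtimes \mathbb{F}^a(1, d'; \delta') \hookrightarrow \mathbb{E}^a(1, d; \delta)$ via the categorical Hall product~\eqref{cat:Hall:M}. The category $\mathbb{S}(d_i)_{w_i}$ is generated by matrix factorizations whose factors are $\mathcal{O} \otimes \Gamma_{GL(d_i)}(\chi)$ for dominant $\chi$ with $\chi+\rho \in \mathbf{W}(d_i)_{w_i}$, and pushing these forward under $p_\lambda$ shifts the $T(d)$-weights by $\langle \lambda, \mathbb{L}^{\lambda>0}\rangle$ of the ambient stack. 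This is precisely the shift encoded by~\eqref{vwtransform} between the intrinsic weight $w_i$ and the extrinsic weight $v_i$; a polytope computation then shows the shifted support lies inside the $\mathbb{E}^a$-window exactly when the slopes satisfy~\eqref{equationboundmu}. The $a/2$ shift in the bounds comes from the $V^{\oplus a+1} \oplus (V^\vee)^{\oplus a}$ contribution to $\mathbb{L}^{\lambda>0}$, while the $\mu$-shift comes from the twist $\delta$; the hypothesis $2\mu l \notin \mathbb{Z}$ ensures every inequality in~\eqref{equationboundmu} is strict, avoiding boundary summands.

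Semiorthogonality between two distinct summands indexed by $(S, S') \in O$ is then reduced, following the template of~\cite{SVdB, hls}, to the existence of a destabilizing cocharacter whose weight pairing strictly separates the supports of the two summands. By construction of Definition~\ref{def:order} — first by total weight $\sum v_i$, then by total dimension $\sum d_i$, and finally by the refined order inherited from~\cite[Subsection~3.4]{PT0} — such a separating cocharacter always exists, and the induced weight vanishing kills $\mathrm{Hom}$ on generators by the standard Borel--Weil--Bott / Weyl-character argument on $\Gamma_{GL(d)}(\chi)$.

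The main obstacle, and by far the most delicate step, is proving that these Hall-product subcategories span all of $\mathbb{E}^a(1,d;\delta)$. I would argue by induction on $d - d'$ using the $\Theta$-stratification of the $\delta$-unstable locus in $\mathcal{X}^{af,N}(1,d)$: at each stage one identifies the lowest unstable Kempf-Ness stratum, applies the window/Koszul equivalence~\eqref{jequiv2} together with the baric decomposition on the stratum's normal bundle, and mutates any object of the larger window into a Hall product of SVdB generators (which become quasi-BPS generators after matrix factorization) plus an object of a strictly smaller window. The extra legs of $Q^{af,N}$ beyond the triple loop quiver $Q$ contribute only to the weights $\eta_\lambda$ and to the change-of-variables~\eqref{vwtransform}, so that modulo this weight bookkeeping the argument essentially reduces to the $\mathbb{C}^3$ case treated in~\cite[Section 3]{PT0}.
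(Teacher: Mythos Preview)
The paper does not actually prove this theorem: it is stated with the attribution ``see~\cite[Proposition~3.12, Corollary~3.14]{PT3}'' and then used as a black box in the proof of Theorem~\ref{thm:windowdec}. So there is no proof in the present paper to compare your proposal against; the result is imported wholesale from the companion paper~\cite{PT3}.

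That said, your sketch is broadly in line with how the argument runs in~\cite{PT3}: the fully-faithful functors are Hall products, the weight shift~\eqref{vwtransform} does arise from the discrepancy between intrinsic weights on the fixed locus and extrinsic weights after pushforward along the attracting diagram, semiorthogonality is a weight-separation argument of \v{S}penko--Van~den~Bergh/Halpern-Leistner--Sam type, and generation proceeds by stripping off Kempf--Ness strata inductively. Two caveats are worth flagging. First, the generation step is not a simple reduction to the $\mathbb{C}^3$ case from~\cite{PT0}: the presence of the framing legs $V^{\oplus(a+1)}\oplus(V^\vee)^{\oplus a}$ genuinely changes the combinatorics of which dominant weights lie in the $\mathbb{E}^a$-polytope versus the $\mathbb{F}^a$-polytope, and in~\cite{PT3} this is handled by a careful inductive resolution argument specific to the framed quiver rather than by invoking~\cite{PT0} directly. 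Second, the order on semiorthogonal summands is not something you can read off from a single separating cocharacter in general; in~\cite{PT3} it is obtained from the full Harder--Narasimhan combinatorics of the $\Theta$-stratification, and the specialization to the order in Definition~\ref{def:order} for $\mu=-a/2-\varepsilon$ requires tracking how the strata degenerate as $\mu$ crosses walls. Your outline is correct in spirit but underestimates the bookkeeping in both places.
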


The analogous conclusion holds if we replace $\mathbb{C}^N$ by an open subset in all the constructions and statements above, and also for graded categories of matrix factorizations. 

\begin{remark}
One also obtains similar semiorthogonal decompositions for different super-potentials $\widetilde{W}$. Assume $N=0$ and $\widetilde{W}=0$.
Let $\chi_0 \colon GL(V) \to \mathbb{C}^{\ast}$ be the determinant character
$g \mapsto \det g$, 
and define the following DT and PT spaces for 
the quivers $Q^{af}$
defined by the GIT quotient stacks (which are smooth quasi-projective varieties):
\begin{align*}
    I^a(d):=\mathcal{X}^{af}(1, d)^{\chi_0\text{-ss}},\
    P^a(d):=\mathcal{X}^{af}(1, d)^{\chi^{-1}_0\text{-ss}}.
\end{align*}
Using the window theorem, we obtain the local categorical DT/PT correspondence \cite[Theorem 1.1]{PT3}, which says that there is a semiorthogonal decomposition
\[D^b(I^a(d))=\Big\langle \boxtimes_{i=1}^k \mathbb{M}(d_i)_{w_i}\boxtimes D^b(P^a(d'))\Big\rangle,\]
where the right hand side is indexed as in Theorem \ref{thm:locDTPT}. 
\end{remark}

\section{Quasi-BPS categories for points on a surface}
In this section, we introduce quasi-BPS categories for points on surfaces
and study their properties. 

\subsection{The definition of quasi-BPS category}\label{subquasi-BPS}
Let $S$ be a smooth surface. We denote by 
 $\mathfrak{M}_S(d)$ the derived moduli stack of 
 zero-dimensional sheaves on $S$ of length $d$. 
 The derived stack $\mathfrak{M}_S(d)$ is quasi-smooth, 
 and its classical truncation $\mathcal{M}_S(d)=t_0(\mathfrak{M}_S(d))$
 admits a good moduli space 
 \begin{align*}
 	\mathcal{M}_S(d) \to \mathrm{Sym}^d(S)
 	\end{align*}
 by sending a zero-dimensional sheaf to its support. 
 Moreover, the stack $\mathfrak{M}_S(d)$ is
 symmetric, see~\cite[Lemma~5.4.1]{T}. 
 
 Let $\mathcal{Q}\in D^b(S \times \mathfrak{M}_S(d))$ be the universal zero-dimensional sheaf. 
 We define the following line bundle on $\mathfrak{M}_S(d)$:
 \begin{align*}
 	\mathscr{L}:= \mathrm{det}(p_{\mathfrak{M}\ast}\mathcal{Q}) 
 	\in \mathrm{Pic}(\mathfrak{M}_S(d)),
 	\end{align*}
 	where $p_{\mathfrak{M}} \colon S \times \mathfrak{M}_S(d) \to \mathfrak{M}_S(d)$ is
  the natural projection. 
 	Recall the definition of an intrinsic window subcategory for 
 	a quasi-smooth and symmetric derived stack, see Definition~\ref{def:intwind}. 
 We define the quasi-BPS category as follows: 
 \begin{defn}
 	For $(d, w) \in \mathbb{N} \times \mathbb{Z}$, 
 	we define the quasi-BPS category 
 	$\mathbb{T}_X(d)_w \subset D^b(\mathfrak{M}_S(d))$ by 
 	\begin{align*}
 		\mathbb{T}_X(d)_w:=\mathbb{W}_{\delta=w \mathscr{L}/d}^{\rm{int}}
 		\subset D^b(\mathfrak{M}_S(d)). 
 		\end{align*}
 	\end{defn}
 We see that the above definition coincides with the 
 one introduced in~\cite{P2}. 
 Let us take a point $p=\sum_{j=1}^m d^{(j)}x^{(j)} \in \mathrm{Sym}^d(S)$ for distinct 
 points $x^{(1)}, \ldots, x^{(m)} \in S$, and denote by 
 $\widehat{\mathfrak{M}}_S(d)_p$ the formal fiber of $\mathfrak{M}_S(d)$ at $p$.
 Let $y^{(1)}, \ldots, y^{(m)} \in \mathbb{C}^2$ be distinct points 
 and set $q=\sum_{j=1}^m d^{(j)}y^{(j)} \in \mathrm{Sym}^d(\mathbb{C}^2)$. 
 Then there is an equivalence of derived stacks 
 \begin{align*}
 	\iota_p \colon \widehat{\mathscr{C}}(d)_q \stackrel{\sim}{\to}
 	\widehat{\mathfrak{M}}_S(d)_p.
 	\end{align*}
 Let $\mathbb{T}(d)_w \subset D^b(\mathscr{C}(d))$ be the quasi-BPS category 
 from Subsection~\ref{subsub:qBPSK} and define 
 $\widehat{\mathbb{T}}(d)_{w, q} \subset D^b(\widehat{\mathscr{C}}(d)_q)$
 to be the subcategory split generated by 
 $\mathbb{T}(d)_{w}|_{\widehat{\mathscr{C}}(d)_q}$. 
 The following lemma
 shows that 
 $\mathbb{T}_X(d)_w$ coincides with the 
 one defined in~\cite[Section~4.1.6]{P2}:
 	\begin{lemma}\label{lem:compare}
 		An object $\mathcal{E} \in D^b(\mathfrak{M}_S(d))$ 
 		is an object in $\mathbb{T}_X(d)_w$ if and only if 
 		for any $p \in \mathrm{Sym}^d(S)$, we have 
 		$\iota_p^{\ast}\mathcal{E}|_{\widehat{\mathfrak{M}}_S(d)_p} \in \widehat{\mathbb{T}}(d)_{w, q}$. 
 		 		\end{lemma}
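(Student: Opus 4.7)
\begin{pf}
My plan is to reduce the statement to a formal local comparison and then match the intrinsic weight condition against the polytope condition defining $\mathbb{T}(d)_w$ term-by-term. By Definition~\ref{def:intwind}, membership in $\mathbb{W}_{\delta}^{\rm{int}} = \mathbb{T}_X(d)_w$ is checked after pulling back along \'etale charts $\mathfrak{M}_U \to \mathfrak{M}_S(d)$ where $\mathfrak{M}_U$ is a Koszul stack of the form $s^{-1}(0)/G$. Since $\mathcal{M}_S(d) \to \mathrm{Sym}^d(S)$ is a good moduli space of a quasi-smooth symmetric stack (see~\cite[Lemma~5.4.1]{T}), the formal neighborhood theorem provides such charts around every closed point $p \in \mathrm{Sym}^d(S)$, and it suffices to verify the equivalence of the two conditions formally locally at each $p$.

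For $p = \sum_{j=1}^m d^{(j)} x^{(j)}$, the deformation theory of the polystable sheaf $\bigoplus_j \mathcal{O}_{x^{(j)}}^{\oplus d^{(j)}}$ on $S$ depends only on the formal neighborhoods of the $x^{(j)}$, which are isomorphic to those of $y^{(j)} \in \mathbb{C}^2$. This is exactly how the equivalence $\iota_p \colon \widehat{\mathscr{C}}(d)_q \xrightarrow{\sim} \widehat{\mathfrak{M}}_S(d)_p$ is constructed, with automorphism group $G_p = \prod_j GL(d^{(j)})$. Under this equivalence, the universal sheaf $\mathcal{Q}$ corresponds to the universal representation on $\mathscr{C}(d)$, so the line bundle $\mathscr{L} = \det p_{\mathfrak{M}\ast}\mathcal{Q}$ corresponds to the determinant line bundle on $\mathscr{C}(d)$, whose character is $\chi_0 = \sum_{i=1}^d \beta_i$ at the level of the maximal torus. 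Consequently $\delta = w\mathscr{L}/d$ corresponds to $w\tau_d$.

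Next, I translate the two window conditions into the same language. On the $\mathscr{C}(d)$ side, $\widetilde{\mathbb{T}}(d)_v$ is generated by $\mathcal{O}_{\mathcal{Y}(d)} \otimes \Gamma_{GL(d)}(\chi)$ with $\chi + \rho_G \in \mathbf{W}(d)_v$, and by the characterization after~\eqref{def:nabla} this is equivalent to requiring that the $T$-weights of the generating representations lie in the magic window $\nabla_w$. On the $\mathfrak{M}_S(d)$ side, the intrinsic condition for a cocharacter $\lambda \colon \mathbb{C}^\ast \to G_p$ reads
\[
\mathrm{wt}(\lambda^\ast i_\ast\mathcal{E}) \subset \tfrac{1}{2}\bigl[-\eta_\lambda, \eta_\lambda\bigr] + w\langle\lambda,\tau_d\rangle,
\]
where the ambient smooth stack $\mathscr{V}$ has tangent weights $\mathfrak{g}^{\oplus 3}$ (two copies from $\mathcal{Y}(d)$ and one from the total space of $\mathfrak{g}$), giving $\eta_\lambda = 3\langle\lambda,\mathfrak{g}^{\lambda>0}\rangle$; this is precisely the width of the polytope $\mathbf{W}(d)$ in the $\lambda$-direction.

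Finally, combining the computation with the observation that $\widehat{\mathbb{T}}(d)_{w,q}$ is the split-closure of $\mathbb{T}(d)_w|_{\widehat{\mathscr{C}}(d)_q}$ (which equals the corresponding intrinsic window on the formal fiber by the \v{S}penko--Van den Bergh characterization), the two conditions on $\iota_p^\ast\mathcal{E}$ coincide. The main technical obstacle is the weight bookkeeping in the third step, namely verifying that the shift by $\rho_G$ appearing in the polytope condition matches the symmetric interval $[-\eta_\lambda/2, \eta_\lambda/2]$ of the intrinsic window, and that the contribution of the Koszul differential does not alter the width; both are settled by the standard identity $\det \mathfrak{g}^{\lambda>0} = 2\langle\lambda,\rho_G\rangle$ and the fact that $\mathbb{L}_{\mathscr{V}}|_{\mathcal{Y}}$ has the same $\lambda$-weighted determinant as $\mathfrak{g}^{\oplus 3}$ in the relevant direction.
\end{pf}
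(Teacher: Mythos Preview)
Your overall strategy---reduce to formal fibers, transport along $\iota_p$, identify $\mathscr{L}$ with the determinant character $\chi_0$, and then compare the intrinsic window inequality with the polytope condition defining $\mathbb{T}(d)_w$---is exactly the paper's approach. The first two paragraphs are fine.

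The computation in your third paragraph is wrong, and the last paragraph's attempted patch compounds the error. The ambient stack $\mathscr{V}=\mathfrak{g}^{\oplus 3}/GL(d)$ is a \emph{quotient} stack, so its cotangent complex is the two-term complex $(\mathfrak{g}^{\oplus 3})^{\vee}\to\mathfrak{g}^{\vee}$ in degrees $[0,1]$, not simply $(\mathfrak{g}^{\oplus 3})^{\vee}$. Taking the $\lambda>0$ part and the determinant (alternating over degrees) gives
\[
\tfrac{1}{2}\,\mathrm{wt}\big(\det(\mathbb{L}_{\mathscr{V}})^{\lambda>0}\big)
=\tfrac{1}{2}\big(3\langle\lambda,\mathfrak{g}^{\lambda>0}\rangle-\langle\lambda,\mathfrak{g}^{\lambda>0}\rangle\big)
=\langle\lambda,\mathfrak{g}^{\lambda>0}\rangle,
\]
not $\tfrac{3}{2}\langle\lambda,\mathfrak{g}^{\lambda>0}\rangle$. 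Your claim that ``$\mathbb{L}_{\mathscr{V}}|_{\mathcal{Y}}$ has the same $\lambda$-weighted determinant as $\mathfrak{g}^{\oplus 3}$'' is false precisely because you dropped the degree-one Lie algebra term.

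This matters because the intrinsic window inequality constrains the $T(d)$-weights of $i_{\ast}\mathcal{E}$, hence of the generating representations $\Gamma$, so the correct comparison is with the polytope $\nabla_w$ of \eqref{def:nabla} (half-width $\langle\lambda,\mathfrak{g}^{\lambda>0}\rangle$), not with $\mathbf{W}(d)_w$ (which bounds the highest weight $\chi+\rho$ and has half-width $\tfrac{3}{2}\langle\lambda,\mathfrak{g}^{\lambda>0}\rangle$). You have made two compensating mistakes---the wrong width on one side, the wrong polytope on the other---that accidentally agree numerically, but neither step is correct as written. Once you include the $\mathfrak{g}^{\vee}$ term in $\mathbb{L}_{\mathscr{V}}$ and compare with $\nabla_w$, the identification with $\widehat{\mathbb{T}}(d)_{w,q}$ follows immediately, as in the paper's proof; the $\rho$-shift plays no role at this point because the passage from $\mathbf{W}(d)_w$ to $\nabla_w$ (via~\cite[Lemma~2.9]{hls}) has already absorbed it.
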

 	 		\begin{proof}
 	 			The defining property of the intrinsic window 
 	 			subcategory is local on the good moduli space, so 
 	 			$\mathcal{E} \in D^b(\mathfrak{M}_S(d))$ is an object 
 	 			in $\mathbb{W}_{\delta}^{\rm{int}}$ if and only if, 
 	 			for any $p\in \mathrm{Sym}^d(S)$, we have 
 	 			$\mathcal{E}|_{\widehat{\mathfrak{M}}_S(d)_p} \in 
 	 			\mathbb{W}_{\delta_p}^{\rm{int}}$
 	 			in $D^b(\widehat{\mathfrak{M}}_S(d)_p)$, 
 	 			where $\delta_p=\delta|_{\widehat{\mathfrak{M}}_S(d)_p}$. 
 	 			 	 				By the presentation invariance of the intrinsic window subcategory, 
 	 			see~\cite[Lemma~5.3.14]{T}, 
 	 			this is also equivalent to
 	 		$\mathcal{F}_q:=\iota_p^{\ast}
 	 		\left(\mathcal{E}\big|_{\widehat{\mathfrak{M}}_S(d)_p}\right)$
 	 		being an object of $\mathbb{W}_{w \chi_0/d}^{\rm{int}}$ in $D^b(\widehat{\mathscr{C}}(d)_q)$. 
 	 		Here, note that the line bundle $\mathscr{L}$ restricted to 
 	 			$\widehat{\mathfrak{M}}_S(d)_p$ corresponds to the 
 	 			determinant character 
 	 			$\chi_0=\det \colon GL(d) \to \mathbb{C}^{\ast}$
 	 			on $\widehat{\mathscr{C}}(d)_q$. 	
 	 			
 	 			 We have the closed immersion
 	 			\begin{align*}
 	 				j_q \colon 
 	 				\widehat{\mathscr{C}}(d)_q=
 	 				\mu_q^{-1}(0)/GL(d) \hookrightarrow \widehat{\mathcal{Y}}(d)_q,
 	 			\end{align*}
 	 			where $\mu_q$ is the commutator map $\widehat{\mathcal{Y}}(d)_q \to 
 	 			\mathfrak{gl}(d)/GL(d)$. 
 	 			Let $\mathscr{V} \to \widehat{\mathcal{Y}}(d)_q$
 	 			be the total space of the vector bundle induced by the $GL(d)$-representation 
 	 			$\mathfrak{gl}(d)$. 
 	 			Then we have 
 	 			\begin{align*}
 	 			    \mathbb{L}_{\mathscr{V}}|_{\widehat{\mathcal{Y}}(d)_q}
 	 			    =\left(\mathfrak{gl}(d) \otimes \mathcal{O}_{\widehat{\mathcal{Y}}(d)_q} 
 	 			    \to \mathfrak{gl}(d)^{\oplus 3} \otimes \mathcal{O}_{\widehat{\mathcal{Y}}(d)_q} \right). 
 	 			\end{align*}
 	 			Therefore, the condition that $\mathcal{F}_q \in \mathbb{W}_{w\chi_0/d}^{\rm{int}}$
 	 			means that, for any 
 	 		$\lambda \colon B\mathbb{C}^{\ast} \to \widehat{\mathscr{C}}(d)_q$, 
 	 		we have 
 	 		\begin{align}\label{cond:lambda}
 	 			\mathrm{wt}(\lambda^{\ast}j_{q\ast}\mathcal{F}_q)
 	 			\subset \left[-\langle \lambda, \mathfrak{gl}(d)^{\lambda>0} \rangle, \langle \lambda, \mathfrak{gl}(d)^{\lambda>0}\rangle  \right]+w\tau_d,
 	 			\end{align} where $\mathrm{wt}(\lambda^{\ast}j_{q\ast}\mathcal{F}_q)$ denotes the set of $\mathbb{C}^*$-weights of $\lambda^{\ast}j_{q\ast}\mathcal{F}_q$.
 	 			Here, in the right hand side, we have denoted the cocharacter 
 	 			$\mathbb{C}^{\ast}\to GL(d)$ associated with $\lambda \colon B\mathbb{C}^{\ast} \to \widehat{\mathscr{C}}(d)_q$ by the same symbol $\lambda$. 
  			The condition (\ref{cond:lambda}) for all $\lambda$ is equivalent to 
  			$j_{q\ast}\mathcal{F}_q$ being generated by the vector bundles
  			$\mathcal{O}_{\widehat{\mathcal{Y}}(d)_q}\otimes\Gamma$
  			for $GL(d)$-representations $\Gamma$ whose $T(d)$-weights are 
  			contained in $\nabla_w$, see (\ref{def:nabla}). 
  			The last condition is also equivalent to 
  			$\mathcal{F}_q \in \widehat{\mathbb{T}}(d)_{w, q}$, 
  			see Subsection~\ref{subsub:const}. 
 	 			\end{proof}

We also give another characterization of quasi-BPS categories 
in terms of Ext-quivers. 
Let $p \in \mathrm{Sym}^d(S)$ be as above. 
The unique closed point in the fiber 
of $\mathcal{M}_S(d) \to \mathrm{Sym}^d(S)$
at $p$ corresponds to the 
semisimple sheaf 
\begin{align*}
	F=\bigoplus_{j=1}^m V^{(j)} \otimes \mathcal{O}_{x^{(j)}}. 
\end{align*}
We set 
\begin{align*}
	G_p :=\mathrm{Aut}(F)=\prod_{j=1}^m \GL(V^{(j)}), \ 
	\mathcal{Y}'(d)_p &:= \left[\widehat{\Ext}_S^1(F, F)/G_p\right]. 
\end{align*}
Let $\kappa_p$ be the following $G_p$-equivariant morphism 
\begin{align*}
	\kappa_p \colon \widehat{\Ext}_S^1(F, F)
	 \to \Ext_S^2(F, F), \ u \mapsto [u, u]. 
\end{align*}
Then the formal fiber 
$\widehat{\mathfrak{M}}_{S}(d)_p$
of $\mathfrak{M}_S(d)$ at $p$ 
is also equivalent to 
the derived zero locus of $\kappa_p$:
\begin{align}\label{equiv:kappa}
\kappa_p^{-1}(0)/G_p \stackrel{\sim}{\to} \widehat{\mathfrak{M}}_S(d)_p. 
    \end{align}

We define 
$\widetilde{\mathbb{T}}_{X}(d)_{w, p} \subset D^b(\mathcal{Y}'(d)_p)$
to be the subcategory generated by 
$\Gamma_{G_p}(\chi) \otimes \mathcal{O}_{\mathcal{Y}'(d)_p}$, where $\chi$ is a 
$G_p$-dominant weight of $T(d)$  
satisfying 
\begin{align}\label{chirho}
	\chi+\rho_{G_p} \in
	\frac{3}{2}\mathrm{sum}[0, \beta_{i}^{(j)}-\beta_{i'}^{(j)}]
	+w\tau_d,
\end{align} 
where the sum is after all $1\leq j\leq m$ and $1\leq i, i'\leq d^{(j)}$, see also Subsection~\ref{subsub:const} for the definition of a $G_p$-dominant weight.  
Define the subcategory
\begin{align*}
	\mathbb{T}_{X}(d)_{w, p} \subset D^b(\widehat{\mathfrak{M}}_S(d)_p)
\end{align*}
consisting of objects $\mathcal{E}$ such that $i_{p\ast}\mathcal{E} \in \widetilde{\mathbb{T}}_{X}(d)_{w, p}$, 
where $i_p \colon \widehat{\mathfrak{M}}_S(d)_p \hookrightarrow \mathcal{Y}'(d)_p$
is the closed immersion through the equivalence (\ref{equiv:kappa}). 

\begin{lemma}\label{lem:Mdw}
	An object $\mathcal{E} \in D^b(\mathfrak{M}_{S}(d))_w$ lies in 
	$\mathbb{T}_X(d)_w$ if and only if, for any closed point $p\in \mathrm{Sym}^d(S)$, 
	we have
	$\mathcal{E}|_{\widehat{\mathfrak{M}}_{S}(d)_p}
		\in \mathbb{T}_X(d)_{w, p}$. 
\end{lemma}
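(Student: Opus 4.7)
The plan is to deduce this lemma from Lemma \ref{lem:compare} by comparing two different Koszul presentations of the formal fiber $\widehat{\mathfrak{M}}_S(d)_p$, using the presentation invariance of the intrinsic window subcategory \cite[Lemma~5.3.14]{T}.

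First, by Lemma \ref{lem:compare}, an object $\mathcal{E} \in D^b(\mathfrak{M}_S(d))_w$ lies in $\mathbb{T}_X(d)_w$ if and only if for each $p=\sum_{j=1}^m d^{(j)} x^{(j)} \in \mathrm{Sym}^d(S)$ we have $\iota_p^{\ast}(\mathcal{E}|_{\widehat{\mathfrak{M}}_S(d)_p}) \in \widehat{\mathbb{T}}(d)_{w,q}$ in $D^b(\widehat{\mathscr{C}}(d)_q)$, where $q=\sum_j d^{(j)}y^{(j)} \in \mathrm{Sym}^d(\mathbb{C}^2)$. It therefore suffices to show that under the equivalence $\iota_p$ the subcategory $\widehat{\mathbb{T}}(d)_{w,q}$ corresponds to $\mathbb{T}_X(d)_{w,p}$.

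Both subcategories will be identified with the restriction of the intrinsic window subcategory $\mathbb{W}^{\mathrm{int}}_{w\mathscr{L}/d} \subset D^b(\mathfrak{M}_S(d))$ to $\widehat{\mathfrak{M}}_S(d)_p$, via two different Koszul models of the same quasi-smooth stack. The first model, treated in the proof of Lemma \ref{lem:compare}, is the commutator model for $(\mathfrak{g}^{\oplus 2},\mathfrak{g},\mu,GL(d))$ with $\mathfrak{g}=\mathfrak{gl}(d)$, giving the Koszul stack $\widehat{\mathscr{C}}(d)_q$ with ambient $\widehat{\mathcal{Y}}(d)_q$. The second is the Ext-quiver model $(\widehat{\mathrm{Ext}}^1_S(F,F),\,\mathrm{Ext}^2_S(F,F),\,\kappa_p,\,G_p)$ from \eqref{equiv:kappa}, with closed immersion $i_p \colon \widehat{\mathfrak{M}}_S(d)_p \hookrightarrow \mathcal{Y}'(d)_p$.

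The key step is to unwind the intrinsic window condition (Definition~\ref{def:intwind}) for the Ext-quiver model into the explicit polytope condition \eqref{chirho}. For each $\lambda \colon B\mathbb{C}^{\ast} \to \widehat{\mathfrak{M}}_S(d)_p$, I would compute the half-sum of $\lambda$-positive (respectively $\lambda$-negative) weights of the two-term complex $\mathbb{L}_{\mathscr{V}'}|_{\mathcal{Y}'(d)_p} = \bigl(\mathrm{Ext}^1_S(F,F)\to \mathrm{Ext}^2_S(F,F)\bigr)\otimes \mathcal{O}$. Since the points $x^{(j)}$ are distinct, the off-diagonal Ext-groups vanish, so as a $G_p=\prod_j GL(V^{(j)})$-representation this virtual difference reduces to $\bigoplus_{j=1}^m \mathfrak{gl}(V^{(j)})$. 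Tracking weights through the half-sum computation, in the same way as in the proof of Lemma~\ref{lem:compare}, converts the condition into the requirement that $i_{p\ast}\mathcal{E}$ be generated by $\mathcal{O}_{\mathcal{Y}'(d)_p} \otimes \Gamma_{G_p}(\chi)$ for $G_p$-dominant $\chi$ satisfying \eqref{chirho}, i.e.\ into membership in $\widetilde{\mathbb{T}}_X(d)_{w,p}$ and hence of $\mathbb{T}_X(d)_{w,p}$. Presentation invariance \cite[Lemma~5.3.14]{T} then identifies $\iota_{p\ast}\widehat{\mathbb{T}}(d)_{w,q}$ with $\mathbb{T}_X(d)_{w,p}$, which combined with Lemma~\ref{lem:compare} yields the claim. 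The only delicate point, but routine in light of the computation for the commutator model, is the weight-polytope bookkeeping deriving \eqref{chirho} from the intrinsic window condition for the Ext-quiver presentation.
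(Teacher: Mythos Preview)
Your proposal is correct and follows essentially the same route as the paper: the paper's proof simply says to rerun the argument of Lemma~\ref{lem:compare} using the presentation independence of the intrinsic window subcategory~\cite[Lemma~5.3.14]{T} together with the Ext-quiver presentation $\widehat{\mathfrak{M}}_S(d)_p=\kappa_p^{-1}(0)/G_p$ in place of $\mu_q^{-1}(0)/GL(d)$. Your detour through Lemma~\ref{lem:compare} as a black box before comparing the two presentations is unnecessary---you can go directly from the definition of $\mathbb{T}_X(d)_w$ as an intrinsic window subcategory to the Ext-quiver model---but the substantive computation (the weight bookkeeping yielding~\eqref{chirho}) is identical. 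One small correction: the two-term complex $\mathbb{L}_{\mathscr{V}'}|_{\mathcal{Y}'(d)_p}$ is $(\mathfrak{g}_p \to \Ext^1_S(F,F)^\vee \oplus \Ext^2_S(F,F)^\vee)\otimes\mathcal{O}$, not $(\Ext^1_S(F,F)\to\Ext^2_S(F,F))\otimes\mathcal{O}$, though after self-duality of $\mathrm{End}(V^{(j)})$ the virtual weight count comes out the same.
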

\begin{proof}
	The proof is the same as the one of Lemma~\ref{lem:compare}, 
	using the presentation independence of the intrinsic 
	window subcategory and the presentation 
	$\widehat{\mathfrak{M}}_S(d)_p=\kappa_p^{-1}(0)/G_p$
	instead of $\mu_q^{-1}(0)/GL(d)$. 
	\end{proof}

As before, we can also characterize $\mathbb{T}_X(d)_{w, p}$ via Koszul duality. 
Let $(\mathcal{X}(d)_p, f_p)$ be defined by 
\begin{align*}
	\mathcal{X}(d)_p=\left(\widehat{\Ext}_S^1(F, F) \oplus \Ext_S^2(F, F)^{\vee} \right)/G_p
	\stackrel{f_p}{\to} \mathbb{C}
\end{align*}
where $f_p(u, v)=\langle \kappa_p(u), v \rangle$. 
Note that we have 
\begin{align}\label{prod:V}
	(\Ext_S^1(F, F) \oplus \Ext_S^2(F, F)^{\vee})/G_p
	=\prod_{j=1}^m \mathrm{End}(V^{(j)})^{\oplus 3}/\GL(V^{(j)}), 
\end{align}
such that $f_p=\sum_{j=1}^m \Tr W_{d^{(j)}}$,
where $\Tr W_d$ is given in (\ref{TrWd}). 
We denote by 
\begin{align*}
	\mathbb{S}^{\rm{gr}}(d)_{w, p} \subset \text{MF}^{\text{gr}}(\mathcal{X}(d)_p, f_p)
\end{align*}
the dg-subcategory generated by 
matrix factorizations whose factors are generated by  
$\Gamma_{G_p}(\chi) \otimes \mathcal{O}_{\mathcal{X}(d)_p}$, where $\chi$ is a
$G_p$-dominant weight
satisfying (\ref{chirho}). 
The Koszul duality gives an equivalence (see Theorem~\ref{thm:Kduality}): 
\begin{align}\label{Kos:p}
	\Phi_p \colon D^b(\widehat{\mathfrak{M}}_{S}(d)_p)
	\stackrel{\sim}{\to} \text{MF}^{\text{gr}}(\mathcal{X}(d)_p, f_p). 
\end{align}
Then similarly to (\ref{equiv:ST}), it restricts to the equivalence 
\begin{align*}
	\Phi_p \colon \mathbb{T}_X(d)_{w, p} \stackrel{\sim}{\to}\mathbb{S}^{\rm{gr}}(d)_{w, p}.
\end{align*}

\begin{remark}\label{rmk:product}
We set $p^{(j)}=d^{(j)}x^{(j)} \in \mathrm{Sym}^{d^{(j)}}(S)$ for $1\leq j\leq m$. We then have 
\begin{align}\label{prod:Mp}
	\widehat{\mathfrak{M}}_{S}(d)_p=\prod_{j=1}^m   \widehat{\mathfrak{M}}_{S}(d^{(j)})_{p^{(j)}}. 
\end{align}
Note that under the product (\ref{prod:Mp}),
we have 
\begin{align*}
	\mathbb{T}_{X}(d)_{w, p}=\boxtimes_{j=1}^m
	\mathbb{T}_{X}(d^{(j)})_{w^{(j)}, p^{(j)}}
\end{align*}
where $w^{(j)}/d^{(j)}=w/d$. 
Similarly under the product (\ref{prod:V}),
we have 
\begin{align}\label{Mtimes}
	\mathbb{S}^{\rm{gr}}(d)_{w, p}=
	\boxtimes_{j=1}^m \mathbb{S}^{\rm{gr}}(d^{(j)})_{w^{(j)}, p^{(j)}}. 
\end{align}
\end{remark}

\subsection{The semiorthogonal decomposition of \texorpdfstring{$D^b(\mathfrak{M}_S(d))$}{MS} into
quasi-BPS categories}

The main result of \cite{P2} is that quasi-BPS categories provide a semiorthogonal decomposition of $D^b(\mathfrak{M}_{S}(d))$, alternatively of the Hall algebra of points on a surface. 
Let 
\begin{align}\label{dfilt}
\mathfrak{M}_S(d_1, \ldots, d_k)
\end{align}
be the derived moduli stack 
of filtrations of coherent sheaves on $S$:
\begin{align}\label{filt:Q}
0=Q_0 \subset	Q_1 \subset Q_2 \subset \cdots \subset Q_k
	\end{align}
such that each subquotient $Q_i/Q_{i-1}$ is a zero-dimensional 
sheaf on $S$ with length $d_i$. 
There exist evaluation morphisms 
\begin{align*}
	\mathfrak{M}_S(d_1) \times \cdots \times \mathfrak{M}_S(d_k) \stackrel{q}{\leftarrow} 
	\mathfrak{M}_S(d_1, \ldots, d_k) \stackrel{p}{\to} \mathfrak{M}_S(d),
	\end{align*}
where $p$ is proper, $q$ is quasi-smooth,
and $d=d_1+\cdots+d_k$. 
The above diagram for $k=2$
defines the categorical Hall product
due to Porta--Sala \cite{PoSa}:
\begin{align}\label{PS}
\ast=p_{\ast}q^{\ast} \colon 
D^b(\mathfrak{M}_S(d_1)) \boxtimes D^b(\mathfrak{M}_S(d_2)) \to 
D^b(\mathfrak{M}_S(d)).
\end{align}

\begin{thm}\emph{(\cite[Theorem~1.1]{P2})}\label{SODsurface}
	There is a semiorthogonal decomposition 
	\begin{align}\label{eqSODsurface}
		D^b(\mathfrak{M}_{S}(d))=\left\langle \bigotimes_{i=1}^k 
		\mathbb{T}_X(d_i)_{v_i} \,\Big|\, 
		\frac{v_1}{d_1}<\cdots<\frac{v_k}{d_k}, d_1+\cdots+d_k=d\right\rangle. 
		\end{align}
	Each fully-faithful functor 
	$\bigotimes_{i=1}^k \mathbb{T}_X(d_i)_{v_i} \hookrightarrow D^b(\mathfrak{M}_{S}(d))$
	is given by the categorical Hall product. 
	The order is given by the subset $O \subset R \times R$
	from~\cite[Subsection~3.4]{PT0}. 
	\end{thm}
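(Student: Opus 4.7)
The plan is to reduce the statement to a formal-local problem over the good moduli space $\mathrm{Sym}^d(S)$, then to transport it, via the Koszul equivalence $\Phi_p$ of \eqref{Kos:p}, to a semiorthogonal decomposition of graded matrix factorizations on the triple loop quiver moduli stack, where the decomposition follows from the magic window technology of \v{S}penko--Van den Bergh and Halpern-Leistner--Sam.

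First I would show that the candidate functor $\bigotimes_{i=1}^k\mathbb{T}_X(d_i)_{v_i}\to D^b(\mathfrak{M}_S(d))$, defined by iterating the Porta--Sala Hall product \eqref{PS}, lands in $D^b(\mathfrak{M}_S(d))$ and takes values in objects whose weight profile in the intrinsic window sense is admissible. This requires analyzing the $\mathbb{C}^\ast$-weight contribution from the derived stack of filtrations \eqref{dfilt} and applying \cite[Proposition~3.1]{P2} to account for the discrepancy between the Porta--Sala convolution and the quiver Hall product; the resulting shift is precisely what forces the slope condition $v_1/d_1<\cdots<v_k/d_k$ on the two sides to match.

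Next I would verify semiorthogonality and generation formally locally on $\mathrm{Sym}^d(S)$. Thanks to Lemma~\ref{lem:Mdw}, membership in $\mathbb{T}_X(d)_w$ is detected after restriction to formal fibers $\widehat{\mathfrak{M}}_S(d)_p$, and the same local-to-global principle applies to $\Hom$-spaces via the good moduli space morphism. By Remark~\ref{rmk:product}, for $p=\sum_{j=1}^m d^{(j)}x^{(j)}$ the formal fiber splits as a product $\prod_j\widehat{\mathfrak{M}}_S(d^{(j)})_{p^{(j)}}$, and both the quasi-BPS subcategories and the Hall product respect this product, so the question reduces to the single-point case $p=d\cdot x$. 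In that case $\Phi_p$ together with \eqref{Mtimes} and \eqref{equiv:ST} identifies the target SOD with
\begin{align*}
\mathrm{MF}^{\mathrm{gr}}(\mathcal{X}(d),\Tr W_d)=\Big\langle \boxtimes_{i=1}^k \mathbb{S}^{\mathrm{gr}}(d_i)_{v_i}\,\Big|\, \tfrac{v_1}{d_1}<\cdots<\tfrac{v_k}{d_k},\ \textstyle\sum d_i=d\Big\rangle,
\end{align*}
which I would obtain from the magic window SOD of $D^b(\mathcal{X}(d))$ into the pieces $\mathbb{M}(d_i)_{v_i}$ applied componentwise to both factors of a matrix factorization, using that $\mathbb{S}^{\mathrm{gr}}(d)_w$ is by definition the subcategory of matrix factorizations whose factors lie in $\mathbb{M}(d)_w$. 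Finally I would assemble the local decompositions by passing to limits over the category of étale neighborhoods of $\mathrm{Sym}^d(S)$, as in the proof of Theorem~\ref{thm:window:M}, using the base-change compatibility of both the Hall product and the intrinsic window condition.

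The main obstacle will be the Hall-product compatibility: identifying, after Koszul duality, the Porta--Sala convolution on $\mathfrak{M}_S(d)$ restricted to a formal neighborhood with the quiver-with-potential Hall product \eqref{cat:Hall:M} for the triple loop quiver, up to the precise slope shift coming from \cite[Proposition~3.1]{P2}. Without this matching, the local SODs would not glue to the claimed global SOD, and the slope indexing on the two sides would be mismatched. I expect this to reduce, after careful bookkeeping, to a comparison of the evaluation maps for the stack of short exact sequences of zero-dimensional sheaves at $x\in S$ with the attracting diagram \eqref{attracting} for the appropriate cocharacter $\lambda$ acting on $\mathcal{X}(d)$, combined with the explicit form of the Koszul kernel $\mathcal{K}$ from Theorem~\ref{thm:Kduality}.
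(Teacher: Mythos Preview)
The paper does not prove this theorem: it is quoted verbatim as \cite[Theorem~1.1]{P2} and no argument is given. So there is no ``paper's own proof'' to compare against; the paper only sets up the surrounding notation (Lemmas~\ref{lem:compare} and~\ref{lem:Mdw}, Remark~\ref{rmk:product}, the Koszul equivalence~\eqref{Kos:p}) and then invokes the result.

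That said, your sketch is consistent with the strategy of \cite{P2} as reflected in the paper's preparatory material: reduce to formal fibers over $\mathrm{Sym}^d(S)$ using Lemma~\ref{lem:Mdw}, split into single-point contributions via Remark~\ref{rmk:product}, pass through Koszul duality~\eqref{Kos:p} to land in $\mathrm{MF}^{\mathrm{gr}}(\mathcal{X}(d),\Tr W_d)$, and apply the magic-window semiorthogonal decomposition there. The Hall-product compatibility you flag as the main obstacle is exactly what \cite[Proposition~3.1]{P2} provides, and the paper uses this same proposition (see the discussion around diagram~\eqref{com:MF}) when proving the analogous global statement for DT categories. One point to be careful with: at a general $p=\sum_j d^{(j)}x^{(j)}$ the local quasi-BPS pieces decompose as in~\eqref{Mtimes} with weights $w^{(j)}$ constrained by $w^{(j)}/d^{(j)}=w/d$, and the Hall product on the formal fiber involves a direct sum over all decompositions $d_\bullet=\sum_j d_\bullet^{(j)}$ (compare~\eqref{func:formal}), so the gluing step must track these combinatorics rather than only the single-point case.
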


\subsection{Objects in quasi-BPS categories for \texorpdfstring{$\mathbb{C}^3$}{C3}} 
If $S=\mathbb{C}^2$, the stack $\mathfrak{M}_S(d)$ is identified with 
the derived commuting stack
defined in Subsection~\ref{subsec:qBPS},
i.e. 
\begin{align*}
\mathfrak{M}_{\mathbb{C}^2}(d)=\mathscr{C}(d).
\end{align*}
Let $(d, v)\in \mathbb{N}\times\mathbb{Z}$. 
We revisit the construction of the objects $\mathcal{E}_{d,v}\in D^b(\mathscr{C}(d))$
from \cite[Definition 4.2]{PT0}, which are objects of quasi-BPS categories. 
Let $\mathfrak{b}$ be the Lie algebra of the Borel subgroup 
$B(d) \subset GL(d)$ consisting of upper triangular matrices, let
$\mathfrak{n}$ be the nilpotent subalgebra of $\mathfrak{b}$, and let $\overline{\mathfrak{n}}:=[\mathfrak{n}, \mathfrak{n}]$. Consider the commutator map \[\nu\colon \mathfrak{n}^{\oplus 2}\to \overline{\mathfrak{n}}.\]  Let $B(d)$ act  naturally on the derived zero locus 
$\nu^{-1}(0)$ and let $T(d)\subset B(d)$ act trivially on $\mathbb{C}^2$. 
Consider the stack $\mathcal{Z}$ and the maps
\[\mathbb{C}^2/T(d)\xleftarrow{q} \mathcal{Z}:=\left(\nu^{-1}(0)\times \mathbb{C}^2\right)/B(d)\xrightarrow{p}\mathscr{C}(d),\]
where $p$ 
is a proper map induced by the inclusion 
$\mathfrak{n}^{\oplus 2} \subset \mathfrak{g}^{\oplus 2}$
and the diagonal matrices $\mathbb{C} \subset \mathfrak{g}$, 
and $q$ is induced by the natural projections $\nu^{-1}(0) \times \mathbb{C}^2 \to \mathbb{C}^2$, 
$B(d) \to T(d)$, see
\cite[Subsection 4.3]{PT0}. 
The following lemma is implicitly proved in~\cite[Section~5]{PT0}. 
\begin{lemma}
The derived stack $\mathcal{Z}$ is classical, i.e. we have an equivalence
\begin{equation}\label{ZZcl}
    \mathcal{Z}^{\mathrm{cl}}\stackrel{\sim}{\to}\mathcal{Z}.
\end{equation}
\end{lemma}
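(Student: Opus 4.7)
My first step would be to reduce the claim to classicality of the derived zero locus $\nu^{-1}(0)$ itself. The factor $\mathbb{C}^2$ is smooth, so $\nu^{-1}(0) \times \mathbb{C}^2$ is classical if and only if $\nu^{-1}(0)$ is; moreover, the stack quotient by the smooth algebraic group $B(d)$ preserves the property of being classical. Thus the question reduces to showing that the derived zero locus of the commutator map $\nu \colon \mathfrak{n}^{\oplus 2} \to \overline{\mathfrak{n}}$ is classical. Using the Koszul presentation
\[
\mathcal{O}_{\nu^{-1}(0)} = \mathcal{O}_{\mathfrak{n}^{\oplus 2}}\left[\overline{\mathfrak{n}}^{\vee}[1]; d_{\nu}\right],
\]
this amounts to showing that the components of $\nu$ form a regular sequence on $\mathfrak{n}^{\oplus 2}$, which is in turn equivalent to the classical commuting variety
\[
\mathcal{C}(\mathfrak{n}) := \{(X, Y) \in \mathfrak{n}^{\oplus 2} : [X, Y] = 0\}
\]
having the expected codimension $\dim \overline{\mathfrak{n}} = (d-1)(d-2)/2$, equivalently total dimension $(d-1)(d+2)/2$.

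Next I would establish the lower bound $\dim \mathcal{C}(\mathfrak{n}) \geq (d-1)(d+2)/2$. Consider the first projection $\mathcal{C}(\mathfrak{n}) \to \mathfrak{n}$ sending $(X,Y) \mapsto X$. Over the dense open locus of regular nilpotent $X \in \mathfrak{n}$, the centralizer $\mathrm{Cent}_{\mathfrak{n}}(X) = \mathfrak{n} \cap \mathbb{C}[X]$ has dimension exactly $d-1$ (spanned by $X, X^2, \ldots, X^{d-1}$). This produces a component of total dimension $\dim \mathfrak{n} + (d-1) = (d-1)(d+2)/2$, matching the expected dimension.

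The heart of the proof is the matching upper bound, and this is where I expect the main obstacle to lie. My preferred route is a Springer-type incidence argument: introduce the variety of triples $(X, Y, F_\bullet)$ with $(X, Y) \in \mathcal{C}(\mathfrak{n})$ and $F_\bullet$ a complete flag preserved by $X$ and $Y$ (with nilpotent action on each graded piece), then compute its dimension via the second projection to the flag variety, where the fiber over each flag is a commuting variety in the nilradical of the stabilizing Borel that can be handled inductively on $d$; the first projection is proper onto $\mathcal{C}(\mathfrak{n})$ and, after stratification by Jordan type together with the classical formula $\dim \mathrm{Cent}_{\mathfrak{gl}(d)}(X) = d + 2 n(\lambda)$, yields the desired inequality $\dim \mathcal{C}(\mathfrak{n}) \leq (d-1)(d+2)/2$. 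Alternatively, and more directly, this dimension estimate is implicit in \cite[Section~5]{PT0}: there the proper map $p \colon \mathcal{Z} \to \mathscr{C}(d)$ is analyzed and the tautological objects $\mathcal{E}_{d, v}$ are constructed under the tacit assumption that $\mathcal{Z}$ carries the expected classical structure, and the present lemma is precisely a repackaging of that computation. The technical obstacle in either approach is the uniform control of centralizer dimensions on the non-regular Jordan strata of $\mathfrak{n}$.
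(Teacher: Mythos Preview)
Your proposal is correct and follows essentially the same approach as the paper: reduce to classicality of $\nu^{-1}(0)$, then to the dimension check $\dim \nu^{-1}(0)^{\mathrm{cl}} = 2\dim\mathfrak{n} - \dim\overline{\mathfrak{n}}$, and finally invoke \cite[Section~5]{PT0}. The paper simply cites \cite[Proposition~5.17]{PT0} for the dimension equality without the lower-bound discussion or the Springer-type alternative you sketch, so your write-up is more detailed but not different in substance.
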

\begin{proof}
It suffices to show that $\nu^{-1}(0)$ is a classical scheme. The structure complex $\mathcal{O}_{\nu^{-1}(0)}$ is the Koszul complex of a section in $\Gamma(\mathfrak{n}^{\oplus 2}, \mathcal{O}_{\mathfrak{n}^{\oplus 2}}\otimes\overline{\mathfrak{n}})$, so it suffices to check that \begin{equation}\label{nunucl}
\dim \nu^{-1}(0)=\dim \nu^{-1}(0)^{\mathrm{cl}},\end{equation} see for example \cite[Subsection 1.4]{MR2573635}, where $\dim \nu^{-1}(0)=2\dim_{\mathbb{C}}\mathfrak{n}-\dim_{\mathbb{C}}\overline{\mathfrak{n}}$ is the dimension of $\nu^{-1}(0)$ as a quasi-smooth scheme.
The equality \eqref{nunucl} follows from \cite[Proposition 5.17]{PT0}.
\end{proof}
We set
\begin{align}\label{def:mi}
	m_i :=\left\lceil \frac{vi}{d} \right\rceil -\left\lceil \frac{v(i-1)}{d} \right\rceil
	+\delta_i^d -\delta_i^1 \in \mathbb{Z},
	\end{align}
	where $\delta^j_i$ is the Kronecker delta function: $\delta^j_i=1$ if $i=j$ and $\delta^j_i=0$ otherwise.
	Let $\lambda$ be the cocharacter 
\begin{align}\label{lambda:cochar}
	\lambda \colon \mathbb{C}^{\ast} \to GL(d), \ 
	t \mapsto (t^d, t^{d-1}, \ldots, t). 
	\end{align}
		 For a weight
	 $\chi=\sum_{i=1}^d n_i \beta_i$ with $n_i \in \mathbb{Z}$, 
		    we denote by $\mathbb{C}(\chi)$
		    the one dimensional $GL(d)^{\lambda \geq 0}$-representation given by 
		    \begin{align*}
		       GL(d)^{\lambda \geq 0}= B(d) \to GL(d)^{\lambda}=T(d) \stackrel{\chi}{\to} \mathbb{C}^{\ast},
		    \end{align*}
		    where the first morphism is the natural projection.
We define the object $\mathcal{E}_{d, v}$ by 
\begin{align}\label{def:Edw}
\mathcal{E}_{d, v}:=p_{\ast}\left(\mathcal{O}_{\mathcal{Z}} \otimes 
\mathbb{C}(m_1, \ldots, m_d)\right)
\in D^b(\mathscr{C}(d)). 
\end{align}
By \cite[Lemma 4.3]{PT0}, we have  
\begin{equation}\label{edvmagic}
    \mathcal{E}_{d,v}\in \mathbb{T}_{\mathbb{C}^3}(d)_v.
\end{equation}


\subsection{Objects in quasi-BPS categories for a local surface} 

Let $S$ be a smooth surface. 
The classical truncation of (\ref{dfilt})
for all $d_i=1$:
\[\mathcal{M}_S(1, \ldots, 1)=t_0(\mathfrak{M}_S(1, \ldots, 1))\] 
is the classical moduli stack of filtrations
(\ref{filt:Q}) 
such that
$Q_i/Q_{i-1}=\mathcal{O}_{x_i}$ for some 
$x_i \in S$. It admits a natural morphism to $S^{\times d}$
by sending the above filtration to $(x_1, \ldots, x_d)$. 
Define \[\mathcal{Z}_S:=\mathcal{M}_S(1, \ldots, 1) \times_{S^{\times d}} \Delta,\]
where $\Delta\cong S \subset S^{\times d}$ is the small diagonal and the fiber product is in the 
classical sense. 
Observe that \eqref{ZZcl} implies that \[\mathcal{Z}_{\mathbb{C}^2}\cong \mathcal{Z}.\] 
For $d\in \mathbb{N}$, let $T(d)$ act trivially on $S$. 
We have the commutative diagram 
\begin{align*}
    \xymatrix{
    \mathcal{Z}_S  \ar@/^18pt/[rrr]^-{p_S}
    \ar@<-0.3ex>@{^{(}->}[r]\ar[d]_-{q_S} \ar@{}[rd]|\square & \mathcal{M}_S(1, \ldots, 1) \ar[r] \ar[d] & 
    \mathcal{M}_S(d) \ar@<-0.3ex>@{^{(}->}[r] & \mathfrak{M}_S(d) \\
    S/T(d) \ar@<-0.3ex>@{^{(}->}[r]_-{\Delta} & S^{\times d}/T(d). & & 
    }
\end{align*}
Let $x\in S(\mathbb{C})$. Let $\widehat{\mathbb{C}}^2$ be the formal completion of $0$ in $\mathbb{C}^2$.
For a choice of an isomorphism 
$\widehat{\mathcal{O}}_{S, x} \cong \widehat{\mathcal{O}}_{\mathbb{C}^2, 0}$, there is a commutative diagram
\begin{align}\label{diagiso2}
    \xymatrix{
    (\nu^{-1}(0)/B(d)) \times \widehat{\mathbb{C}}^2 \ar[r]^-{\cong} \ar[d] & \widehat{\mathcal{Z}}_{S, x} \ar[d]
    \ar[r] \ar@{}[rd]|\square
    & \mathcal{Z}_S \ar[d] \\
    \widehat{\mathfrak{M}}_{\mathbb{C}^2}(d)_0 \ar[r]^-{\cong} & \widehat{\mathfrak{M}}_{S}(d)_x
    \ar[r] & \mathfrak{M}_S(d). 
    }
\end{align}
Let $(d, v)\in \mathbb{N}\times\mathbb{Z}$. 
Recall the definition of the integers $m_i$ from \eqref{def:mi}. Define the functor 
\begin{align*}
    \Phi_{d,v}\colon D^b(S)&\to D^b(\mathfrak{M}_{S}(d))_v,\\
    \mathcal{F}&\mapsto p_{S*}\left(q_S^*(\mathcal{F}\otimes \mathbb{C}(m_1, \ldots, m_d))\right).
\end{align*}

\begin{prop}
The image of the functor $\Phi_{d,v}$ is in the category $\mathbb{T}_X(d)_v$.
\end{prop}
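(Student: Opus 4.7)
By Lemma~\ref{lem:Mdw}, it suffices to check, for every closed point $p=\sum_{j=1}^m d^{(j)}x^{(j)}\in\mathrm{Sym}^d(S)$, that the restriction $\Phi_{d,v}(\mathcal{F})|_{\widehat{\mathfrak{M}}_S(d)_p}$ lies in the local quasi-BPS category $\mathbb{T}_X(d)_{v,p}$. I would split into two cases according to whether $m\geq 2$ or $m=1$, and reduce the second case to the already-established local statement \eqref{edvmagic} for $\mathbb{C}^3$ via the Cartesian square \eqref{diagiso2}.

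Since $\mathcal{Z}_S=\mathcal{M}_S(1,\ldots,1)\times_{S^{\times d}}\Delta$ with $\Delta\cong S$ the small diagonal, the image of $p_S$ is contained in the closed substack of $\mathfrak{M}_S(d)$ consisting of sheaves whose set-theoretic support is a single point of $S$. For $m\geq 2$, no point of the formal fiber $\widehat{\mathfrak{M}}_S(d)_p$ parametrizes such a sheaf (formal deformations of $\bigoplus_j V^{(j)}\otimes\mathcal{O}_{x^{(j)}}$ cannot collapse the $m$ distinct supports into one), so $\Phi_{d,v}(\mathcal{F})|_{\widehat{\mathfrak{M}}_S(d)_p}=0$ and lies trivially in $\mathbb{T}_X(d)_{v,p}$.

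For $p=dx$, fix an isomorphism $\widehat{\mathcal{O}}_{S,x}\cong\widehat{\mathcal{O}}_{\mathbb{C}^2,0}$ and invoke \eqref{diagiso2} to identify $\widehat{\mathcal{Z}}_{S,x}$ with $(\nu^{-1}(0)\times\widehat{\mathbb{C}}^2)/B(d)$ and $\widehat{\mathfrak{M}}_S(d)_x$ with $\widehat{\mathfrak{M}}_{\mathbb{C}^2}(d)_0$. Proper base change along the right-hand Cartesian square in \eqref{diagiso2} gives
\begin{align*}
\Phi_{d,v}(\mathcal{F})\big|_{\widehat{\mathfrak{M}}_S(d)_x}\;\cong\;\widehat{p}_{*}\bigl(\widehat{q}^{\,*}(\mathcal{F}|_{\widehat{x}})\otimes\mathbb{C}(m_1,\ldots,m_d)\bigr),
\end{align*}
where $\widehat{p},\widehat{q}$ are the base-changes of $p_S,q_S$. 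Since $\widehat{\mathcal{O}}_{S,x}\cong\mathbb{C}[[u,v]]$ is a regular local ring of dimension two, $\mathcal{F}|_{\widehat{x}}$ admits a finite resolution by finite-rank free $\widehat{\mathcal{O}}_{S,x}$-modules. By triangulated d\'evissage the problem reduces to the case $\mathcal{F}|_{\widehat{x}}=\widehat{\mathcal{O}}_{S,x}$, in which case the displayed right-hand side equals $\mathcal{E}_{d,v}|_{\widehat{\mathfrak{M}}_{\mathbb{C}^2}(d)_0}$. This object lies in $\widehat{\mathbb{T}}(d)_{v,0}$ by~\eqref{edvmagic} combined with Lemma~\ref{lem:compare}, and under the identification of formal fibers it corresponds to an object of $\mathbb{T}_X(d)_{v,dx}$, as required.

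The most delicate step is the base-change identification: one must verify that the right-hand square in \eqref{diagiso2} is Cartesian in the derived sense and that proper pushforward commutes with formal completion along $x$, so that $\Phi_{d,v}(\mathcal{F})|_{\widehat{\mathfrak{M}}_S(d)_x}$ is genuinely computed by the formula displayed above. All other ingredients---closure of $\mathbb{T}_X(d)_{v,dx}$ under cones and finite direct sums, finite projective dimension of $\mathcal{F}|_{\widehat{x}}$ over $\mathbb{C}[[u,v]]$, and the appeal to the local BPS identity~\eqref{edvmagic} from \cite{PT0}---are routine.
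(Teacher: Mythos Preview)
Your proposal is correct and follows essentially the same approach as the paper. The paper's proof is terse---it observes that objects in the image of $\Phi_{d,v}$ are supported over the small diagonal, reduces to the formal neighborhood of $d[x]$, and then invokes diagram~\eqref{diagiso2} and \eqref{edvmagic}---while you have fleshed out precisely the details the paper leaves implicit: the vanishing for $m\geq 2$, the proper base change along the Cartesian square in~\eqref{diagiso2}, and the d\'evissage to the case $\mathcal{F}|_{\widehat{x}}=\widehat{\mathcal{O}}_{S,x}$ via a finite free resolution.
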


\begin{proof}
It suffices to check the claim formally locally on $\mathrm{Sym}^d(S)$. The objects in the image of $\Phi_{d,v}$ are supported over the small diagonal $\Delta\cong S\hookrightarrow \mathrm{Sym}^d(S)$ by construction. Let $x\in S$ and consider the formal completions of spaces along the fiber over $d[x]\in\mathrm{Sym}^d(S)$ or $d[0]\in \mathrm{Sym}^d(\mathbb{C}^2)$. 
The conclusion then follows from 
the diagrams (\ref{diagiso2}) and \eqref{edvmagic}.
\end{proof}

\subsection{Dimensions of quasi-BPS categories}\label{sub:dimBPS} 
In this section, we prove Theorem \ref{thmtoric}. Let $S$ be a smooth toric surface. 
Then the two-dimensional torus $T=(\mathbb{C}^{\ast})^2$ acts on $S$, 
and the categorical Hall product (\ref{PS}) can be defined $T$-equivariantly. 
Recall that $\mathbb{K}:=K_0(BT)$, $\mathbb{F}:=\mathrm{Frac}\,\mathbb{K}$, and that for $V$ a $\mathbb{K}$-module we let $V_\mathbb{F}:=V\otimes_\mathbb{K}\mathbb{F}$. 
There is an associative algebra structure on 
\begin{align*}
    \mathrm{KHA}_{S, T}:=\bigoplus_{d \geq 0} G_T(\mathfrak{M}_S(d))_{\mathbb{F}}
    \stackrel{\cong}{\leftarrow}\bigoplus_{d \geq 0} G_T(\mathcal{M}_S(d))_{\mathbb{F}}
    . 
\end{align*}

We begin with a few preliminaries. 
Let $S^T$ be the (finite) set of $T$-fixed points on $S$. 
For $p \in \mathrm{Sym}^d(S)$, denote by $\mathcal{M}_S(d)_p$ the fiber over $p$ along the map $\pi_d\colon \mathcal{M}_{S}(d)\to \mathrm{Sym}^d(S)$. 
We have the following lemma: 
\begin{lemma}\label{lem:KHA}
    There is an isomorphism 
    \begin{align}\label{isomultiple}
        \mathrm{KHA}_{S, T} \cong \bigotimes_{x\in S^T} \left(\bigoplus_{d\geq 0}G_T\big(\mathcal{M}_{S}(d)_{d[x]}\big)_\mathbb{F}\right). 
    \end{align}
\end{lemma}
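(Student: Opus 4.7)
The plan is to combine $T$-equivariant localization along the support map $\pi_d$ with a K\"unneth decomposition over the $T$-fixed locus, and then verify that the categorical Hall product restricted to sheaves with disjoint support collapses to an external tensor product.

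First, I would exploit that the support map $\pi_d \colon \mathcal{M}_S(d) \to \mathrm{Sym}^d(S)$ is $T$-equivariant. The $T$-fixed points of the target form the finite set
\[
\mathrm{Sym}^d(S)^T = \left\{ p_{\underline{d}} := \sum_{x \in S^T} d_x[x] \,\Big|\, \textstyle\sum_{x \in S^T} d_x = d \right\}.
\]
Let $\mathcal{M}_S(d)^{\circ} := \pi_d^{-1}(\mathrm{Sym}^d(S)^T)$ and $U := \mathcal{M}_S(d) \setminus \mathcal{M}_S(d)^{\circ}$. Every point of $U$ maps to $\mathrm{Sym}^d(S)$ with positive-dimensional $T$-orbit, so its $T$-stabilizer is a proper subtorus of $T$. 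By Thomason's equivariant localization theorem applied to the quasi-compact $T$-equivariant stack $U$, this gives $G_T(U)_{\mathbb{F}} = 0$, and the open-closed localization sequence produces
\[
G_T(\mathcal{M}_S(d)^{\circ})_{\mathbb{F}} \stackrel{\sim}{\to} G_T(\mathcal{M}_S(d))_{\mathbb{F}}.
\]

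Next, since a zero-dimensional sheaf whose support is a finite set of distinct points splits canonically into summands by support, I would identify
\[
\mathcal{M}_S(d)^{\circ} = \bigsqcup_{\underline{d}} \prod_{x \in S^T} \mathcal{M}_S(d_x)_{d_x[x]},
\]
and apply the K\"unneth formula for equivariant $K$-theory of these stacks over the field $\mathbb{F}$. The K\"unneth step is legitimate because, by Lemma~\ref{lem:compare}, the formal-local model of $\mathcal{M}_S(d_x)_{d_x[x]}$ at $d_x[x]$ is a fiber of the commuting stack $\mathscr{C}(d_x)$, so each factor is a quasi-smooth quotient stack of linear data by $GL(d_x)$. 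Summing over $d$ gives the vector-space isomorphism matching the right-hand side of \eqref{isomultiple}.

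It remains to upgrade this to an algebra isomorphism. For $x \neq x'$ in $S^T$ and sheaves $F_x$, $F_{x'}$ supported at $x$ and $x'$, disjointness of supports forces $\mathrm{Ext}^1(F_{x'}, F_x) = 0$, so the portion of $\mathfrak{M}_S(d_x, d_{x'})$ lying over $\mathcal{M}_S(d_x)_{d_x[x]} \times \mathcal{M}_S(d_{x'})_{d_{x'}[x']}$ is simply the product, and the Porta--Sala Hall product \eqref{PS} between such factors collapses to the external tensor product. This matches the tensor-algebra structure on the right-hand side of \eqref{isomultiple}, and the Hall product within a single factor $\bigoplus_d G_T(\mathcal{M}_S(d)_{d[x]})_{\mathbb{F}}$ gives the algebra structure of that factor. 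The main obstacle will be the localization step: precisely, justifying $G_T(U)_{\mathbb{F}} = 0$ for the non-proper stack $U$ requires the stacky form of Thomason's localization and a careful argument that stabilizers on $U$ are indeed proper subtori. A secondary check is that the K\"unneth map is an isomorphism after $\otimes_{\mathbb{K}} \mathbb{F}$ for these particular quasi-smooth stacks; this is manageable because, after localization, only the $T$-fixed points of each factor contribute, and those are classifying stacks of products of $GL(d_x)$'s, for which K\"unneth is standard.
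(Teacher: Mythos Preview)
Your approach is essentially the same as the paper's: both use Thomason's equivariant localization along the support map $\pi_d$ to reduce to the fibers over $\mathrm{Sym}^d(S)^T$, then invoke the product decomposition $\mathcal{M}_S(d)_p = \prod_j \mathcal{M}_S(d^{(j)})_{d^{(j)}[x^{(j)}]}$. The paper's proof is terser---it does not spell out the K\"unneth step or the vanishing $G_T(U)_{\mathbb{F}}=0$, and it defers the compatibility with the Hall product to the proof of the next proposition rather than treating it here---so your version is more complete, but the underlying argument is identical. One minor point: your citation of Lemma~\ref{lem:compare} for the formal-local model is slightly off; that lemma concerns the quasi-BPS category, whereas the local model you need is the equivalence $\widehat{\mathfrak{M}}_S(d)_p \simeq \widehat{\mathscr{C}}(d)_q$ discussed earlier in Subsection~\ref{subquasi-BPS}.
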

\begin{proof}
The $T$-fixed loci of $\mathcal{M}_{S}(d)$ are included in
fibers over $T$-fixed loci of $\mathrm{Sym}^d(S)$. 
The $T$-fixed locus
$\mathrm{Sym}^d(S)^T$
consists of points $p=\sum_{j=1}^m d^{(j)} x^{(j)}$
such that $x^{(j)} \in S^T$, $x^{(1)}, \ldots, x^{(m)}$
are distinct, 
and $d^{(1)}+\cdots +d^{(m)}=d$. 
By the localization theorem in K-theory \cite{MR1297442}, there is an isomorphism
\begin{align*} 
G_T\left(\mathcal{M}_{S}(d)\right)_\mathbb{F}&\cong G_T\left(\bigsqcup_{p\in \mathrm{Sym}^d(S)^T}\mathcal{M}_{S}(d)_p\right)_\mathbb{F}.
\end{align*}
Moreover, we have 
\begin{align*}
    \mathcal{M}_S(d)_p=\prod_{j=1}^m \mathcal{M}_S(d^{(j)})_{d^{(j)}[x^{(j)}]}. 
\end{align*}
We thus obtain the desired isomorphism. 
\end{proof}
For $x\in S^T$,
define \[\Phi(d)_{v,x}:=\Phi_{d,v}(\mathcal{O}_x)\in \mathbb{T}_X(d)_v.\]
We abuse notation and also denote by $\Phi(d)_{v,x}$ its class in K-theory.

\begin{prop}\label{prop38}
(a) Let $x$ and $x'$ be points in $S^T$, not necessarily distinct, and consider two pairs $(d, v)$ and $(d', v')$ such that $v/d=v'/d'$. Then 
\begin{equation}\label{relation}
\Phi(d)_{v,x}\Phi(d')_{v',x'}=\Phi(d')_{v',x'}\Phi(d)_{v,x}.
\end{equation}(b) The $\mathbb{F}$-vector space $G_T\big(\mathfrak{M}_{S}(d)\big)_{\mathbb{F}}$ has an $\mathbb{F}$-basis given by equivalences classes of monomials \begin{equation}\label{def:monomial}
    \Phi(d_1)_{v_1,x_1}\cdots \Phi(d_k)_{v_k,x_k},\end{equation} where $x_i\in S^T$ for $1\leq i\leq k$, the inequality $v_1/d_1 \leq\cdots\leq v_k/d_k$ holds, and $\sum_{i=1}^k d_i=d$, where two monomials \eqref{def:monomial} are equivalent if they differ by relations \eqref{relation}. 
\end{prop}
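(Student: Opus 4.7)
The strategy is to combine the torus-localization of Lemma~\ref{lem:KHA} with the analogous statements for $\mathbb{C}^3$ already established in~\cite{PT0}.

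For part (a), I would first dispose of the case $x\neq x'$. Both products $\Phi(d)_{v,x}\Phi(d')_{v',x'}$ and $\Phi(d')_{v',x'}\Phi(d)_{v,x}$ are supported, via the Hall product~\eqref{PS}, on the fiber of $\mathcal{M}_S(d+d')\to \mathrm{Sym}^{d+d'}(S)$ over the $T$-fixed point $d[x]+d'[x']$. The isomorphism \eqref{isomultiple} of Lemma~\ref{lem:KHA} is one of algebras, realized as a tensor product indexed by $S^T$, and in such a tensor product elements coming from distinct factors automatically commute. In particular, the case $x\neq x'$ holds without the slope hypothesis. For the remaining case $x=x'$, the formal-local diagram~\eqref{diagiso2} identifies $\Phi(d)_{v,x}$ and $\Phi(d')_{v',x}$ (at the level of K-theory classes, after a choice of local coordinates at $x$) with the objects $\mathcal{E}_{d,v}$ and $\mathcal{E}_{d',v'}$ on $\mathscr{C}(d+d')$ from~\eqref{def:Edw}. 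The commutativity of the corresponding classes in $G_T(\mathscr{C}(d+d'))_{\mathbb{F}}$ whenever $v/d=v'/d'$ is the already-established statement for $\mathbb{C}^3$ from~\cite[Section~4]{PT0}.

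For part (b), I would assemble three ingredients. First, passing the semiorthogonal decomposition of Theorem~\ref{SODsurface} to equivariant K-theory yields
\[
G_T(\mathfrak{M}_S(d))_{\mathbb{F}}
\;\cong\;
\bigoplus_{\substack{d_1+\cdots+d_k=d\\ v_1/d_1<\cdots<v_k/d_k}}
G_T\!\left(\mathbb{T}_X(d_1)_{v_1}\otimes\cdots\otimes\mathbb{T}_X(d_k)_{v_k}\right)_{\mathbb{F}},
\]
with each summand embedded via the categorical Hall product. Second, for a fixed pair $(d,v)$, Lemma~\ref{lem:KHA} together with Remark~\ref{rmk:product} decomposes $G_T(\mathbb{T}_X(d)_v)_{\mathbb{F}}$ as a direct sum, over $T$-fixed points $p=\sum_j d^{(j)}[x^{(j)}]\in \mathrm{Sym}^d(S)^T$, of tensor products $\bigotimes_j G_T(\mathbb{T}_X(d^{(j)})_{v^{(j)},p^{(j)}})_{\mathbb{F}}$ with equal slopes $v^{(j)}/d^{(j)}=v/d$. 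Third, each local K-theory factor is identified via~\eqref{diagiso2} with an equivariant K-theory of a $\mathbb{C}^3$-quasi-BPS category, which by~\cite[Theorem~4.12]{PT0} has a basis given by monomials in the objects $\mathcal{E}_{d',v'}$. Reading these basis elements back through the Hall product gives exactly the monomials $\Phi(d_1)_{v_1,x_1}\cdots\Phi(d_k)_{v_k,x_k}$ with $v_1/d_1\leq\cdots\leq v_k/d_k$, where equalities of consecutive slopes correspond precisely to the ordering ambiguity between factors; by part (a), this ambiguity is captured by the equivalence relation in the statement.

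The main obstacle I anticipate is verifying the Künneth-type compatibility needed to identify $G_T\!\left(\bigotimes_i\mathbb{T}_X(d_i)_{v_i}\right)_{\mathbb{F}}$ with the tensor product $\bigotimes_i G_T(\mathbb{T}_X(d_i)_{v_i})_{\mathbb{F}}$ and to match the Hall-product generators with the stated $\Phi$-monomials, and parallel to this, checking that $T$-equivariant localization interacts with the semiorthogonal decomposition of Theorem~\ref{SODsurface} in a sufficiently functorial way to preserve the combinatorics on both sides. Granted these compatibilities, the bases match exactly and both claims follow.
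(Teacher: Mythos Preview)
Your approach to part (a) matches the paper's: both dispose of $x\neq x'$ via commutativity of the tensor factors in Lemma~\ref{lem:KHA}, and reduce $x=x'$ to the known $\mathbb{C}^3$ statement through the formal-local identification~\eqref{diagiso2} (the paper cites \cite[Proposition~5.7]{PT1} for the equal-slope commutativity rather than \cite{PT0}).

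For part (b) your route genuinely differs, and the K\"unneth obstacle you flag is a real gap. You first pass Theorem~\ref{SODsurface} to equivariant K-theory and then attempt to identify each summand $G_T\!\left(\bigotimes_i \mathbb{T}_X(d_i)_{v_i}\right)_{\mathbb{F}}$ with a tensor product of local pieces; this step needs a K\"unneth isomorphism for equivariant K-theory of tensor products of quasi-BPS categories. In the paper that K\"unneth isomorphism is established (see Remark~\ref{sub:catMM}) \emph{using} Proposition~\ref{prop38} and Corollary~\ref{cor39}, so your argument as written would be circular. The paper sidesteps this entirely by reversing the order of operations: it works directly with the K-theoretic Hall algebra, showing via localization and support considerations that each $\mathrm{KHA}_{S,T,x}:=\bigoplus_{d\geq 0} G_T(\mathcal{M}_S(d)_{d[x]})_{\mathbb{F}}$ is a subalgebra of $\mathrm{KHA}_{S,T}$, that these commute for distinct $x$, and that each is isomorphic as an algebra to $\mathrm{KHA}_{\mathbb{C}^2,T}$. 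The basis of the latter from \cite[Equations~(4.31), (4.36)]{PT0} then yields the basis of the tensor product by elementary linear algebra, with no categorical K\"unneth required. Your strategy can likely be rescued by proving K\"unneth independently (e.g.\ by localizing the product of quasi-BPS categories down to products of $\mathbb{C}^2$-pieces), but the paper's ``localize first, then use the algebra structure'' is the more economical path.
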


\begin{proof}

Let $x\in S^T$. 
We argue that the Hall product \eqref{PS} induces an algebra structure on 
\begin{equation}\label{KHAp}
\mathrm{KHA}_{S, T, x}:=\bigoplus_{d\geq 0}G_T(\mathcal{M}_S(d)_{d[x]})_\mathbb{F}.
\end{equation}
First, by the localization theorem in K-theory \cite{MR1297442}, there are injective maps $G_T(\mathcal{M}_S(d)_{d[x]})_\mathbb{F}\hookrightarrow G_T(\mathcal{M}_S(d))_\mathbb{F}$, and thus 
\[G_T(\mathcal{M}_S(d)_{d[x]})_\mathbb{F}=\mathrm{Ker}\left(G_T(\mathcal{M}_S(d))_\mathbb{F}\xrightarrow{\mathrm{res}} G_T(\mathcal{M}_S(d)\setminus \mathcal{M}_S(d)_{d[x]})_\mathbb{F} \right).\]
The following composition of the natural pushforward, of the Hall product, and of the natural restriction is 
a zero map: 
\begin{align*}
    G_T(\mathcal{M}_S(d)_{d[x]})_\mathbb{F}\otimes G_T(\mathcal{M}_S(e)_{e[x]})_\mathbb{F} &\to G_T(\mathcal{M}_S(d))_\mathbb{F}\otimes G_T(\mathcal{M}_S(e))_\mathbb{F}\\ 
    &\xrightarrow{\ast}G_T(\mathcal{M}_S(d+e))_\mathbb{F} \\
    &\to G_T(\mathcal{M}_S(d+e)\setminus \mathcal{M}_S(d+e)_{(d+e)[x]})_\mathbb{F}.
\end{align*}
There is thus a Hall product 
\[\ast\colon G_T(\mathcal{M}_S(d)_{d[x]})_\mathbb{F}\otimes G_T(\mathcal{M}_S(e)_{e[x]})_\mathbb{F}\to G_T(\mathcal{M}_S(d+e)_{(d+e)[x]})_\mathbb{F}\] and \eqref{KHAp} is indeed an algebra.
The algebras $\mathrm{KHA}_{S, T, x}$ and $\mathrm{KHA}_{S, T, x'}$ commute for $x\neq x'\in S^T$
inside $\mathrm{KHA}_{S, T}$. Further, by the isomorphism \eqref{diagiso2} and the localization theorem in K-theory \cite{MR1297442}, we have that \[\mathrm{KHA}_{S, T, x}\cong \mathrm{KHA}_{\mathbb{C}^2, T, 0}\cong \mathrm{KHA}_{\mathbb{C}^2, T}.\] 
Part (a) then follows by \cite[Proposition 5.7]{PT1}.
By \cite[Equations (4.31) and (4.36)]{PT0}, the $\mathbb{F}$-algebra $\mathrm{KHA}_{\mathbb{C}^2, T, 0}\cong \mathrm{KHA}_{\mathbb{C}^2, T}$ has an $\mathbb{F}$-basis with unordered monomials \[\Phi(d_1)_{v_1,0}\ldots\Phi(d_k)_{v_k,0}\] for decompositions $\sum_{i=1}^k d_i=d$ and integers $v_i$ for $1\leq i\leq k$ satisfying the inequality $v_1/d_1 \leq\cdots\leq v_k/d_k$. 
By Lemma~\ref{lem:KHA}, the conclusion of (b) follows.
\end{proof}

\begin{cor}\label{cor39}
The $\mathbb{F}$-vector space $K_T\big(\mathbb{T}_X(d)_v\big)_{\mathbb{F}}$ has an $\mathbb{F}$-basis given by equivalence classes of monomials $\Phi(d_1)_{v_1,x_1}\cdots \Phi(d_k)_{v_k,x_k}$ for partition $\sum_{i=1}^k (d_i, v_i)=(d, v)$, points $x_i\in S^T$ for $1\leq i\leq k$, and $v_1/d_1=\cdots=v_k/d_k$, where two monomials are equivalent if they differ by relations \eqref{relation}. 
\end{cor}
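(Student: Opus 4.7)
The plan is to combine Proposition~\ref{prop38}(b) with the semiorthogonal decomposition of Theorem~\ref{SODsurface}. Passing to $K$-theory, Theorem~\ref{SODsurface} produces a direct sum decomposition
\begin{align*}
G_T(\mathfrak{M}_S(d))_\mathbb{F}
= \bigoplus_{\substack{\sum_{i=1}^{k} d_i = d \\ v_1/d_1 < \cdots < v_k/d_k}}
K_T\left(\bigotimes_{i=1}^{k} \mathbb{T}_X(d_i)_{v_i}\right)_{\mathbb{F}},
\end{align*}
where the inclusion of each summand is induced by the $T$-equivariant categorical Hall product \eqref{PS}. In particular, for the single summand with $k=1$ and $(d_1, v_1) = (d, v)$, the space $K_T(\mathbb{T}_X(d)_v)_{\mathbb{F}}$ appears as a canonical direct summand of $G_T(\mathfrak{M}_S(d))_{\mathbb{F}}$.

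Next, I will sort the monomial basis of Proposition~\ref{prop38}(b) according to this decomposition. Given a basis monomial $\Phi(d_1)_{v_1,x_1}\cdots \Phi(d_k)_{v_k,x_k}$ with $v_1/d_1 \leq \cdots \leq v_k/d_k$, I group consecutive factors of equal slope: this yields a strictly increasing sequence of slopes $s_1 < \cdots < s_\ell$ with accumulated pairs $(e_j, u_j)$ for $1 \leq j \leq \ell$ satisfying $u_j/e_j = s_j$. Because each factor $\Phi(d_i)_{v_i, x_i}$ lies in $\mathbb{T}_X(d_i)_{v_i}$ and the categorical Hall product of objects of the same slope lands inside the quasi-BPS category of the combined slope (a property of the intrinsic window subcategory of Definition~\ref{def:intwind} that is implicit in Theorem~\ref{SODsurface} and verified in~\cite{P2}), the whole monomial is an element of the SOD summand indexed by $(e_j, u_j)_{j=1}^{\ell}$.

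Therefore, the basis monomials which lie in the summand $K_T(\mathbb{T}_X(d)_v)_{\mathbb{F}}$ are exactly those for which $\ell = 1$ and $(e_1, u_1) = (d, v)$, that is, monomials $\Phi(d_1)_{v_1,x_1}\cdots \Phi(d_k)_{v_k,x_k}$ with $\sum_{i=1}^{k}(d_i, v_i) = (d, v)$ and common slope $v_1/d_1 = \cdots = v_k/d_k = v/d$. These equivalence classes are linearly independent as part of the full basis from Proposition~\ref{prop38}(b), and by the previous paragraph their $\mathbb{F}$-span is precisely the summand $K_T(\mathbb{T}_X(d)_v)_{\mathbb{F}}$ (every basis monomial of different slope type contributes to a different summand). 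Hence they form an $\mathbb{F}$-basis as claimed. The only nontrivial input beyond bookkeeping is the compatibility of the Hall product with the intrinsic window condition at equal slopes; this is the step that must be ensured in order to match the Proposition~\ref{prop38}(b) basis with the SOD summands.
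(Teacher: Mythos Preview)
Your proof is correct and follows essentially the same approach as the paper: combine the monomial basis of Proposition~\ref{prop38}(b) with the $T$-equivariant version of Theorem~\ref{SODsurface}, sorting monomials by slope type to match them with semiorthogonal summands. The paper's own proof is terse---it simply refers to the argument of \cite[Theorem~4.12]{PT0} together with the isomorphism~\eqref{isomultiple}, Proposition~\ref{prop38}, and Theorem~\ref{SODsurface}---but your write-up unpacks exactly this, and you correctly flag the one nontrivial ingredient: that the Hall product of quasi-BPS objects of equal slope lands in the quasi-BPS category of the combined data (the paper invokes this as \cite[Lemma~4.8]{PT0} in the subsequent proof of Theorem~\ref{thmtoric}).
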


\begin{proof}
The same argument as the one in \cite[Proof of Theorem 4.12]{PT0} (which treats the case $S=\mathbb{C}^2$) applies here and is based on the isomorphism \eqref{isomultiple}, Proposition \ref{prop38},
   and on the $T$-equivariant version of Theorem \ref{SODsurface}.
\end{proof}

\begin{proof}[Proof of Theorem \ref{thmtoric}]
Let $(d', v')\in \mathbb{N}\times\mathbb{Z}$ be coprime integers such that $n(d', v')=(d, v)$. Using \cite[Lemma 4.8]{PT0}, the Hall product induces an algebra structure on 
\begin{align}\label{alg:KT}
\bigoplus_{k\geq 0} K_T(\mathbb{T}_X(kd')_{kv'})_\mathbb{F}.
\end{align}
By Corollary \ref{cor39}, the above algebra 
has an $\mathbb{F}$-basis with equivalences classes of monomials $\Phi(k_1d')_{k_1v', x_1}\ldots \Phi(k_rd')_{k_rv', x_r}$ for all partitions $k_1+\cdots+k_r=k$ and choice of points $x_i\in S^T$ for $1\leq i\leq r$. 
Let $a_k:=\dim_\mathbb{F} K_T(\mathbb{T}_X(kd')_{kv'})_\mathbb{F}$ and let $b_k$ be the number of partitions of $k$. Then 
\begin{align*}
    \sum_{k\geq 0}a_kq^k=\left(\sum_{k\geq 0}b_kq^k\right)^{|S^T|}
    =\left(\prod_{b\geq 1}\frac{1}{1-q^b}\right)^{\chi_c(S)}
    =\sum_{k\geq 0}\chi_c(\mathrm{Hilb}(S, k))q^k.
\end{align*}
The first equality holds from the description of (\ref{alg:KT}), 
the second equality 
holds because $|S^T|=\chi_c(S)$
and the known product formula for the number of partitions, and the 
last equality
follows from \cite{MR1032930}.  
\end{proof}

\begin{remark}
Recall Conjecture \ref{mainconj}. We may also conjecture the analogous statement 
\[\chi_{\mathrm{HP}}(\mathbb{T}_X(d)_w)=\chi_c(\mathrm{Hilb}(S, n)),\]
where the left hand side is the Euler characteristic of the periodic cyclic homology of $\mathbb{T}_X(d)_w$, which is two-periodic: 
\[\chi_{\mathrm{HP}}(\mathbb{T}_X(d)_w):=\dim_{\mathbb{C}(\!(u)\!)} \mathrm{HP}_0(\mathbb{T}_X(d)_w)-\dim_{\mathbb{C}(\!(u)\!)} \mathrm{HP}_1(\mathbb{T}_X(d)_w).\]
\end{remark}

\subsection{Singular support of objects in quasi-BPS categories}

Let $S$ be a smooth surface, let $X=\mathrm{Tot}_S(K_S)$, and let $d\in \mathbb{N}$. 
Let $\mathcal{M}_X(d)$ be the classical 
moduli stack of zero-dimensional sheaves on $X$ with 
length $d$. By~\cite[Lemma~3.4.1]{T}, we have 
\begin{align*}
    \mathcal{M}_X(d)=
    t_0(\Omega_{\mathfrak{M}_S(d)}[-1]).
    \end{align*}
In particular, 
an object $F\in D^b(\mathfrak{M}_{S}(d))$ has a singular support $\mathrm{Supp}^{\rm{sg}}(F)\subset \mathcal{M}_{X}(d)$, see Subsection \ref{singularsupport}. Consider the diagram
\begin{equation*}
    \begin{tikzcd}
    \pi_d^{-1}(X)\arrow[d, "\pi_d"]\arrow[r, hook]& \mathcal{M}_{X}(d)\arrow[d, "\pi_d"]\\
    X\arrow[r, hook, "\Delta"]& \mathrm{Sym}^d(X),
    \end{tikzcd}
\end{equation*}
where $\Delta\colon X\hookrightarrow \mathrm{Sym}^d(X)$ is the small diagonal map $x\mapsto (x,\ldots, x)$.
Davison~\cite[Theorem~5.1]{Dav} showed that the BPS sheaf for the moduli stack of degree $d$ sheaves on $\mathbb{C}^3$ is 
\[\mathcal{B}PS(d)=\Delta_*\mathrm{IC}_{\mathbb{C}^3}.\]
In \cite[Theorem 1.1]{PT0}, we proved a categorical version of the above result, namely that the singular support of an object
in $\mathbb{T}_{\mathbb{C}^3}(d)_w$ is contained in $\pi_d^{-1}(\mathbb{C}^3)$.
Using the formal local description of $\mathbb{T}_X(d)_w$ and \cite[Theorem 3.1]{PT0}, we obtain the analogous result for $X$:

\begin{prop}
Let $w\in \mathbb{Z}$ be such that $\gcd(d, w)=1$ and
    let $F\in \mathbb{T}_X(d)_w$. Then $\mathrm{Supp}^{\rm{sg}}(F)\subset \pi_d^{-1}(X)$.
\end{prop}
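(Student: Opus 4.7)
My plan is to reduce the problem to the previously established case $S=\mathbb{C}^2$ via the formal local description of $\mathbb{T}_X(d)_w$. Since the singular support of $F$ is a closed conical subset of $\mathcal{M}_X(d)$ and the formation of singular support is compatible with smooth (in particular formally étale) base change, it suffices to control $\mathrm{Supp}^{\rm{sg}}(F)$ after passing to the formal fiber at each closed point $p\in \mathrm{Sym}^d(S)$, via the map $\mathcal{M}_X(d)\to \mathcal{M}_S(d)\to \mathrm{Sym}^d(S)$.

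First I would handle the \emph{non-diagonal} points. Take $p=\sum_{j=1}^m d^{(j)}x^{(j)}\in\mathrm{Sym}^d(S)$ with distinct $x^{(j)}$ and $m\geq 2$. By Lemma~\ref{lem:Mdw} together with the product formula of Remark~\ref{rmk:product}, the restriction $F|_{\widehat{\mathfrak{M}}_S(d)_p}$ lies in the tensor product $\boxtimes_{j=1}^m \mathbb{T}_X(d^{(j)})_{w^{(j)},p^{(j)}}$, where the weights are constrained by $w^{(j)}/d^{(j)}=w/d$. Since $\gcd(d,w)=1$ and $d^{(j)}<d$, no integer $w^{(j)}$ satisfies this equation, so $F|_{\widehat{\mathfrak{M}}_S(d)_p}=0$ and there is nothing to check. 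This shrinks the problem to closed points of the form $p=d[x]$ with $x\in S$.

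Next I would analyze such a diagonal point. Choose an isomorphism $\widehat{\mathcal{O}}_{S,x}\cong \widehat{\mathcal{O}}_{\mathbb{C}^2,0}$; this produces an equivalence $\widehat{\mathfrak{M}}_S(d)_{d[x]}\simeq \widehat{\mathscr{C}}(d)_{d[0]}$ of derived stacks that matches $\mathbb{T}_X(d)_w$ with $\widehat{\mathbb{T}}(d)_{w,d[0]}$ on the nose (this is precisely the local comparison used in the proof of Lemma~\ref{lem:compare}). Passing to $(-1)$-shifted cotangent stacks, the same isomorphism produces a formally étale equivalence between the formal neighborhood of $\mathcal{M}_X(d)$ over $d[x]$ and the formal neighborhood of $\mathcal{M}_{\mathbb{C}^3}(d)$ over $d[0]\in\mathrm{Sym}^d(\mathbb{C}^3)$, carrying the small diagonal $X\hookrightarrow\mathrm{Sym}^d(X)$ to $\mathbb{C}^3\hookrightarrow \mathrm{Sym}^d(\mathbb{C}^3)$.

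Finally, since singular support is preserved under the equivalence above, applying the $\mathbb{C}^3$-statement (\cite[Theorem~3.1]{PT0}, invoked in the paragraph preceding the proposition) gives that $\mathrm{Supp}^{\rm{sg}}(F|_{\widehat{\mathfrak{M}}_S(d)_{d[x]}})$ is contained in the preimage of the small diagonal. Combined with the vanishing of $F$ on non-diagonal formal fibers from the first step, this yields $\mathrm{Supp}^{\rm{sg}}(F)\subset \pi_d^{-1}(X)$ globally. The main potential obstacle is making sure the formal local identifications are genuinely compatible with the singular support construction (which involves $(-1)$-shifted cotangent stacks rather than just classical truncations); however, this is exactly the kind of formally étale compatibility established for $\mathfrak{M}_S(d)$ in~\cite[Lemma~3.4.1]{T} and used throughout Section~2.6, so no new input should be needed beyond carefully unwinding the diagram~\eqref{diagiso2}.
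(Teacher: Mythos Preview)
Your proof is correct and follows the same overall strategy as the paper: reduce to the known $\mathbb{C}^3$ case via the formal local description of the quasi-BPS category, using compatibility of singular support with the formal base change and the identification of $(-1)$-shifted cotangent stacks.

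The one genuine difference is your treatment of non-diagonal points $p=\sum_{j=1}^m d^{(j)}x^{(j)}\in\mathrm{Sym}^d(S)$ with $m\geq 2$. You observe that the product decomposition $\mathbb{T}_X(d)_{w,p}=\boxtimes_{j}\mathbb{T}_X(d^{(j)})_{w^{(j)},p^{(j)}}$ from Remark~\ref{rmk:product} requires integer weights $w^{(j)}=wd^{(j)}/d$, which is impossible when $\gcd(d,w)=1$ and $d^{(j)}<d$; hence $F|_{\widehat{\mathfrak{M}}_S(d)_p}=0$ and the check is vacuous. The paper instead treats all $p$ uniformly: it chooses a point $q\in\mathrm{Sym}^d(\mathbb{C}^2)$ with the same multiplicity pattern as $p$, builds the commutative cube (\ref{diagXc2}) comparing the formal fibers over $p$ and $q$, and invokes the $\mathbb{C}^3$ support lemma at this general $q$. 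Your shortcut is a pleasant simplification that exploits the coprimality hypothesis directly and only needs the $\mathbb{C}^3$ result at the single point $q=d[0]$; the paper's route is more uniform but requires the cited result at arbitrary $q$.
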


\begin{proof}
 Let $\tau\colon X\to S$, $\tau\colon \mathbb{C}^3=\mathrm{Tot}_{\mathbb{C}^2}(K_{\mathbb{C}^2})\to\mathbb{C}^2$ be the natural projection maps. Consider the maps
 \begin{equation}\label{map:sym}
     \begin{tikzcd}
     \mathcal{M}_{X}(d)\arrow[d, "\tau"]\arrow[r, "\pi^3_d"]& \mathrm{Sym}^d(X)\arrow[d, "\tau"]\\ \mathcal{M}_{S}(d)\arrow[r, "\pi_d^2"]& \mathrm{Sym}^d(S).
     \end{tikzcd}
 \end{equation}
 We also use the notations $\pi^2_d$ and $\pi^3_d$ for maps of moduli spaces constructed from $\mathbb{C}^3$.
 Let $p\in \mathrm{Sym}^d(S)$ be such that $\mathrm{Supp}^{\rm{sg}}(F)\cap (\pi^3_d\tau)^{-1}(p)\neq \emptyset$. 
 Write $p=\sum_{i=1}^m d^{(i)}x^{(i)}$ for $m\geq 1$, $x^{(i)}\neq x^{(j)}$ points in $S$ for $1\leq i, j\leq m$, and $d^{(i)}\geq 1$. 
 There is a point $q=\sum_{i=1}^m d^{(i)}y^{(i)}\in \mathrm{Sym}^d(\mathbb{C}^2)$ for points $y^{(i)}\neq y^{(j)}$ in $\mathbb{C}^2$ for $1\leq i\neq j\leq m$, together with the following commutative diagram 
 
\begin{align}\label{diagXc2}
    \xymatrix{
      &    &   \widehat{\mathcal{M}}_{\mathbb{C}^3}(d)_q \ar[r]^-{\pi_d^3} \ar[d]^(.3){\tau}|\hole &  \widehat{\mathrm{Sym}}^d(\mathbb{C}^3)_q\ar[d]_-{\tau} \\
      \widehat{\mathcal{M}}_{X}(d)_p \ar[rru]^-{\cong}_-{\alpha} \ar[r]_-{\pi_d^3} \ar[d]_-{\tau} 
      &  \widehat{\mathrm{Sym}}^d(X)_p 
      \ar[d]_(.3){\tau} \ar[rru]^(.3){\cong} & \widehat{\mathcal{M}}_{\mathbb{C}^2}(d)_q \ar[r]^-{\pi_d^2} & \widehat{\mathrm{Sym}}^d(\mathbb{C}^2)_q \\
       \widehat{\mathcal{M}}_{S}(d)_p \ar[rru]^(.3){\cong}|\hole \ar[r]_-{\pi_d^2} & \widehat{\mathrm{Sym}}^d(S)_p  
       \ar[rru]_-{\cong} & & &
      }
\end{align}

  In the above, $\widehat{\mathrm{Sym}}^d(S)_p$ is the completion of $\mathrm{Sym}^d(S)$ at $p$ and all the spaces
  in the front square is a base-change 
  of the diagram (\ref{map:sym})
 by $\widehat{\mathrm{Sym}}^d(S)_p \to \mathrm{Sym}^d(S)$, 
 and similarly for the back square. 
 The commutativity of
 the bottom square is classical (see the constructions in Subsection \ref{subquasi-BPS}). 
 The left square is commutative since the 
 isomorphism $\widehat{\mathcal{M}}_S(d)_p \stackrel{\cong}{\to} \widehat{\mathcal{M}}_{\mathbb{C}^2}(d)_q$
  extends to an equivalence
 \begin{align}\label{equiv:MSp}
 \widehat{\mathfrak{M}}_S(d)_p
 \stackrel{\sim}{\to}\widehat{\mathfrak{M}}_{\mathbb{C}^2}(d)_q
 \end{align}
 and we have the  
 induced isomorphisms $\alpha$ on $(-1)$-shifted cotangent stacks. 
 
 The support of sheaves is respected under the isomorphisms in \eqref{diagXc2}.
 Further, (\ref{equiv:MSp}) induces an equivalence
 \[\mathbb{T}_X(d)_{w, p}\stackrel{\sim}{\to} \mathbb{T}_{\mathbb{C}^3}(d)_{w, q}.\] 
 Let $E\in \mathbb{T}_{\mathbb{C}^3}(d)_{w, q}$ be the image of $F|_{\widehat{\mathfrak{M}}_S(d)_p}$ under the above equivalence. Then \[\alpha\left(\mathrm{Supp}^{\rm{sg}}(F|_{\widehat{\mathfrak{M}}_S(d)_p})\right)=
 \alpha\left(\mathrm{Supp}^{\rm{sg}}(F) \times_{\mathcal{M}_S(d)} \widehat{\mathcal{M}}_S(d)_p  \right)=
 \mathrm{Supp}^{\rm{sg}}(E).\] In the proof of \cite[Theorem 3.1]{PT1}, we showed that $\mathrm{Supp}^{\rm{sg}}(E)\subset (\pi^3_d)^{-1}(\Delta)$. The equivalence (\ref{equiv:MSp}) respect the singular support, and we thus obtain the desired conclusion.
\end{proof}

\section{The categorical DT/PT correspondence for local surfaces}
In this section, we prove Theorem~\ref{thm:main}. 

\subsection{DT/PT categories for local surfaces}\label{sub:dtptdef}
Let $S$ be a smooth projective surface. Its corresponding local surface is the non-compact Calabi-Yau 3-fold
\begin{align*}
	X:= \mathrm{Tot}_S(K_S) \stackrel{\tau}{\to} S. 
	\end{align*}
	Here, $\tau$ is the natural projection. 
Let $\beta \in H_2(S, \mathbb{Z})$ and $n \in \mathbb{Z}$. Recall the 
DT moduli space
\begin{align*}
    I_X(\beta, n)
\end{align*}
which parametrizes ideal sheaves 
$I_C$ for $C \subset X$ compactly supported and with $\dim C \leq 1$, 
$\tau_{\ast}[C]=\beta$, and $\chi(\mathcal{O}_C)=n$.
Recall also the PT moduli space \begin{align*}
    P_X(\beta, n)
\end{align*}
of PT stable pairs $(\mathcal{O}_X \stackrel{s}{\to} F)$,
where $F$ is a compactly supported pure one-dimensional sheaf on $X$, 
and $s$ has at most zero-dimensional cokernel
satisfying $\tau_{\ast}[F]=\beta$ and $\chi(F)=n$, where $[F]$ is the 
fundamental one-cycle associated with $F$.
	
We denote by $\mathfrak{M}_S^{\dag}(\beta, n)$ the derived moduli stack of 
pairs $(F, s)$, where $F \in \Coh_{\leq 1}(S)$ has one-dimensional support 
$\beta$, $\chi(F)=n$, and $s \colon \mathcal{O}_S \to F$ is a morphism.   
We refer to~\cite[Section~4.1.1]{T} for the 
derived structure on $\mathfrak{M}_S^{\dag}(\beta, n)$. 
The derived stack $\mathfrak{M}_S^{\dag}(\beta, n)$ is quasi-smooth, 
and its $(-1)$-shifted cotangent stack
\begin{align*}
	\mathcal{M}_X^{\dag}(\beta, n) :=t_0(\Omega_{\mathfrak{M}_S^{\dag}(\beta, n)}[-1])
	\stackrel{\tau_{\ast}}{\to} \mathfrak{M}_S^{\dag}(\beta, n)
	\end{align*}
	is isomorphic to the moduli stack of 
	objects in the extension closure 
	\begin{align*}
\mathcal{A}_X=\langle \mathcal{O}_X, \Coh_{\leq 1}(X)[-1] \rangle_{\rm{ex}},
\end{align*}
see \cite[Theorem 4.1.3.(ii)]{T}.
The category $\mathcal{A}_X$ is called \textit{the category of
	D0-D2-D6 bound states} in~\cite[Subsection~4.1.2]{T}. 
	In particular, there are open immersions
	\begin{align*}
	    I_X(\beta, n) \subset \mathcal{M}_X^{\dag}(\beta, n) \supset P_X(\beta, n). 
	\end{align*}
		In general, the stack $\mathfrak{M}_S^{\dag}(\beta, n)$ is too big, for example it is not of finite type. 
Let $\mathfrak{M}_S^{\dag}(\beta, n)_{\rm{fin}} \subset \mathfrak{M}_S^{\dag}(\beta, n)$
be an open substack of finite type which contains 
both of $\tau_{\ast}(I_X(\beta, n))$ and $\tau_{\ast}(P_X(\beta, n))$. 
Then there are open immersions
\begin{align*}
    I_X(\beta, n) \subset t_0(\Omega_{\mathfrak{M}_S^{\dag}(\beta, n)_{\rm{fin}}}[-1])
    \supset P_X(\beta, n). 
\end{align*}
Let $\mathcal{Z}_I, \mathcal{Z}_P$ be the complements of the above open immersions. 
Following the construction in Subsection~\ref{subsec:ssupport}, the 
DT/PT categories are defined as follows: 
\begin{defn}(\cite[Definition~4.2.1]{T})\label{def:DTPTcat}
	The DT/PT categories are defined to be the following singular 
	support quotients: 
	\begin{align*}
		&\mathcal{DT}_X(\beta, n):=
		D^b(\mathfrak{M}_S^{\dag}(\beta, n)_{\rm{fin}})/\mathcal{C}_{\mathcal{Z}_I}, \\ 
		&\mathcal{PT}_X(\beta, n):=
		D^b(\mathfrak{M}_S^{\dag}(\beta, n)_{\rm{fin}})/\mathcal{C}_{\mathcal{Z}_P}. 
		\end{align*}
	\end{defn}
	\begin{remark}
	The DT/PT categories in Definition~\ref{def:DTPTcat}
are independent of a choice of $\mathfrak{M}_S^{\dag}(\beta, n)_{\rm{fin}}$
up to equivalence, see~\cite[Lemma~3.2.9]{T}.
\end{remark}
	\subsection{The moduli of pairs for reduced curve classes}
Suppose that $\beta$ is a reduced class, i.e. any effective divisor on $S$ with 
class $\beta$ is a reduced divisor. 
In this case, we can take $\mathfrak{M}_S^{\dag}(\beta, n)_{\rm{fin}}$
which admits a good moduli space as we explain below. 
Let 
\begin{align*}
\mathfrak{T}_S(\beta, n) \subset \mathfrak{M}_S^{\dag}(\beta, n)
\end{align*}
be the open substack consisting of $(F, s)$ such that $s$ has at most zero-dimensional 
cokernel. 
If $\beta$ is reduced, then 
both of $\tau_{\ast}(I_n(X, \beta))$ and $\tau_{\ast}(P_n(X, \beta))$ are contained in 
$\mathfrak{T}_S(\beta, n)$. 
Indeed, when $\beta$ is a reduced curve class, by~\cite[Lemma~5.5.4]{T} we have that
\begin{align*}
\mathcal{T}_X(\beta, n):=t_0(\Omega_{\mathfrak{T}_S(\beta, n)}[-1])
\end{align*}
is isomorphic to the moduli stack of pairs $(\mathcal{O}_X \stackrel{s}{\to} E)$, 
where $E \in \Coh_{\leq 1}(X)$ is compactly supported and $s$ 
has at most zero-dimensional cokernel. 
Moreover, the stack $\mathfrak{T}_S(\beta, n)$ is of finite type, as the 
pairs $(F, s)$ with zero-dimensional cokernel are bounded. 
Therefore, if $\beta$ is reduced, we can take 
\begin{align}\label{MdagT}
    \mathfrak{M}_S^{\dag}(\beta, n)_{\rm{fin}}=\mathfrak{T}_S(\beta, n). 
\end{align}
In what follows, we take $\beta$ to be reduced and $\mathfrak{M}_S^{\dag}(\beta, n)_{\rm{fin}}$
to be (\ref{MdagT}). 

Let $\mathcal{T}_S(\beta, n)$ be the classical truncation of 
$\mathfrak{T}_S(\beta, n)$. 
As discussed in~\cite[Section~4.2.1, Section~6.3.2]{T}, 
the stack $\mathcal{T}_S(\beta, n)$ admits a good moduli space
\begin{align}\label{mor:TS}
	\pi_S\colon \mathcal{T}_S(\beta, n) \to T_S(\beta, n)
	=\mathrm{Chow}^{\beta}(S) \times \mathrm{Sym}^d(S).
	\end{align}
In the above, $\mathrm{Chow}^{\beta}(S)$	
	is the moduli space of effective divisors
	in $S$ with class $\beta$ 
	(which is 
	a stratified projective bundle 
	over $\mathrm{Pic}^{\beta}(S)$), 
	and $d$ is determined by 
	\begin{align}\label{computed}
	    -\frac{1}{2}\beta^2-\frac{1}{2}K_S \beta+d=n. 
	\end{align}
	The first two terms correspond to 
	$\chi(\mathcal{O}_C)$ for $[C] \in \mathrm{Chow}^{\beta}(S)$. 
	The morphism (\ref{mor:TS}) is given by 
	\begin{align*}
	(\mathcal{O}_S \stackrel{s}{\to} F)
	\mapsto (\mathrm{Supp}(F), \mathrm{Supp}(F_{\rm{tor}})
	+\mathrm{Supp}(\mathrm{Cok}(s))),
	\end{align*}
	where $F_{\rm{tor}} \subset F$ is the maximal zero-dimensional subsheaf. 
	
	For $y=(C, p) \in T_S(\beta, n)$
	with $p=\sum_{j=1}^m d^{(j)}x^{(j)}$
	such that 
	$x^{(1)}, \ldots, x^{(m)} \in S$ are
	mutually distinct, 
the unique closed point 
in the fiber of (\ref{mor:TS}) at $y$ 
corresponds to a direct sum 
\begin{align}\label{rep:dsum}
	I=(\mathcal{O}_S \to F)=(\mathcal{O}_S \twoheadrightarrow \mathcal{O}_C) \oplus \bigoplus_{j=1}^m 
	V^{(j)} \otimes (0 \to \mathcal{O}_{x^{(j)}}),
	\end{align}
	where
$V^{(j)}$ is a finite dimensional vector space with $\dim V^{(j)}=d^{(j)}$.

\subsection{Window categories for DT/PT categories}

\subsubsection{Construction of window categories}
Let $\beta$ be a reduced class. 
Consider the universal pair 
\begin{align*}
\mathcal{I}=(\mathcal{O}_{S\times\mathcal{T}_S(\beta,n)}\to \mathcal{F}) \in D^b(S \times \mathcal{T}_S(\beta, n)). 
\end{align*}
Denote by $\pi_\mathcal{T}\colon S\times\mathcal{T}_S(\beta,n)\to \mathcal{T}_S(\beta,n)$ the projection onto the second factor. Let \[\mathscr{L}:=\det\left(Rp_{\mathcal{T}*}\mathcal{F}\right)\in\mathrm{Pic}(\mathcal{T}_S(\beta,n)).\]
By \cite[Proposition 5.5.2]{T}, we have that
\[I_X(\beta, n)=\mathcal{T}_X(\beta, n)^{\mathscr{L}\text{-ss}},\, P_X(\beta, n)=\mathcal{T}_X(\beta, n)^{\mathscr{L}^{-1}\text{-ss}}.\]
We further take 
\begin{align*} 
b:=2\ch_2(Rp_{\mathcal{T}*}\mathcal{F}) \in H^4(\mathcal{T}_S(\beta, n), \mathbb{Q}). 
\end{align*}
Its pull-back to $H^4(\mathcal{T}_X(\beta, n), \mathbb{Q})$ is also denoted by $b$
and it is positive definite, see the argument of~\cite[Lemma~7.25]{Totheta}. 
Then $(\mathcal{L}, b)$
determine $\Theta$-stratifications
\begin{align}\label{theta:strata}
    \mathcal{T}_X(\beta, n)&=\mathcal{S}_0^I \sqcup \cdots \sqcup \mathcal{S}_d^I \sqcup I_X(\beta, n) \\
 \notag &=\mathcal{S}_0^P \sqcup \cdots \sqcup \mathcal{S}_d^P \sqcup P_X(\beta, n),
  \end{align}
  where $\mathcal{S}_i^I$ and $\mathcal{S}_i^P$ are given by 
  \begin{align*}
      &\mathcal{S}_i^I=\{(\mathcal{O}_X \stackrel{s}{\to} F) : \chi(\mathrm{Cok}(s))=d-i\}, \\ 
      &\mathcal{S}_i^P=\{(\mathcal{O}_X \stackrel{s}{\to} F) : \chi(F_{\rm{tor}})=d-i\},
  \end{align*}
  where $d$ is given by (\ref{computed}).
  The same stratifications are constructed in~\cite[Subsection~6.2.1]{T} without using $b$, 
  and it is straightforward to show that they are $\Theta$-stratifications 
  with respect to $(\mathcal{L}, b)$. 

Since $\mathcal{T}_S(\beta, n)$ admits 
a good moduli space, 
we can apply Theorem~\ref{thm:window:M} to obtain the following
window theorem for DT/PT categories
(which is also obtained in~\cite[Theorem~1.4.6]{T}): 
\begin{thm}\emph{(\cite[Theorem~1.4.6]{T})}\label{thmtodawindow}
	For each maps $k_I, k_P \colon \mathbb{Z}_{\geq 1} \to \mathbb{R}$, there exist
	dg-subcategories 
	\begin{align}\label{subcat:W}
		\mathbb{W}_I \subset D^b(\mathfrak{T}_S(\beta, n)), 
		\mathbb{W}_P \subset D^b(\mathfrak{T}_S(\beta, n))
		\end{align}
	such that 
	the following composition functors are equivalences 
	\begin{align*}
		&\mathbb{W}_I \hookrightarrow D^b(\mathfrak{T}_S(\beta, n))
		\twoheadrightarrow \mathcal{DT}_X(\beta, n) \\
		&\mathbb{W}_P \hookrightarrow D^b(\mathfrak{T}_S(\beta, n))
		\twoheadrightarrow \mathcal{PT}_X(\beta, n). 
		\end{align*}
	\end{thm}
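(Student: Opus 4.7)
The plan is to deduce Theorem~\ref{thmtodawindow} by applying Theorem~\ref{thm:window:M} to the quasi-smooth derived stack $\mathfrak{T}_S(\beta, n)$ with the two linearizations $\mathscr{L}$ (to produce $\mathbb{W}_I$) and $\mathscr{L}^{-1}$ (to produce $\mathbb{W}_P$), together with the positive-definite class $b = 2\ch_2(Rp_{\mathcal{T}*}\mathcal{F})$. The quotient construction of the DT/PT categories from Definition~\ref{def:DTPTcat} uses the finite-type substack $\mathfrak{M}_S^{\dag}(\beta, n)_{\rm{fin}} = \mathfrak{T}_S(\beta, n)$, which is available by~\eqref{MdagT} since $\beta$ is reduced, and the conical closed substacks $\mathcal{Z}_I = \bigsqcup_i \mathcal{S}_i^I$, $\mathcal{Z}_P = \bigsqcup_i \mathcal{S}_i^P$ are exactly the complements of $I_X(\beta, n)$ and $P_X(\beta, n)$ inside $\mathcal{T}_X(\beta, n)$. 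Thus matching the output of Theorem~\ref{thm:window:M} to the target categories is formal once the hypotheses are verified.

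First I would verify the three inputs required by Theorem~\ref{thm:window:M}. The good moduli space $\pi_S \colon \mathcal{T}_S(\beta, n) \to T_S(\beta, n)$ is given in~\eqref{mor:TS}. Positive-definiteness of $b$ on $\mathcal{T}_X(\beta, n)$ is asserted in the statement preceding~\eqref{theta:strata}, and follows from the argument of~\cite[Lemma~7.25]{Totheta} by expressing, at a closed point of the form~\eqref{rep:dsum} with stabilizer $G_y = \prod_j GL(V^{(j)})$, the pairing $\langle \lambda, b\rangle$ as a positive quadratic form in $\lambda$. Finally, the $\Theta$-stratifications~\eqref{theta:strata} with respect to $(\mathcal{L}^{\pm 1}, b)$ are the ones already constructed set-theoretically in~\cite[Subsection~6.2.1]{T}; I would verify they agree with the Harder--Narasimhan stratifications for $(\mathcal{L}^{\pm 1}, b)$ by computing that at a closed point~\eqref{rep:dsum} the unique destabilizing one-parameter subgroup with respect to $\mathcal{L}$ is the one that scales the zero-dimensional summands $V^{(j)} \otimes \mathcal{O}_{x^{(j)}}$ negatively (so that the discriminant $\chi(\mathrm{Cok}(s))$ measures instability), and dually that with respect to $\mathcal{L}^{-1}$ the destabilizing subgroup scales the torsion part of $F$ negatively, matching the invariant $\chi(F_{\rm{tor}})$.

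Next I would establish the formal neighborhood theorem for $\pi_S$ at a closed point $y = (C, p) \in T_S(\beta, n)$. Following Subsection~\ref{windowfirst} and the analogous treatment of $\mathfrak{M}_S(d)$ in Subsection~\ref{subquasi-BPS}, one identifies $\widehat{\mathcal{H}}^0(\mathbb{T}_{\mathfrak{T}_S(\beta, n)}|_y)$ and $\mathcal{H}^1(\mathbb{T}_{\mathfrak{T}_S(\beta, n)}|_y)$ with the $G_y$-equivariant formal completions of the $\Ext^1$ and $\Ext^2$ groups of the pair~\eqref{rep:dsum} in the category $\mathcal{A}_X$, and writes down the Kuranishi map $\kappa_y$ from the $L_\infty$-structure on the obstruction theory of the pair moduli problem. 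Once this is in place, Theorem~\ref{thm:window:M} applied to $(\mathscr{L}, b)$ and $(\mathscr{L}^{-1}, b)$ respectively yields subcategories $\mathbb{W}_I, \mathbb{W}_P \subset D^b(\mathfrak{T}_S(\beta, n))$ whose composition with the quotient functors realizes the desired equivalences, with the choice of real vector $w = (w_i)$ in the window being determined by the input maps $k_I, k_P$.

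The main obstacle will be the verification in step (iii): namely, to show that the set-theoretic stratifications~\eqref{theta:strata} defined in terms of $\chi(\mathrm{Cok}(s))$ and $\chi(F_{\rm{tor}})$ really are $\Theta$-stratifications in the sense of Halpern-Leistner with respect to $(\mathcal{L}^{\pm 1}, b)$, not merely point-set decompositions. This requires matching the scheme-theoretic structure of the strata with that of the HN loci associated to the numerical invariant derived from $(\mathcal{L}, b)$, and checking that the numerical invariant is strictly monotone along the induced filtration; the other hypotheses reduce to results already in~\cite{T, Totheta}.
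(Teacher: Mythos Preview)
Your proposal is correct and takes essentially the same approach as the paper: the paper simply states that since $\mathcal{T}_S(\beta, n)$ admits a good moduli space one can apply Theorem~\ref{thm:window:M}, and you have spelled out in more detail the hypotheses (good moduli space, positive-definite $b$, $\Theta$-stratifications, formal neighborhood theorem) that need to be checked. One small correction: the tangent complex at $y$ is computed as $\mathcal{H}^i(\mathbb{T}_{\mathfrak{T}_S(\beta,n)}|_y) = \Hom^i_S(I, F)$ in the derived category of $S$ (see~\eqref{htangent}), not as Ext groups in the category $\mathcal{A}_X$; this does not affect the structure of your argument.
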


  \begin{remark}
  In~\cite[Section~6]{T}, the existence of stratifications (\ref{theta:strata}) 
  and Theorem~\ref{thmtodawindow} are proved 
  without using $b$, so in the arguments below 
  the use of $b$ is not relevant. 
  \end{remark}
	
	\subsubsection{The formal local descriptions of window categories}
The subcategories (\ref{subcat:W}) are characterized in terms of Koszul 
duality equivalences on each formal fiber 
along with the good moduli space $\mathcal{T}_S(\beta, n)\to T_S(\beta, n)$, see \eqref{wt:cond}, \eqref{Wasty}, and Proposition \ref{prop:local}. 
Below we make it more explicit. 
Let $y \in T_S(\beta, n)$ be a closed point 
represented by a direct sum (\ref{rep:dsum}). 
We also denote by $y \in \mathcal{T}_S(\beta, n)$
the corresponding closed point. 
Then 
\begin{align*}
	G_y := \mathrm{Aut}(y)=\prod_{j=1}^m \GL(V^{(j)}), \ 
	T_y=\prod_{j=1}^m T(d^{(j)}) \subset G_y
	\end{align*}
	where $T_y$ is the maximal torus of $G_y$. 
The derived stack $\mathfrak{T}_S(\beta, n)$ 
satisfies the formal neighborhood theorem, see~\cite[Lemma~7.4.3]{T}. 
Moreover, we have 
\begin{align}\label{htangent}
    \mathcal{H}^i(\mathbb{T}_{\mathfrak{T}_S(\beta, n)}|_{y})=\Hom^i_S(I, F)
\end{align} for every $i\in\mathbb{Z}$.
	Here, we regard
	$I$ as a two term complex 
	$(\mathcal{O}_S \to F)$
	with $\mathcal{O}_S$ located in degree zero, 
	and $\Hom$ denotes morphisms in the derived category $D^b(S)$. 
Then the formal fiber $\widehat{\mathfrak{T}}_S(\beta, n)_y$
of $\mathfrak{T}_S(\beta, n)$ at $y \in T_S(\beta, n)$
is a quotient stack by the $G_y$-action 
of the derived zero locus of a $G_y$-equivariant morphism 
\begin{align*}
	\kappa_y \colon \widehat{\Hom}_S(I, F) \to \Hom^1_S(I, F). 
	\end{align*}
We set
\begin{align*}
	\mathcal{X}_y:=\left(\widehat{\Hom}_S(I, F) \oplus  \Hom^1_S(I, F)^{\vee}\right)/G_y.
	\end{align*}
Consider the following regular function 
\begin{align*}
	f_y \colon \mathcal{X}_y \to \mathbb{C}, \ 
	(x, u) \mapsto \langle \kappa_y(x), u \rangle.
\end{align*}
The 
 Koszul duality in Theorem~\ref{thm:Kduality} gives the equivalence 
\begin{align}\label{Kos:y}
	\Phi_y \colon D^b(\widehat{\mathfrak{T}}_S(\beta, n)_y) \stackrel{\sim}{\to}
	\text{MF}^{\text{gr}}(\mathcal{X}_y, f_y). 	\end{align}
	

The following lemma shows that 
the stack $\mathcal{X}_y$ is a completion of 
the product of the stack of representations of DT/PT
quivers considered in~\cite{PT3}. 
\begin{lemma}\label{lem:Hom}
	As a $G_y$-representation, we have  
\begin{align*}
	&\Hom_S(I, F) \oplus \Hom^1_S(I, F)^{\vee} \\
	&=H^0(\mathcal{O}_C(C)) \oplus H^1(\mathcal{O}_C(C))^{\vee}
	\oplus \bigoplus_{j=1}^m\left( (V^{(j)})^{\oplus a^{(j)}+1} \oplus (V^{(j)\vee})^{\oplus a^{(j)}} 
	\oplus \mathrm{End}(V^{(j)})^{\oplus 3}\right)
	\end{align*}
where $a^{(j)}=1$ if $x^{(j)} \in C$ and $a^{(j)}=0$ if $x^{(j)} \notin C$, and where $G_y$ acts trivially on the first two summands and acts naturally on the other summands.
\end{lemma}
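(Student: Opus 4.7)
The plan is to compute $R\Hom_S(I, F)$ directly by decomposing both entries on $S$ and then summing $\mathrm{Hom}^0 \oplus (\mathrm{Hom}^1)^{\vee}$ of each bilinear piece. The key preliminary observation is that, although the identity~\eqref{rep:dsum} is an equality in the abelian category of pairs (where the $\mathcal{O}_S$'s are identified), when we view $I$ as the two-term complex $[\mathcal{O}_S \to F]$ in $D^b(S)$ the fact that each point summand $(0 \to \mathcal{O}_{x^{(j)}})$ has zero structure map yields the splitting
\begin{align*}
	I \simeq \mathcal{I}_C \oplus \bigoplus_{j=1}^m V^{(j)} \otimes \mathcal{O}_{x^{(j)}}[-1],
\end{align*}
where $\mathcal{I}_C = \mathcal{O}_S(-C)$ is the ideal sheaf of the Cartier divisor $C$. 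Combined with $F = \mathcal{O}_C \oplus \bigoplus_j V^{(j)} \otimes \mathcal{O}_{x^{(j)}}$, this decomposes $R\Hom_S(I, F)$ into four families of summands on which the $G_y$-action is manifest, acting naturally on all $V^{(j)}$-factors and trivially on the cohomology of the divisor.

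Three of these families are routine. Since $\mathcal{I}_C$ is a line bundle, $R\Hom_S(\mathcal{I}_C, \mathcal{O}_C) = R\Gamma(S, \mathcal{O}_C(C))$, producing $H^0(\mathcal{O}_C(C))$ in degree $0$ and $H^1(\mathcal{O}_C(C))$ in degree $1$; after dualization of the degree $1$ part, this supplies the first two summands of the formula. Again by the line bundle property, $R\Hom_S(\mathcal{I}_C, V^{(j)} \otimes \mathcal{O}_{x^{(j)}}) = V^{(j)}$ is concentrated in degree $0$, contributing one copy of $V^{(j)}$ per $j$ independently of $a^{(j)}$. Cross-point terms $R\Hom_S(\mathcal{O}_{x^{(j)}}[-1], \mathcal{O}_{x^{(k)}})$ vanish for $j \neq k$; for $j = k$ the Koszul complex of $\mathfrak{m}_{x^{(j)}}$ yields $\mathrm{Ext}^i_S(\mathcal{O}_{x^{(j)}}, \mathcal{O}_{x^{(j)}}) = \wedge^i T_{S, x^{(j)}}$ of ranks $1, 2, 1$, which paired with $V^{(j)\vee} \otimes V^{(j)}$ and dualized via the trace pairing produces the summand $\mathrm{End}(V^{(j)})^{\oplus 3}$.

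The remaining family $R\Hom_S(V^{(j)} \otimes \mathcal{O}_{x^{(j)}}[-1], \mathcal{O}_C)$ is where the indicator $a^{(j)}$ enters and is the one subtle point. Applying $R\Hom_S(\mathcal{O}_{x^{(j)}}, -)$ to the short exact sequence $\mathcal{I}_C \to \mathcal{O}_S \to \mathcal{O}_C$ and using $\mathrm{Ext}^i_S(\mathcal{O}_{x^{(j)}}, \mathcal{O}_S) = \mathbb{C} \cdot \delta_{i,2} = \mathrm{Ext}^i_S(\mathcal{O}_{x^{(j)}}, \mathcal{I}_C)$ (both via the Koszul resolution), the problem reduces to analyzing the induced map $\mathrm{Ext}^2(\mathcal{O}_{x^{(j)}}, \mathcal{I}_C) \to \mathrm{Ext}^2(\mathcal{O}_{x^{(j)}}, \mathcal{O}_S)$. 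Locally at $x^{(j)}$ this is multiplication by a local defining equation $f$ of $C$ on the length-one sheaf $\mathrm{Ext}^2(\mathcal{O}_{x^{(j)}}, \mathcal{O}_S) \simeq \mathcal{O}_{x^{(j)}}$, hence it vanishes precisely when $x^{(j)} \in C$. One obtains $\mathrm{Ext}^1(\mathcal{O}_{x^{(j)}}, \mathcal{O}_C) = \mathrm{Ext}^2(\mathcal{O}_{x^{(j)}}, \mathcal{O}_C) = \mathbb{C}$ when $a^{(j)} = 1$ and vanishing otherwise. After the $[1]$-shift this contributes $(V^{(j)\vee})^{\oplus a^{(j)}}$ to both $\mathrm{Hom}^0$ and $\mathrm{Hom}^1$; combining with the copy of $V^{(j)}$ from the second family and dualizing $\mathrm{Hom}^1$ produces exactly $(V^{(j)})^{\oplus a^{(j)}+1} \oplus (V^{(j)\vee})^{\oplus a^{(j)}}$. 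The only genuine obstacle is that, although $\beta$ is reduced, $C$ may be singular at $x^{(j)}$; but since the pertinent Ext-sheaves involve $\mathcal{O}_{x^{(j)}}$ mapping into line bundles, the singularities of $C$ do not interfere with the local analysis of the connecting map.
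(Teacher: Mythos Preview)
Your proof is correct and follows essentially the same approach as the paper: decompose $I$ and $F$ into their summands and compute each bilinear piece of $R\Hom_S(I,F)$, arriving at exactly the formula~(\ref{HomIF}) the paper states ``by direct calculation''. You simply make explicit what the paper leaves implicit---the splitting $I \simeq \mathcal{I}_C \oplus \bigoplus_j V^{(j)}\otimes\mathcal{O}_{x^{(j)}}[-1]$ in $D^b(S)$ and the long-exact-sequence computation of $\mathrm{Ext}^i_S(\mathcal{O}_{x^{(j)}},\mathcal{O}_C)$---where the paper just records the answer and attributes it to $C$ being reduced.
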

\begin{proof}
Note that from (\ref{rep:dsum}) we have 
\begin{align*}
    F=\mathcal{O}_C \oplus \bigoplus_{j=1}^m V^{(j)} \otimes \mathcal{O}_{x^{(j)}}. 
\end{align*}
By direct calculation, we have  
	\begin{align}\label{HomIF}
		\Hom_S(I, F)=&H^0(\mathcal{O}_C(C))\oplus \\
	\notag	&
	\bigoplus_{j=1}^m\left( V^{(j)} \oplus (V^{(j)\vee} 
		\otimes \Hom_S^1(\mathcal{O}_{x^{(j)}}, \mathcal{O}_C))
		\oplus \mathrm{End}(V^{(j)})^{\oplus 2}\right), \\
		\notag	
		\Hom^1_S(I, F)=&H^1(\mathcal{O}_C(C))\oplus \\
		\notag	&
		\bigoplus_{j=1}^m\left((V^{(j)\vee} 
		\otimes \Hom_S^2(\mathcal{O}_{x^{(j)}}, \mathcal{O}_C))
		\oplus \mathrm{End}(V^{(j)})\right). 
		\end{align}
	Since $C$ is a reduced divisor, we have 
\begin{align*}	
	\Hom_S^1(\mathcal{O}_{x^{(j)}}, \mathcal{O}_C)=
	\Hom_S^2(\mathcal{O}_{x^{(j)}}, \mathcal{O}_C)=\begin{cases}
		\mathbb{C}, & x^{(j)} \in C, \\
		0, & x^{(j)} \notin C. 
		\end{cases}
	\end{align*}
\end{proof}
We denote by $\chi_0^{(j)}$ and $\chi_0$ the determinant characters
\begin{align}\label{dchar}
	\chi_0^{(j)} \colon \GL(V^{(j)}) \to \mathbb{C}^{\ast}, \ 
	g^{(j)} \mapsto \det (g^{(j)}), \ 
	\chi_0=\prod_{j=1}^m \chi_0^{(j)} \colon G_y \to \mathbb{C}^{\ast}. 
	\end{align}
	The data
	$(\mathcal{L}, b)$
	pulled back to $\mathrm{Crit}(f_y)$ is induced from 
	$BG_y$. By writing it as $(\mathcal{L}_y, b_y)$, 
	we have $\mathcal{L}_y=\chi_0$ 
	and $b_y \in H^4(BG_y, \mathbb{Q})$ corresponds to the 
	Weyl-invariant norm on the set of cocharacters $\lambda=\prod_{j=1}^m (\lambda_i^{(j)})_{i=1}^{d^{(j)}}$
	of $\prod_{j=1}^m T(d^{(j)})$
given by 
\begin{align}\label{Wnorm}
    \lvert \lambda \rvert^2=\sum_{j=1}^m \sum_{i=1}^{d^{(j)}} (\lambda_i^{(j)})^2. 
\end{align}
We have the Kempf-Ness stratifications 
\begin{align}\label{KN:X}
	\mathcal{X}_y=\mathscr{S}_1^{\pm} \sqcup \cdots \sqcup \mathscr{S}_{N^{\pm}}^{\pm}
	\sqcup \mathcal{X}_y^{\chi_0^{\pm}\mathchar`-\rm{ss}}
	\end{align}
of $\mathcal{X}_y$ with respect to $\chi_0^{\pm 1}$ and a Weyl-invariant 
norm (\ref{Wnorm}), see~\cite[Section~5.1.5]{T}. 
The restrctions of the above stratifications to $\mathrm{Crit}(f_y)$
coincides with the restrictions of (\ref{theta:strata}). 
We note that the restrictions of (\ref{theta:strata}) to $\mathrm{Crit}(f_y)$
may be decomposed into several connected components, and these components 
are distinguished in 
(\ref{KN:X}), so $N^{\pm}$ may be larger than $d$. 
Let $\lambda_i^{\pm} \colon \mathbb{C}^{\ast} \to G_y$ be
the associated cocharacter and $\mathscr{Z}_i^{\pm} \subset \mathscr{S}_i^{\pm}$
the center of $\mathscr{S}_i^{\pm}$. 
By~\cite[Lemma~5.1.9]{T}, for each $1\leq i\leq N^{\pm}$ and $1\leq j\leq m$, there is a decomposition 
\begin{align}\label{decom:Vij}
	V^{(j)}=V_1^{(j)} \oplus V_2^{(j)}, \ 1\leq j\leq m,
	\end{align}
such that $\lambda_i^{\pm}$ is the cocharacter 
\begin{align*}
	\lambda_i^{\pm}(t)=\{(\id, t^{\mp 1} \id)\}_{1\leq j\leq m} \subset 
	\prod_{j=1}^m \GL(V_1^{(j)}) \times \GL(V_2^{(j)})
	\subset G_y. 
	\end{align*}
	Note that the decompositions (\ref{decom:Vij}) depend on $i$.
We set
\begin{align}
\label{decompose:I}
I_1=(\mathcal{O}_S \to F_1) &:=
	(\mathcal{O}_S \twoheadrightarrow \mathcal{O}_C) \oplus 
	\bigoplus_{j=1}^{m}V_1^{(j)} \otimes (0 \to \mathcal{O}_{x^{(j)}}), \\
	\notag F_2 &:=\bigoplus_{j=1}^m V_2^{(j)} \otimes \mathcal{O}_{x^{(j)}}. 
	\end{align}
		Then we have the decompositions 
	\begin{align}\label{decom:IF12}
	I=I_1 \oplus (0 \to F_2), \ F=F_1 \oplus F_2. 
	\end{align}
We also set $n_2 :=\chi(F_2)=\sum_{j=1}^m \dim V_2^{(j)}$ and let
\begin{align*}
	O_{12}:= \Hom^1_S(I_1, F_2), \ 
	O_{21} := \Hom^2_S(F_2, F_1). 
	\end{align*}
	Define the widths $\eta_i^{\pm}$ of window categories by 
\begin{align*}
    \eta_i^{\pm}:=\langle \lambda_i^{\pm},  N^{\vee}_{\mathscr{S}_i^{\pm}/\mathcal{X}_y}|_{\mathscr{Z}_i^{\pm}}\rangle
    \in \mathbb{Z}. 
\end{align*}
We define the subcategories 
\begin{align*}
	\mathbb{V}_{I, y} \subset \text{MF}^{\text{gr}}(\mathcal{X}_y, f_y), \ 
	\mathbb{V}_{P, y} \subset \text{MF}^{\text{gr}}(\mathcal{X}_y, f_y)
	\end{align*}
to be consisting of objects $\mathcal{F}$ such that for each center $\mathscr{Z}_i^{\pm}$ we have 
\begin{align}\label{wt:cond}
	&\mathrm{wt}_{\lambda_i^+}(\mathcal{F}|_{\mathscr{Z}_i^+})
	\subset [ -k_I(n_2)-\dim O_{21}, -k_I(n_2)-\dim O_{21}+\eta_i^+), \\
	\notag &\mathrm{wt}_{\lambda_i^-}(\mathcal{F}|_{\mathscr{Z}_i^-})
	\subset [ k_P(n_2)-\dim O_{12}, k_P(n_2)-\dim O_{12}+\eta_i^-). 
	\end{align}
The widths $\eta^{\pm}_i$ are computed as follows: 
\begin{lemma}\label{lem:eta}
We set $n_1^{(j)}:= \dim V_1^{(j)}$ and $n_2^{(j)}:= \dim V_2^{(j)}$.  
We have the equalities
\begin{align}\label{eta+}
	\eta_i^+ &= \hom(I_1, F_2)+\hom^2(F_2, F_1)-\hom^{-1}(I_1, F_2) \\
\notag
&= \sum_{j=1}^m n_2^{(j)}(a^{(j)}+1+2n_1^{(j)}) \\
\notag	\eta_i^- &= \hom^1(F_2, F_1)+\hom^1(I_1, F_2)-\hom(F_2, F_1) \\
\notag &= \sum_{j=1}^m n_2^{(j)}(a^{(j)}+2n_1^{(j)}).
\end{align}
\end{lemma}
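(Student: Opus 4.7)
The plan is to compute $\eta_i^{\pm}$ directly from its definition as a weight pairing on the conormal complex of $\mathscr{S}_i^{\pm}\hookrightarrow\mathcal{X}_y$ at the $G_y$-fixed origin $0\in V:=\widehat{\Hom}_S(I,F)\oplus\Hom_S^1(I,F)^{\vee}$, then translate the result into the claimed Ext expressions via the decomposition (\ref{decom:IF12}). Since the differential of the $G_y$-action map vanishes at the origin, $\mathbb{L}_{\mathcal{X}_y}|_0$ is the two-term complex with $V^{\vee}$ in degree $0$ and $\mathfrak{g}_y^{\vee}$ in degree $+1$ and zero differential. Taking the weight of the determinant of the strictly $\lambda_i^{\pm}$-positive weight part of this complex gives
\begin{align*}
\eta_i^{\pm} = \langle \lambda_i^{\pm},\,V^{\vee,\,\lambda_i^{\pm}>0}\rangle - \langle \lambda_i^{\pm},\,\mathfrak{g}_y^{\vee,\,\lambda_i^{\pm}>0}\rangle,
\end{align*}
with the minus reflecting the cohomological degree of $\mathfrak{g}_y^{\vee}$.

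Under $\lambda_i^{\pm}$, the decomposition (\ref{decom:Vij}) gives weight $0$ on $V_1^{(j)}$ and weight $\mp 1$ on $V_2^{(j)}$, so by (\ref{decom:IF12}) the complexes $I_1$ and $F_1$ have weight $0$ while $F_2$ (hence $F_2[-1]$) has weight $\mp 1$. Splitting each Ext group into the four pieces
\[
\Hom^k(I,F)=\Hom^k(I_1,F_1)\oplus\Hom^k(I_1,F_2)\oplus\Hom^k(F_2[-1],F_1)\oplus\Hom^k(F_2[-1],F_2)
\]
and selecting strictly positive-weight contributions in $V^{\vee}=\Hom(I,F)^{\vee}\oplus\Hom^1(I,F)$, the $\lambda_i^+$-positive survivors are $\Hom(I_1,F_2)^{\vee}$ (from dualizing the weight $-1$ piece) and $\Hom^1(F_2[-1],F_1)=\Hom^2(F_2,F_1)$, contributing $\hom(I_1,F_2)+\hom^2(F_2,F_1)$. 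The stacky correction $\mathfrak{g}_y^{\vee,\,\lambda_i^+>0}$ has dimension $\sum_j n_1^{(j)} n_2^{(j)}$, which I identify with $\hom^{-1}(I_1,F_2)$ by a direct computation using $I_1=\mathcal{O}_S(-C)\oplus\bigoplus_j V_1^{(j)}\otimes\mathcal{O}_{x^{(j)}}[-1]$ and $\Hom^0_S(\mathcal{O}_{x^{(j)}},\mathcal{O}_{x^{(j)}})=\mathbb{C}$. This proves the first equality for $\eta_i^+$; the case $\eta_i^-$ is symmetric, and the $\mathfrak{g}_y$-correction there matches $\hom(F_2,F_1)$.

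For the second (explicit) equality, I substitute from Lemma~\ref{lem:Hom} together with standard facts on a smooth surface: $\hom^i_S(\mathcal{O}_x,\mathcal{O}_x)=\binom{2}{i}$; $\hom^i_S(\mathcal{O}_x,\mathcal{O}_C)=a$ for $i\in\{1,2\}$ with $a=1$ if $x\in C$ and $0$ otherwise (using that $C$ is reduced); and $\Ext^{>0}_S(\mathcal{O}_S(-C),\mathcal{O}_x)=0$. Reading off the four decomposed Ext dimensions and summing over $j$ collapses the arithmetic to $\sum_j n_2^{(j)}(a^{(j)}+1+2n_1^{(j)})$ for $\eta_i^+$ and $\sum_j n_2^{(j)}(a^{(j)}+2n_1^{(j)})$ for $\eta_i^-$.

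The main delicate point is weight bookkeeping around the degree shift of $F_2$ inside the two-term complex $I=(\mathcal{O}_S\to F)$: the appearance of the seemingly mysterious $\hom^{-1}(I_1,F_2)$ (respectively $\hom(F_2,F_1)$) is not a negative-degree Ext in disguise but the algebraic shadow of the $\mathfrak{g}_y^{\vee}$ correction to the window width, and threading the shift carefully through the four-piece Ext splitting is the only subtle step.
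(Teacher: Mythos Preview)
Your proof is correct and follows essentially the same route as the paper's: both compute $\eta_i^{\pm}=\langle\lambda_i^{\pm},\mathbb{L}_{\mathcal{X}_y}^{\lambda_i^{\pm}>0}\rangle$ by writing the cotangent complex of the quotient stack as $V^{\vee}-\mathfrak{g}_y^{\vee}$ in K-theory and then decomposing under $I=I_1\oplus F_2[-1]$. The only cosmetic difference is that the paper identifies the stacky correction term as $\Hom_S^{-1}(I,F)^{\vee}$ from the outset (via $\mathcal{H}^{-1}(\mathbb{T}_{\mathfrak{T}_S(\beta,n)}|_y)=\Hom_S^{-1}(I,F)$) and then extracts $\hom^{-1}(I_1,F_2)$ from the splitting, whereas you compute $\langle\lambda_i^+,\mathfrak{g}_y^{\vee,\lambda_i^+>0}\rangle=\sum_j n_1^{(j)}n_2^{(j)}$ directly and match it to $\hom^{-1}(I_1,F_2)$ afterwards; the content is identical.
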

\begin{proof}
By the definition of Kempf-Ness stratifications, we have 
\begin{align*}
    \eta_i^{\pm}&=\langle \lambda_i^{\pm},  N^{\vee}_{\mathscr{S}_i^{\pm}/\mathcal{X}_y}|_{\mathscr{Z}_i^{\pm}}\rangle=\langle \lambda_i^{\pm}, \mathbb{L}_{\mathcal{X}_y}^{\lambda_i^{\pm}>0}\rangle \\
    &=\langle \lambda_i^{\pm}, (\Hom_S(I, F)^{\vee}+\Hom^1_S(I, F)-\Hom^{-1}_S(I, F)^{\vee})^{\lambda_i^{\pm}>0} \rangle, 
\end{align*}
where the third identity holds from the definition of $\mathcal{X}_y$ as a quotient stack. 
By substituting the decompositions (\ref{decom:IF12})
and taking account of the $\lambda_i^{\pm}$-weights, we obtain the first identities for $\eta^{\pm}_i$. 
The second identities follow by 
substituting the decompositions (\ref{decompose:I}) 
and straightforward calculations. 
\end{proof}

Recall the Koszul duality equivalence (\ref{Kos:y}). 
For $\ast \in \{I, P\}$, we define 
\begin{align}\label{Wasty}
	\mathbb{W}_{\ast, y} :=\Phi_y^{-1}(\mathbb{V}_{\ast, y}) \subset 
D^b(\widehat{\mathfrak{T}}_S(\beta, n)_y). 
	\end{align}
We have the following characterizations of the subcategories (\ref{subcat:W}) in terms 
of formal fibers:
\begin{prop}\label{prop:local}
	An object $\mathcal{E} \in D^b(\mathfrak{T}_S(\beta, n))$
	lies in $\mathbb{W}_I$ (resp. $\mathbb{W}_P)$ if and only if for any 
	closed point $y \in T_S(\beta, n)$, we have 
	\begin{align*}
		\mathcal{E}|_{\widehat{\mathfrak{T}}_S(\beta, n)_y}
		\in \mathbb{W}_{I, y}, \quad 
		(\mbox{resp.~}	\mathcal{E}|_{\widehat{\mathfrak{T}}_S(\beta, n)_y}
		\in \mathbb{W}_{P, y}). 
		\end{align*}
	\end{prop}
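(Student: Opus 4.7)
The proposition follows by applying the window theorem for quasi-smooth stacks (Theorem~\ref{thm:window:M}) to $\mathfrak{T}_S(\beta,n)$ and then matching its formal local criterion with the explicit condition~(\ref{wt:cond}) via Koszul duality. The derived stack $\mathfrak{T}_S(\beta,n)$ satisfies the formal neighborhood theorem by \cite[Lemma~7.4.3]{T}, and (\ref{theta:strata}) are $\Theta$-stratifications for the polarizations $(\mathscr{L}^{\pm 1},b)$. Hence Theorem~\ref{thm:window:M} produces, for any tuple $w=(w_j)\in\mathbb{R}^{N}$ (one $w_j$ per global stratum, indexed by $n_2=\chi(F_2)\geq 1$), window subcategories $\mathbb{W}_{I,w},\mathbb{W}_{P,w}\subset D^b(\mathfrak{T}_S(\beta,n))$ equivalent to $\mathcal{DT}_X(\beta,n),\mathcal{PT}_X(\beta,n)$, with the formal local description~(\ref{PhiEy}): $\mathcal{E}$ lies in the $I$-window iff, for every closed $y\in T_S(\beta,n)$,
\begin{align*}
\Phi_y\bigl(\mathcal{E}|_{\widehat{\mathfrak{T}}_S(\beta,n)_y}\bigr)\in \mathrm{MF}^{\mathrm{gr}}(\mathbb{G}_{w'},f_y), \quad w'_i= w_{j(i)}-\bigl\langle \lambda_i^{+},\mathcal{H}^1(\mathbb{T}_{\mathfrak{T}_S(\beta,n)}|_y)^{\lambda_i^{+}>0}\bigr\rangle,
\end{align*}
where the local Kempf--Ness strata on $\mathcal{X}_y$ are (\ref{KN:X}) and $j(i)$ labels the global stratum containing $\mathscr{S}_i^{+}$ (which depends only on $n_2$). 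The analogous formula holds for the $P$-window with $\lambda_i^{-}$.

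\textbf{Computing the shift.} By (\ref{htangent}), $\mathcal{H}^1(\mathbb{T}_{\mathfrak{T}_S(\beta,n)}|_y)=\Hom_S^1(I,F)$. Using Lemma~\ref{lem:Hom}, the decomposition (\ref{decom:IF12}), and the fact that $\lambda_i^{+}$ acts with weight $0$ on the $V_1^{(j)}$-factors and $-1$ on the $V_2^{(j)}$-factors, decomposing $\Hom_S^1(I,F)$ into $\lambda_i^{+}$-isotypic pieces yields
\begin{align*}
\bigl\langle\lambda_i^{+},\Hom_S^1(I,F)^{\lambda_i^{+}>0}\bigr\rangle=\sum_{j=1}^m n_2^{(j)}(a^{(j)}+n_1^{(j)})=\dim O_{21}.
\end{align*}
Indeed, the summand $H^1(\mathcal{O}_C(C))$ is $\lambda_i^{+}$-trivial; the summand $\bigoplus_j V^{(j)\vee}\otimes\Hom_S^2(\mathcal{O}_{x^{(j)}},\mathcal{O}_C)$ contributes the positive-weight piece $V_2^{(j)\vee}\otimes \mathbb{C}^{a^{(j)}}$, total $\sum_j n_2^{(j)}a^{(j)}$; and $\bigoplus_j \mathrm{End}(V^{(j)})$ contributes its positive-weight piece $V_2^{(j)\vee}\otimes V_1^{(j)}$, total $\sum_j n_2^{(j)}n_1^{(j)}$. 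The second equality is a direct computation of $\dim\Hom_S^2(F_2,F_1)$ using $\Ext_S^2(\mathcal{O}_x,\mathcal{O}_x)=\mathbb{C}$ and $\Ext_S^2(\mathcal{O}_x,\mathcal{O}_C)=\mathbb{C}^{a^{(j)}}$. A dual calculation for $\lambda_i^{-}$ gives $\langle\lambda_i^{-},\Hom_S^1(I,F)^{\lambda_i^{-}>0}\rangle=\dim O_{12}$, using that the positive weight part now sits inside $\bigoplus_j \mathrm{End}(V^{(j)})$ as $V_1^{(j)\vee}\otimes V_2^{(j)}$ of total dimension $\sum_j n_1^{(j)}n_2^{(j)}$, and the two-term nature of $I_1$ shows that the $H^1$ piece giving $\dim O_{12}$ equals this same sum.

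\textbf{Conclusion.} Substituting $\delta_i^{+}=\dim O_{21}$ (resp.~$\delta_i^{-}=\dim O_{12}$) into the shifted window condition $w'_i\in [w_{j(i)}-\delta_i^{\pm},\,w_{j(i)}-\delta_i^{\pm}+\eta_i^{\pm})$ of (\ref{PhiEy}) yields exactly the condition (\ref{wt:cond}) defining $\mathbb{V}_{I,y}$ (resp.~$\mathbb{V}_{P,y}$) upon setting $w_{j(i)}=-k_I(n_2)$ (resp.~$w_{j(i)}=k_P(n_2)$). Since $j(i)$ is a bijection between local strata and values of $n_2\in\mathbb{Z}_{\geq 1}$, the assignment $w\leftrightarrow k_I$ (resp.~$w\leftrightarrow k_P$) is a bijection, so Theorem~\ref{thmtodawindow}'s window $\mathbb{W}_I$ (resp.~$\mathbb{W}_P$) coincides with $\mathbb{W}_{I,w}$ (resp.~$\mathbb{W}_{P,w}$) for the matched choice. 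Combining this with the equivalence (\ref{Wasty}) identifies the membership $\mathcal{E}|_{\widehat{\mathfrak{T}}_S(\beta,n)_y}\in\mathbb{W}_{I,y}$ (resp.~$\mathbb{W}_{P,y}$) with~(\ref{PhiEy}), which is exactly the assertion of the proposition. The one delicate step is the identification $\delta_i^{\pm}=\dim O_{21},\dim O_{12}$; this is the only place where the precise structure of $\Hom_S^\bullet(I,F)$ from Lemma~\ref{lem:Hom} and the reduced hypothesis on $\beta$ (ensuring $\Ext_S^k(\mathcal{O}_{x^{(j)}},\mathcal{O}_C)$ is concentrated in degree $\leq 1$) enter nontrivially.
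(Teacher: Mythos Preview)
Your approach is essentially the same as the paper's: both apply the formal local criterion (\ref{PhiEy}) from Theorem~\ref{thm:window:M} and identify the shift term $\langle\lambda_i^{\pm},\mathcal{H}^1(\mathbb{T}_{\mathfrak{T}_S(\beta,n)}|_y)^{\lambda_i^{\pm}>0}\rangle$ with $\dim O_{21}$ (resp.\ $\dim O_{12}$), then compare with (\ref{wt:cond}). The paper does this in one line by using the decomposition $I=I_1\oplus(0\to F_2)$, $F=F_1\oplus F_2$ to write $\Hom_S^1(I,F)$ as a direct sum of four pieces with $\lambda_i^{+}$-weights $0,-1,+1,0$, so that the positive-weight piece is literally $\Hom_S^2(F_2,F_1)=O_{21}$; you instead expand via the explicit formula (\ref{HomIF}) and compute dimensions. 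Both are correct.

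Two small inaccuracies to fix. First, the claim that ``$j(i)$ is a bijection between local strata and values of $n_2$'' is false: several local Kempf--Ness strata on $\mathcal{X}_y$ can have the same $n_2=\sum_j n_2^{(j)}$ (the paper notes explicitly that $N^{\pm}$ may exceed $d$). What you actually need, and what holds, is that the global window parameter $w_{j(i)}$ depends on $i$ only through $n_2(i)$, so the assignment $n_2\mapsto -k_I(n_2)$ determines all the local shifts consistently. Second, your final parenthetical is wrong: the reduced hypothesis does \emph{not} force $\Ext_S^k(\mathcal{O}_{x^{(j)}},\mathcal{O}_C)$ to vanish for $k\geq 2$; on the contrary $\Ext_S^2(\mathcal{O}_x,\mathcal{O}_C)\cong\mathbb{C}$ when $x\in C$ (this is precisely what gives the $a^{(j)}$ contribution to $\dim O_{21}$). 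The role of reducedness is rather that both $\Ext^1$ and $\Ext^2$ are one-dimensional, as in Lemma~\ref{lem:Hom}. Neither point affects the validity of your main computation.
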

	\begin{proof}
	The proposition is proved in~\cite[Theorem~6.3.13]{T}. 
	It is also a 
	direct application of (\ref{PhiEy}). 
	From (\ref{htangent}) and (\ref{decom:IF12}), we have 
	\begin{align*}
	    \mathcal{H}^1(\mathbb{T}_{\mathfrak{T}_S(\beta, n)}|_{y})=
	    \Hom^1_S(I_1, F_1) \oplus \Hom^1_S(I_1, F_2) \oplus \Hom^2_S(F_2, F_1) \oplus \Hom^2_S(F_2, F_2). 
	\end{align*}
	Therefore we have 
	\begin{align*}
	  \mathcal{H}^1(\mathbb{T}_{\mathfrak{T}_S(\beta, n)}|_{y})^{\lambda_i^+>0}=O_{21}, \
	   \mathcal{H}^1(\mathbb{T}_{\mathfrak{T}_S(\beta, n)}|_{y})^{\lambda_i^->0}=O_{12}. 
	\end{align*}
	Then the proposition follows from (\ref{PhiEy}) and (\ref{wt:cond}). 
	\end{proof}

\subsubsection{Generators of window categories}
Below, let $0<\varepsilon \ll 1$ and let $k_I$ and $k_P$ be the constant functions
\[k_I(-)=0,\, k_P(-)=\varepsilon.\]
Recall that $\chi_0$ is the determinant character (\ref{dchar}), so $\chi_0=\sum_{j=1}^{m}\sum_{i=1}^{d^{(j)}}\beta_i^{(j)}$
in the notation of Subsection~\ref{subsub:const}. In this case, we have another description of 
$\mathbb{V}_{I, y}$, $\mathbb{V}_{P, y}$ as follows (see \cite[Proposition 2.6]{KoTo}, \cite[Proposition 3.13]{PT0}, and especially the ``magic window theorem" in \cite{hls} for similar statements where window categories are described by generating vector bundles): 
\begin{lemma}\label{lem:WIP}
	The subcategories $\mathbb{V}_{I, y}$, $\mathbb{V}_{P, y}$ 
	are generated by matrix factorizations whose factors are 
	generated by $\Gamma_{G_y}(\chi)\otimes \mathcal{O}_{\mathcal{X}_y}$ for a
	$G_y$-dominant weight $\chi$ of the maximal torus $T_y \subset G_y$
	satisfying 
	\begin{align}
	\label{wt:cond1}	&\chi+\rho_{G_y}+
	\delta \in \frac{3}{2}\mathrm{sum}\left[0, \beta_i^{(j)}-\beta_{i'}^{(j)}\right]
		+\mathrm{sum}\left[ -\frac{a^{(j)}}{2}\beta_i^{(j)}, \frac{a^{(j)}}{2}\beta_i^{(j)} \right]
		+\mathrm{sum}\left[-\beta_i^{(j)}, 0\right], \\
	\notag	&	\chi+\rho_{G_y}+\delta \in \frac{3}{2}\mathrm{sum}\left[0, \beta_i^{(j)}-\beta_{i'}^{(j)}\right]
		+\mathrm{sum}\left[ -\frac{a^{(j)}}{2}\beta_i^{(j)}, \frac{a^{(j)}}{2}\beta_i^{(j)} \right]
				\end{align}
	respectively, where the Minkowski sums above are after $1\leq j\leq m$, $1\leq i, i'\leq d^{(j)}$, 
	and
	the weight 
	$\delta$ is defined by 
	\begin{align*}
		\delta=-\sum_{j=1}^m \left(\frac{a^{(j)}}{2}+\varepsilon  \right)\chi_0^{(j)}, \ 
		0<\varepsilon \ll 1. 
		\end{align*}
	\end{lemma}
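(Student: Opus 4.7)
The plan is to deduce the lemma from the magic window theorem for DT/PT quivers established in \cite{PT3}, following the philosophy of \cite{SVdB, hls}. The subcategories $\mathbb{V}_{I,y}$ and $\mathbb{V}_{P,y}$ are defined via weight conditions on the Kempf-Ness centers $\mathscr{Z}_i^{\pm}$, while the asserted description is a polytope condition on the $T_y$-weights of the generating equivariant vector bundles $\Gamma_{G_y}(\chi)\otimes\mathcal{O}_{\mathcal{X}_y}$. The equivalence of these two formulations is precisely what a magic window theorem provides in this setting.

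First I would use Lemma \ref{lem:Hom} to split $(\mathcal{X}_y, f_y)$ as a product, modulo a trivial $G_y$-summand $H^0(\mathcal{O}_C(C)) \oplus H^1(\mathcal{O}_C(C))^\vee$ which is irrelevant to the weight analysis since $G_y$ acts trivially on it and the stratification is induced from $BG_y$. Up to this trivial factor, the stack becomes
\begin{equation*}
\prod_{j=1}^m \mathcal{X}^{af, a^{(j)}}(1, d^{(j)})
\end{equation*}
with $f_y$ decomposing as a sum of super-potentials on the DT/PT quivers $Q^{af, a^{(j)}}$ (no cross-terms arise because $\Hom_S(\mathcal{O}_{x^{(j)}}, \mathcal{O}_{x^{(j')}}) = 0$ for $j \ne j'$). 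The decompositions (\ref{decom:Vij}) only twist a single $V^{(j)}$, so the Kempf-Ness strata (\ref{KN:X}) and the weight conditions (\ref{wt:cond}) factor along this product, reducing the problem to a product of single-vertex statements.

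In each single-vertex factor, the proof of \cite[Proposition 3.12]{PT3} already identifies the window categories defined via Kempf-Ness weight conditions with $\mathbb{E}^{a^{(j)}}(1, d^{(j)}; \delta^{(j)})$ and $\mathbb{F}^{a^{(j)}}(1, d^{(j)}; \delta^{(j)})$, that is, with the polytope conditions in the variable $\chi^{(j)}$. To match the shift $\delta$ in our statement to those $\delta^{(j)}$, I combine three accounting contributions: (i) the recentering offsets $-\dim O_{21}$ and $-\dim O_{12}$ in (\ref{wt:cond}) which, using the $G_y$-weight content of Lemma \ref{lem:Hom}, produce shifts proportional to $a^{(j)}\chi_0^{(j)}$; (ii) the widths $\eta_i^{\pm}$ computed in Lemma \ref{lem:eta}, which symmetrize the polytope around the natural center where $\rho_{G_y}$ enters with its standard sign and produce the factor $a^{(j)}/2$; and (iii) the perturbations $k_I(n_2) = 0$, $k_P(n_2) = \varepsilon$, which shift $\delta$ by $\varepsilon\chi_0^{(j)}$ and convert the closed/half-open boundary distinction between $\mathbb{V}_{I,y}$ and $\mathbb{V}_{P,y}$ into the presence or absence of the extra Minkowski summand $\mathrm{sum}[-\beta_i^{(j)}, 0]$, which itself is accounted for by the asymmetry $(a^{(j)}+1)$ vs $a^{(j)}$ between $V^{(j)}$ and $V^{(j), \vee}$ in Lemma \ref{lem:Hom}.

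The main technical obstacle will be the bookkeeping in the last step: several offsets coming from $\rho_{G_y}$, from the dimensions of $O_{12}, O_{21}$, and from the $\eta_i^{\pm}$ must combine cleanly into $\delta = -\sum_j (a^{(j)}/2 + \varepsilon)\chi_0^{(j)}$. One also needs $0 < \varepsilon \ll 1$ to be small enough that no integer weight $\chi$ lies on the boundary of the polytope, so that the half-closed intervals implicit in \cite[Subsection 3.2.3]{PT3} can be replaced by closed intervals in the formulation of (\ref{wt:cond1}), yielding the clean polytope description stated in the lemma.
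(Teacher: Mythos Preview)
Your reduction to a product of single-vertex DT/PT quivers has a genuine gap. You claim that ``the decompositions (\ref{decom:Vij}) only twist a single $V^{(j)}$, so the Kempf-Ness strata (\ref{KN:X}) and the weight conditions (\ref{wt:cond}) factor along this product.'' But this is false: the paper's formula
\[
\lambda_i^{\pm}(t)=\{(\id, t^{\mp 1}\id)\}_{1\le j\le m}\subset\prod_{j=1}^m \GL(V_1^{(j)})\times\GL(V_2^{(j)})
\]
shows that each Kempf--Ness cocharacter acts simultaneously on \emph{all} of the $V^{(j)}$, with the strata indexed by tuples $(n_2^{(1)},\ldots,n_2^{(m)})$ rather than by a single factor. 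Consequently the weight condition (\ref{wt:cond}) for a given stratum couples all $m$ factors, and the factorization $\mathbb{V}_{I,y}=\boxtimes_{j}\mathbb{V}_{I,y^{(j)}}$ is not an immediate consequence of the definition. In fact, in the paper this product decomposition (\ref{prop:IP}) is derived \emph{from} Lemma~\ref{lem:WIP}, not used to prove it, so your argument is circular as written.

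The paper circumvents this by working directly with the whole stack $\mathcal{X}_y$. It first rewrites the half-open condition (\ref{wt:cond}) in symmetric form (\ref{wt:0}) by computing $\dim O_{21}$ in terms of $\eta_i^+$ and $\langle\lambda_i^+,\delta_0\rangle$; this is where the bookkeeping you anticipated actually lives. It then introduces an auxiliary \emph{symmetric} $G_y$-representation $U_y$ (obtained by adding one extra copy of each $V^{(j)\vee}$), checks that its window width $\eta_{\lambda_i^+}$ agrees with $\eta_i^+$, and observes that the polytope (\ref{wt:cond1}) is exactly half the weight polytope of $\wedge^{\bullet}U_y$. The inclusion $\mathbb{V}'_{I,y}\subset\mathbb{V}_{I,y}$ then follows from \cite[Lemma~2.9]{hls}. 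For essential surjectivity, the paper shows both categories are equivalent to $\mathrm{MF}^{\rm gr}(\mathcal{X}_y^{\chi_0\text{-ss}},f_y)$: for $\mathbb{V}_{I,y}$ this is the window theorem, while for $\mathbb{V}'_{I,y}$ one applies the magic window theorem of \cite{hls} to $U_y/G_y$ and pulls back along the closed immersion $\iota\colon\mathcal{X}_y\hookrightarrow U_y/G_y$, using that the semistable loci are compatible. Your appeal to \cite[Proposition~3.12]{PT3} on each factor would only become available once the product decomposition is established, which is exactly what remains to be proved.
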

\begin{proof}
	We only prove the case of $\mathbb{V}_{I, y}$. 
	Let $\delta_0$ be the following $G_y$-character
	\begin{align*}
	    \delta_0:=\det \Hom^1_S(I, F)=-\sum_{j=1}^m a^{(j)}\chi_0^{(j)},
	\end{align*}
	where the second equality follows from (\ref{HomIF}).
	In particular, we have $\delta=\delta_0/2-\varepsilon \chi_0$. 
		From the computation in~\cite[Theorem~6.3.18, (6.3.37)]{T},
	the condition (\ref{wt:cond}) 
	is equivalent to 
	\begin{align}\label{equiv:wt}
		\mathrm{wt}_{\lambda_i^+}(\mathcal{F}|_{\mathscr{Z}_i^+})
	\subset \left[ -\frac{1}{2}\eta_i^+ -\frac{1}{2}\langle \lambda_i^+, \delta_0 \rangle
	+\frac{1}{2}n_2, \frac{1}{2}\eta_i^+ -\frac{1}{2}\langle \lambda_i^+, \delta_0 \rangle
	+\frac{1}{2}n_2 \right). 
	\end{align}
	In fact, by Lemma~\ref{lem:eta} and 
	the decompositions (\ref{decom:IF12}) at each $\mathscr{Z}_i^+$, 
	we have 
	\begin{align*}
	    &\frac{1}{2}\eta_i^++\frac{1}{2}\langle \delta_0, \lambda_i^+ \rangle-\frac{1}{2}n_2 \\
	    &=\frac{1}{2}(\hom(I_1, F_2)+\hom^2(F_2, F_1)-\hom^{-1}(I_1, F_2)) \\
	    &\qquad \qquad \qquad \qquad +\frac{1}{2}(\hom^2(F_2, F_1)-\hom^1(I_1, F_2))-\frac{1}{2}n_2 \\
	    &=\hom^2(F_2, F_1)+\frac{1}{2}\chi(I_1, F_2)-\frac{1}{2}n_2 =\dim O_{21}. 
	\end{align*}
	Therefore (\ref{wt:cond}) is equivalent to (\ref{equiv:wt}). 
	Since $n_2=-\langle \lambda_i^+, \chi_0 \rangle$, 
the condition (\ref{equiv:wt}) is also equivalent to 
\begin{align}\label{wt:0}
		\mathrm{wt}_{\lambda_i^+}(\mathcal{F}|_{\mathscr{Z}_i^+})
	\subset \left[ -\frac{1}{2}\eta_i^+ -\big\langle \lambda_i^+, \frac{1}{2}\delta_0+\frac{1}{2}\chi_0-\varepsilon \chi_0 \big\rangle,  \frac{1}{2}\eta_i^+-\big\langle \lambda_i^+, \frac{1}{2}\delta_0+\frac{1}{2}\chi_0-\varepsilon \chi_0 \big\rangle \right].
	\end{align}

Let $U_y$ be the following symmetric $G_y$-representation 
\begin{align*}
	U_y=H^0(\mathcal{O}_C(C)) \oplus H^1(\mathcal{O}_C(C))^{\vee} \oplus 
	\bigoplus_{j=1}^m 
	(V^{(j)})^{\oplus a^{(j)}+1} \oplus (V^{(j)\vee})^{\oplus a^{(j)}+1} \oplus 
	\mathrm{End}(V^{(j)})^{\oplus 3},
	\end{align*}
	where $G_y$ acts trivially on the first two summands and acts naturally on the other summands.
	Denote by $\mathfrak{g}_y=\bigoplus_{j=1}^m \mathrm{End}(V^{(j)})$ the Lie algebra of $G_y$. 
	For each cocharacter $\lambda \colon \mathbb{C}^{\ast} \to T_y$,
	we set
	\begin{align*}
	    \eta_{\lambda}:=\langle \lambda, U_y^{\lambda>0}-\mathfrak{g}_y^{\lambda>0}\rangle.
	\end{align*}
A straightforward computation shows that 
\begin{align}\label{lambda:L}
\eta_{\lambda_i^+}=
	\sum_{j=1}^{m}n_2^{(j)}(a^{(j)}+1+2n_1^{(j)})=\eta_i^+. 
	\end{align}
		Let $\mathbb{V}_{I, y}'$ be the triangulated subcategory of 
$\text{MF}^{\text{gr}}(\mathcal{X}_y, f_y)$ of
matrix factorizations whose factors are generated by $\Gamma_{G_y}(\chi) \otimes \mathcal{O}_{\mathcal{X}_y}$
satisfying the condition (\ref{wt:cond1}), or equivalently 
\begin{align}\label{equiv:cond}
	&\chi+\rho_{G_y}+\frac{1}{2}\delta_0+\frac{1}{2}\chi_0-\varepsilon \chi_0
	 \\
	 \notag 
	 &\in \frac{3}{2}\mathrm{sum}\left[0, \beta_i^{(j)}-\beta_{i'}^{(j)}\right]
	+\mathrm{sum}\left[-\frac{a^{(j)}+1}{2}\beta_i^{(j)}, \frac{a^{(j)}+1}{2}\beta_i^{(j)} \right].
	\end{align}
	Note that the right hand side is half of the convex hull of weights in 
	$\wedge^{\bullet}(U_y)$. 
	By~\cite[Lemma~2.9]{hls},
	for a weight $\chi$ satisfying the above condition (\ref{equiv:cond}),
	any weight $\chi'$ in $\Gamma_{G_y}(\chi)$ satisfies 
	\begin{align*}
	    \langle \lambda, \chi' \rangle \in 
	    \left[ -\frac{1}{2}\eta_{\lambda} -\big\langle \lambda, \frac{1}{2}\delta_0+\frac{1}{2}\chi_0-\varepsilon \chi_0 \big\rangle,  \frac{1}{2}\eta_{\lambda}-\big\langle \lambda, \frac{1}{2}\delta_0+\frac{1}{2}\chi_0-\varepsilon \chi_0 \big\rangle \right]
	\end{align*}
	for any cocharacter $\lambda \colon \mathbb{C}^{\ast} \to T_y$. 
	Therefore, by (\ref{lambda:L}), 
the vector bundle $\Gamma_{G_y}(\chi) \otimes \mathcal{O}_{\mathcal{X}_y}$ for a weight $\chi$ satisfying (\ref{wt:cond1}) 
satisfies the condition (\ref{wt:0}), i.e. 
$\mathbb{V}_{I, y}' \subset \mathbb{V}_{I, y}$ holds. 

It remains to show that $\mathbb{V}_{I, y}' \subset \mathbb{V}_{I, y}$
is essentially surjective. 
From the construction of $\mathbb{V}_{I, y}$, the 
``window theorem"~\cite[Theorem 2.10]{halp} (see Subsection \ref{subsection:window}) shows that the composition 
\begin{align*}
	\mathbb{V}_{I, y} \hookrightarrow 
	\text{MF}^{\text{gr}}(\mathcal{X}_y, f_y)
	\twoheadrightarrow \text{MF}^{\text{gr}}(\mathcal{X}_y^{\chi_0\mathchar`-\rm{ss}}, f_y)
	\end{align*} 
is an equivalence. 
On the other hand, by the ``magic window theorem"~\cite{hls},
the category $\text{MF}^{\text{gr}}\left(U^{\chi_0\mathchar`-\rm{ss}}_y/G_y, f_y\right)$
is generated by matrix factorizations whose factors are generated by $\Gamma_{G_y}(\chi) \otimes \mathcal{O}_{\mathcal{X}_y}$
for a weight $\chi$ satisfying (\ref{equiv:cond}).
Consider an embedding $\iota\colon \mathcal{X}_y \hookrightarrow U_y/G_y$
given by linear inclusions $(V^{(j)\vee})^{\oplus a^{(j)}} \hookrightarrow (V^{(j)\vee})^{\oplus a^{(j)}+1}$
into the first $a^{(j)}$-th factors
for each $j$. 
Then $\mathcal{X}_y^{\chi_0\mathchar`-\rm{ss}}$
is the pull-back of
$U^{\chi_0\mathchar`-\rm{ss}}_y/G_y\subset U_y/G_y$
under $\iota$, see~\cite[Remark~3.1]{PT3} or \cite[Lemma~7.10]{T}.  
In particular, $\mathrm{MF}^{\rm{gr}}(\mathcal{X}_y^{\chi_0\text{-ss}}, f_y)$
is generated by restrictions from 
$\mathrm{MF}^{\rm{gr}}(U_y^{\chi_0\text{-ss}}/G_y, f_y)$. 
So 
the composition 
\begin{align*}
	\mathbb{V}_{I, y}' \hookrightarrow 
	\text{MF}^{\text{gr}}(\mathcal{X}_y, f_y)
	\twoheadrightarrow \text{MF}^{\text{gr}}(\mathcal{X}_y^{\chi_0\mathchar`-\rm{ss}}, f_y)
	\end{align*}
is essentially surjective, hence $\mathbb{V}_{I, y}' \subset \mathbb{V}_{I, y}$ is essentially 
surjective. 
	\end{proof}

If we write $\chi=\sum_{j=1}^m\chi^{(j)}$
as in (\ref{decom:chij}), 
the conditions (\ref{wt:cond1}) are equivalent to 
	\begin{align}
\notag	&\chi^{(j)}+\rho^{(j)}+\delta^{(j)} \in \frac{3}{2}\mathrm{sum}\left[0, \beta_i^{(j)}-\beta_{i'}^{(j)}\right]
	+\mathrm{sum}\left[ -\frac{a^{(j)}}{2}\beta_i^{(j)}, \frac{a^{(j)}}{2}\beta_i^{(j)} \right]
	+\mathrm{sum}\left[-\beta_i^{(j)}, 0\right], \\
	\notag	&	\chi^{(j)}+\rho^{(j)}+\delta^{(j)} \in \frac{3}{2}\mathrm{sum}\left[0, \beta_i^{(j)}-\beta_{i'}^{(j)}\right]
	+\mathrm{sum}\left[ -\frac{a^{(j)}}{2}\beta_i^{(j)}, \frac{a^{(j)}}{2}\beta_i^{(j)} \right]
\end{align}
for each $1\leq j\leq m$ 
respectively.
Let $y^{(j)} \in T_{S}(\beta, \chi(\mathcal{O}_C)+d^{(j)})$ be the point 
corresponding to 
\begin{align}\label{y(j)}
	(\mathcal{O}_S \twoheadrightarrow \mathcal{O}_C) \oplus (V^{(j)} \otimes (0 \to \mathcal{O}_{x^{(j)}})). 
	\end{align}
Then $\mathcal{X}_y$ is written as
\begin{align*}
	\mathcal{X}_{y}=\prod_{j=1}^m \mathcal{X}_{y^{(j)}}
	\end{align*}
where the product is the fiber product 
over $H^0(\mathcal{O}_C(C)) \oplus H^1(\mathcal{O}_C(C))^{\vee}$.
Under the above product, by Lemma~\ref{lem:WIP}
we have decompositions of window categories
\begin{align}\label{prop:IP}
	\mathbb{V}_{I, y}=\boxtimes_{j=1}^m \mathbb{V}_{I, y^{(j)}}, \ 
	\mathbb{V}_{P, y}=\boxtimes_{j=1}^m \mathbb{V}_{P, y^{(j)}}. 
	\end{align}

\subsection{Semiorthogonal decomposition of DT categories}\label{sub:SODcatDT}
Recall the categorical Hall product \eqref{PS} on $D^b(\mathfrak{M}_{S}(d))$. 
There is a left action of the categorical Hall algebra on $D^b(\mathfrak{T}_S(\beta, n))$
via the stack of short exact sequences of framed sheaves, 
see~\cite[Section~7.2]{T2}. 
For $\widetilde{d}+n'=n$, 
let $\mathfrak{T}_S(\widetilde{d}, (\beta, n'))$ be the derived 
stack of objects $(\mathcal{O}_S \to F)$
in $\mathfrak{T}_S(\beta, n)$
together with $Q \subset F$
such that $Q$ is zero-dimensional with length $\overline{d}$. 
We have evaluation morphisms 
\begin{align*}
    \mathfrak{M}_S(\widetilde{d}) \times \mathfrak{T}_S(\beta, n')
    \stackrel{q}{\leftarrow} \mathfrak{T}_S(\widetilde{d}, (\beta, n'))
    \stackrel{p}{\to} \mathfrak{T}_S(\beta, n),
\end{align*}
where $q$ sends $(\mathcal{O}_S \to F)$ together with $Q\subset F$
to $(Q, (\mathcal{O}_S \to F/Q))$. 
The morphism $q$ is quasi-smooth and $p$ is proper, see the 
argument of~\cite[Section~7.1]{T2}. 	
The above mentioned left action is 
	given by the functor 
\begin{align}\label{actionMNOP}
	\ast=p_{\ast}q^{\ast} \colon 
	D^b(\mathfrak{M}_{S}(\widetilde{d})) \otimes D^b(\mathfrak{T}_S(\beta, n')) \to 
	D^b(\mathfrak{T}_{S}(\beta, n)). 
	\end{align}
Its composition gives a functor
\begin{align}\label{catH}
	\ast \colon 
		D^b(\mathfrak{M}_{S}(d_1))\otimes \cdots \otimes
		D^b(\mathfrak{M}_{S}(d_k)) \otimes D^b(\mathfrak{T}_{S}(\beta, n'))
		\to D^b(\mathfrak{T}_S(\beta, n)) 
		\end{align}
		where $d_1+\cdots+d_k+n'=n$. 
The main result of this paper,
Theorem \ref{thm:main}, follows from the following result via the window theorem: 
\begin{thm}\label{thm:windowdec}
	The categorical Hall product (\ref{catH}) induces
	a fully-faithful functor 
	\begin{align}\label{catH2}
		\ast \colon \mathbb{T}_X(d_1)_{v_1} \otimes \cdots \otimes 
		\mathbb{T}_X(d_k)_{v_k} \otimes \mathbb{W}_P \to 
		\mathbb{W}_I
		\end{align}
	for $-1<v_1/d_1<\cdots<v_k/d_k \leq 0$ such that there is a semiorthogonal 
	decomposition 
	\begin{align*}
		\mathbb{W}_I=
		\left\langle  \mathbb{T}_X(d_1)_{v_1} \otimes \cdots \otimes 
		\mathbb{T}_X(d_k)_{v_k} \otimes \mathbb{W}_P \,\Big| 
		-1<\frac{v_1}{d_1}<\cdots<\frac{v_k}{d_k} \leq 0  \right\rangle. 
		\end{align*}
		The order of the semiorthogonal summands is 
		that of $(d_i, v_i)_{i=1}^k \in R$
		given in 
		Definition~\ref{def:order}, see Subsection \ref{notation}. 
	\end{thm}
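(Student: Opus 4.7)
\begin{pf} (Proposal.) The plan is to reduce the global semiorthogonal decomposition to a formal local one at each closed point $y \in T_S(\beta,n)$ using Proposition~\ref{prop:local}, and then apply the local categorical DT/PT correspondence of Theorem~\ref{thm:locDTPT} factor-by-factor in the product decomposition~(\ref{prop:IP}).

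First, by Proposition~\ref{prop:local}, an object of $D^b(\mathfrak{T}_S(\beta,n))$ lies in $\mathbb{W}_I$ (resp.\ $\mathbb{W}_P$) if and only if its restriction to each formal fiber $\widehat{\mathfrak{T}}_S(\beta,n)_y$ lies in $\mathbb{W}_{I,y}$ (resp.\ $\mathbb{W}_{P,y}$). Via the Koszul duality~(\ref{Kos:y}), this transports the problem into categories of graded matrix factorizations on $\mathcal{X}_y$, whose generators are described by the weight conditions of Lemma~\ref{lem:WIP}. The product decomposition $\mathcal{X}_y = \prod_{j=1}^m \mathcal{X}_{y^{(j)}}$ (fibered over $H^0(\mathcal{O}_C(C)) \oplus H^1(\mathcal{O}_C(C))^\vee$) and the induced decompositions~(\ref{prop:IP}) show that it suffices to work with each factor $y^{(j)}$ separately.

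At a single factor $y^{(j)}$ corresponding to~(\ref{y(j)}), the stack $\mathcal{X}_{y^{(j)}}$ is (a completion of) the representation stack $\mathcal{X}^{af,N}(1, d^{(j)})$ of a DT/PT quiver with $a^{(j)}$ extra arrows, where the extra $\mathbb{C}^N$ factor accounts for the $H^\bullet(\mathcal{O}_C(C))$ contributions. Matching Lemma~\ref{lem:WIP} against the definitions of $\mathbb{E}^a(1,d;\delta)$ and $\mathbb{F}^a(1,d;\delta)$ in Subsection~3.3, with the choice $\mu = -a^{(j)}/2 - \varepsilon$ for $0 < \varepsilon \ll 1$, one identifies $\mathbb{V}_{I,y^{(j)}} \simeq \mathbb{E}^{a^{(j)}}(1, d^{(j)}; \delta^{(j)})$ and $\mathbb{V}_{P,y^{(j)}} \simeq \mathbb{F}^{a^{(j)}}(1, d^{(j)}; \delta^{(j)})$. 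Applying Theorem~\ref{thm:locDTPT} then yields a semiorthogonal decomposition of $\mathbb{V}_{I,y^{(j)}}$ whose summands are external products of subcategories $\mathbb{S}^{\mathrm{gr}}(d_i)_{w_i}$ boxed with $\mathbb{V}_{P,y^{(j)}_{\leq d'}}$. The weight transformation~(\ref{vwtransform}) converts the bounds~(\ref{equationboundmu}) with $\mu = -a^{(j)}/2 - \varepsilon$ into the integer condition $-1 < v_1/d_1 < \cdots < v_k/d_k \leq 0$, and under the Koszul equivalence~(\ref{equiv:ST}) the summands become $\mathbb{T}_X(d_i)_{v_i}$.

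Combining the decompositions over all $j$ via the product~(\ref{prop:IP}) gives a formal local semiorthogonal decomposition of $\mathbb{W}_{I,y}$ into products of quasi-BPS summands boxed with $\mathbb{W}_{P,y}$, with the indexing over all $(d_i,v_i)$ satisfying $-1 < v_1/d_1 < \cdots < v_k/d_k \leq 0$ and $\sum d_i + n' = n$. The local fully-faithful embeddings are induced by the quiver Hall products~(\ref{cat:Hall:M}); these need to be identified with the restriction of the global Hall product~(\ref{catH}) to each formal fiber, which follows from the compatibility of the derived stack of filtrations $\mathfrak{T}_S(\widetilde d, (\beta,n'))$ with the quiver correspondence under Koszul duality (using that, formally locally, the stack of filtrations decomposes as a product whose factors are the attracting stacks for the cocharacter $\lambda$ appearing in~(\ref{dia:XfN})).

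The hard part is globalizing from the formal local statements to a genuine semiorthogonal decomposition of $\mathbb{W}_I$. For full-faithfulness and semiorthogonality, one checks vanishing of $\mathrm{Hom}$'s after restriction to every formal fiber, which follows from the corresponding vanishing in Theorem~\ref{thm:locDTPT}. The generation statement requires an adjoint argument: the fully-faithful functors~(\ref{catH2}) admit right adjoints (built from Lemma~\ref{lemma:adjoint} applied to the window equivalence for $\mathbb{W}_P$, together with the formal local adjoints provided by Theorem~\ref{thm:locDTPT}) that glue over the good moduli space $T_S(\beta,n)$ by the argument of the proof of Lemma~\ref{lemma:adjoint}, giving a global decomposition whose order is determined by Definition~\ref{def:order}. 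Theorem~\ref{thm:main} then follows by composing with the window equivalence $\mathbb{W}_I \xrightarrow{\sim} \mathcal{DT}_X(\beta,n)$ of Theorem~\ref{thmtodawindow}.
\end{pf}
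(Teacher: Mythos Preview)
Your overall approach matches the paper's: reduce to formal fibers via Proposition~\ref{prop:local}, pass to graded matrix factorizations by Koszul duality, apply Theorem~\ref{thm:locDTPT} with $\mu=-a^{(j)}/2-\varepsilon$ to each factor $y^{(j)}$, and then globalize using right adjoints. That part is fine.

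There is, however, a genuine gap in how you handle the combinatorics at a formal fiber. When you restrict the global Hall product~(\ref{catH}) to $\widehat{\mathfrak{T}}_S(\beta,n)_y$, you do \emph{not} get a single functor for each tuple $(d_i,v_i)_{i=1}^k$; you get a direct sum over all decompositions $d_\bullet=d_\bullet^{(1)}+\cdots+d_\bullet^{(m)}$ recording how each $d_i$ is distributed among the support points $x^{(1)},\ldots,x^{(m)}$ (this is the morphism~(\ref{dsummap}) and the functor~(\ref{ast:NW}) in the paper). Correspondingly, when you take the box product over $j$ of the semiorthogonal decompositions coming from Theorem~\ref{thm:locDTPT}, the summands are indexed by independent tuples $(d_i^{(j)},v_i^{(j)})$ for each $j$, not by a single $(d_i,v_i)$. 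Two further points then require proof: (i) summands with the same $(d_i,v_i)$ but different decompositions $d_\bullet^{(\ast)}$ are mutually orthogonal, so they may be grouped together into the single direct-sum piece appearing in~(\ref{ast:NW}); and (ii) the product partial order obtained from the $m$ factors actually refines the order of Definition~\ref{def:order} on the collapsed index set $R$. Neither is automatic. The paper handles this in a separate Step~3, using the semiorthogonal decomposition~(\ref{eqSODsurface}) of $D^b(\mathfrak{M}_S(\widetilde d))$ restricted to the formal fiber for the case $\sum_i(d_i,v_i)=\sum_i(d_i',v_i')$, and a direct weight comparison for the other cases. Your sketch omits this entirely and asserts the SOD ``with the indexing over all $(d_i,v_i)$'' as if it fell out of the box product; it does not.

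A second, smaller omission: the compatibility of the two Hall products under Koszul duality is not on the nose but holds only up to a determinant twist~(\ref{line:IF}); this twist is precisely what produces the relation~(\ref{wij}) between $w_i^{(j)}$ and $v_i^{(j)}$ and the shift of $\mathbb{V}_{P,y'^{(j)}}$ by $\chi_0^{\sum_i d_i^{(j)}}$. You invoke~(\ref{vwtransform}) but do not say where it comes from on the geometric side; the paper computes the twist explicitly from~(\ref{HomIF}) and the decomposition~(\ref{decom:Vj}). Finally, your source for the right adjoints is slightly off: they are not built from ``formal local adjoints provided by Theorem~\ref{thm:locDTPT}'' but from the global right adjoint of the Hall action~(\ref{actionMNOP}) (restricted to a fixed weight) combined with Lemma~\ref{lemma:adjoint} and Theorem~\ref{SODsurface}, and then checked to be base-change compatible.
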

	
\begin{step}
It suffices to prove the statement formally locally on $T_S(\beta, n)$.\end{step}
	\begin{proof}
Reducing the proof of global semiorthogonal decompositions to the proof of formal local semiorthogonal decompositions is standard (in the presence of right adjoints), see for example \cite[Lemma 6.3.12]{T}, \cite[Theorem 4.5]{TodDK}, \cite[Theorem 5.16]{T3}. 

We first note that 
the functor (\ref{catH}) is given by the correspondence 
\begin{align*}
    \prod_{i=1}^k \mathfrak{M}_S(d_i) \times 
    \mathfrak{T}_S(\beta, n') \leftarrow \mathfrak{T}_S(d_{\bullet}, (\beta, n')) \to 
    \mathfrak{T}_S(\beta, n)
\end{align*}
where the middle term is the derived stack of 
objects $(\mathcal{O}_S \to F)$ in $\mathfrak{T}_S(\beta, n)$
together with a filtration 
$0=Q_0 \subset Q_1 \subset \cdots \subset Q_k \subset F$
such that each $Q_i/Q_{i-1}$ is zero-dimensional of length $d_i$. 
Consider a closed point $y \in T_S(\beta, n)$ and recall that it corresponds to a complex \eqref{rep:dsum}:
\[I=(\mathcal{O}_S \to F)=(\mathcal{O}_S \twoheadrightarrow \mathcal{O}_C) \oplus \bigoplus_{j=1}^m 
	V^{(j)} \otimes (0 \to \mathcal{O}_{x^{(j)}}).\]
There are Cartesian squares, see~\cite[Lemma~6.4]{Totheta}
\begin{align*}
    \xymatrix{
    \coprod_{(p_1, \ldots, p_k, y')}\left(\prod_{i=1}^k \widehat{\mathfrak{M}}_S(d_i)_{p_i}\right)
    \times \widehat{\mathfrak{T}}_S(\beta, n')_{y'} \ar[d]\ar@{}[rd]|\square & 
    \widehat{\mathfrak{T}}_S(d_{\bullet}, (\beta, n'))_{y} \ar[r] \ar[d] \ar[l]\ar@{}[rd]|\square & 
    \widehat{\mathfrak{T}}_S(\beta, n)_{y} \ar[d] \\
     \prod_{i=1}^k \mathfrak{M}_S(d_i) \times 
    \mathfrak{T}_S(\beta, n') & \ar[l] \ar[r] \mathfrak{T}_S(d_{\bullet}, (\beta, n')) & 
    \mathfrak{T}_S(\beta, n)
    }
\end{align*}
In the above, the union after $(p_1, \ldots, p_k, y')$ consists of the union after points 
in the fiber of the following morphism at $y$:
\begin{align}\label{dsummap}
		\oplus \colon 
		\mathrm{Sym}^{d_1}(S) \times \cdots \times 
		\mathrm{Sym}^{d_k}(S) \times 
		T_{S}(\beta, n') \to T_S(\beta, n). 
		\end{align}
Note that each point $p_i$ is written as $p_i=\sum_{j=1}^m d_i^{(j)}x^{(j)}$
for a decomposition $d_{\bullet}=d_{\bullet}^{(1)}+\cdots+d_{\bullet}^{(m)}$
and $y'$ is uniquely determined by the above decomposition 
together with the condition $p_1 \oplus \cdots \oplus p_k \oplus y'=y$. 
 	Therefore the functor (\ref{catH}) pulls back to the functor 
	\begin{align}\label{func:formal}
		\bigoplus_{\begin{subarray}{c}
				d_{\bullet}=d_{\bullet}^{(1)}+\cdots +d_{\bullet}^{(m)} \\
				p_i=\sum_{j} d_i^{(j)}x^{(j)}
		\end{subarray}}D^b(\widehat{\mathfrak{M}}_{S}(d_1)_{p_1}) \otimes \cdots \otimes 
	D^b(\widehat{\mathfrak{M}}_{S}(d_k)_{p_k}) &\otimes D^b(\widehat{\mathfrak{T}}_{S}(\beta, n')_{y'}) \\
	&\notag \to D^b(\widehat{\mathfrak{T}}_S(\beta, n)_{y})
		\end{align}
which 
commutes with (\ref{catH}) under 
		restriction functors to formal fibers. 

		In the next steps, we show that the
		functor (\ref{func:formal}) restricts to the 
fully-faithful functor 	
	\begin{align}\label{ast:NW}
		\ast
		\colon \bigoplus_{\begin{subarray}{c}
				d_{\bullet}=d_{\bullet}^{(1)}+\cdots +d_{\bullet}^{(m)} \\
				p_i=\sum_{j} d_i^{(j)}x^{(j)}
			\end{subarray}}
		\mathbb{T}_X(d_1)_{v_1, p_1}\otimes \cdots \otimes 
		\mathbb{T}_X(d_k)_{v_k, p_k} \otimes \mathbb{W}_{P, y'} 
		\to \mathbb{W}_{I, y}
		\end{align}
		and that the images of these functors form a semiorthogonal decomposition for 
	$-1<v_1/d_1<\cdots<v_k/d_k \leq 0$, with the order of $R$ 
	from Definition~\ref{def:order}. 
	
	We now briefly explain that the
	above formal local statement proves
	the theorem, following the latter half of the 
		proof of~\cite[Theorem~5.16]{T3}. 
	First, if the above statement is true, then by Proposition~\ref{prop:local}
the functor (\ref{catH}) restricts to the functor (\ref{catH2}). 
Let $R' \subset R$ be the set of tuplets $(d_i, v_i)_{i=1}^k \in R$
which appear in the statement of the theorem. 
Note that $R'$ is a finite set. For $A=(d_i, v_i)_{i=1}^k\in R'$,
we denote the functor \eqref{catH2} by $\Phi_A$.
We next explain
that $\Phi_A$ has a right adjoint $\Psi_A$.
For a fixed $v$, the functor
(\ref{actionMNOP}) restricted to 
$D^b(\mathfrak{M}_{S}(\widetilde{d}))_v \otimes 
D^b(\mathfrak{T}_S(\beta, n'))$
admits a right adjoint, see the 
proof of~\cite[Proposition~6.7]{Totheta}.
Therefore,
by Lemma~\ref{lemma:adjoint} and
Theorem~\ref{SODsurface}, 
the functor $\Phi_A$
		has a right adjoint $\Psi_A$
		compatible with base change over 
		the good moduli space $T_S(\beta, n)$. 
	
	To show that $\Phi_A$ is fully faithful, let $E$ be an object in the source of $\Phi_A$ and let $C$ be the cone of the natural map 
		\[E\to \Psi_A\Phi_A(E)\to C\to E[1].\] The cone $C$ is trivial formally locally on $T_S(\beta, n)$
		by the assumption that (\ref{ast:NW}) is fully-faithful. 
		Thus it is trivial by \cite[Lemma 6.5]{T3} and so $\Phi_A$ is fully faithful. The argument for orthogonality is similar. 
		
		To show generation, let $E$ be in $\mathbb{W}_I$. Let $\mathscr{C}$ be the subcategory of $\mathbb{W}_I$ generated by the images of the functors $\Phi_A$ for $A\in R'$ and let $\mathscr{D}$ be the right complement of $\mathscr{C}$ in $\mathbb{W}_I$.
		Let $B\in R'$ be maximal and let 
		\[\Phi_B\Psi_B(E)\to E\to E'\to \Phi_B\Psi_B(E)[1].\] We repeat this procedure for a maximal $B'\in R'\setminus \{B\}$ and so on. 
		In the end, we obtain a distinguished triangle 
		\[\overline{E}\to E\to \widetilde{E}\to \overline{E}[1]\] with $\overline{E}\in \mathscr{C}$ and $\widetilde{E}\in\mathscr{D}$. By the formal local assumption, the object $\widetilde{E}$ is trivial formally locally on $T_S(\beta, n)$, thus it is zero by \cite[Lemma 6.5]{T3}, and this implies that $\mathscr{C}=\mathbb{W}_I$.
		\end{proof}
\begin{step}
    Let $y\in T_S(\beta, n)$. 
    We show the functors \eqref{ast:NW} are fully faithful and that their images form a semiorthogonal decomposition for 
	\[-1<\frac{v_1}{d_1}<\cdots<\frac{v_k}{d_k}
	\leq 0.\]
\end{step}
\begin{proof}
Recall that $y$ corresponds to a complex \eqref{rep:dsum}:
\[I=(\mathcal{O}_S \to F)=(\mathcal{O}_S \twoheadrightarrow \mathcal{O}_C) \oplus \bigoplus_{j=1}^m 
	V^{(j)} \otimes (0 \to \mathcal{O}_{x^{(j)}}).\]
	Let us fix $(d_1, \ldots, d_k)$ and decompositions 
	$d_{\bullet}=d_{\bullet}^{(1)}+ \cdots + d_{\bullet}^{(m)}$
	such that $\dim V^{(j)} \geq d_1^{(j)}+\cdots+d_k^{(j)}$ for all $j$. 
	Let 
	\begin{align}\label{decom:Vj}
		V^{(j)}=V_1^{(j)} \oplus \cdots \oplus V_k^{(j)} \oplus V_{k+1}^{(j)}
		\end{align}
	be decompositions such that $\dim V_{i}^{(j)}=d_i^{(j)}$. Let $d^{(j)}_{k+1}:=\dim V^{(j)}_{k+1}$.
	Consider the cocharacter $\lambda \colon 
	\mathbb{C}^{\ast} \to G_y$ which
	acts on $V_i^{(j)}$ with weight $k+1-i$. 
	Then the functor (\ref{func:formal}) on the component
	corresponding to $d_{\bullet}=d_{\bullet}^{(1)}+\cdots+d_{\bullet}^{(m)}$
	is induced by the diagram of attracting loci with respect to $\lambda$. 
	From the compatibility of Hall products and Koszul duality, 
	we have the following commutative diagram (see~\cite[Proposition~3.1]{P2})
	\begin{align}\label{com:MF}
		\xymatrix{
\bigotimes_{i=1}^k D^b(\widehat{\mathfrak{M}}_{S}(d_i)_{p_i}) \otimes 
D^b(\widehat{\mathfrak{T}}_{S}(\beta, n')_{y'}) \ar[r]^-{\ast} \ar[d] & 	D^b(\widehat{\mathfrak{T}}_{S}(\beta, n)_{y}) \ar[d] \\
\bigotimes_{i=1}^k \text{MF}^{\text{gr}}(\mathcal{X}(d_i)_{p_i}, w_{p_i})
\otimes \text{MF}^{\text{gr}}(\mathcal{X}_{y'}, w_{y'})
\ar[r]^-{\ast} & 	\text{MF}^{\text{gr}}(\mathcal{X}_{y}, w_{y}). 	
}		
		\end{align}
	In the diagram above, the upper horizontal map is \eqref{catH}, the lower horizontal map is induced as in \eqref{Hallproductquiver}, \eqref{cat:Hall:M},
	the right vertical arrow is a Koszul duality equivalence (\ref{Kos:y}), and the 
	left vertical arrow is the composition of Koszul duality equivalences (\ref{Kos:y}), (\ref{Kos:p})
	with the tensor product of the line bundle 
	\begin{align}\label{line:IF}
		\det(\Hom^1_S(I, F)^{\lambda >0})^{\vee}
		\in \mathrm{Pic}(BG_y^{\lambda}). 
		\end{align}
		Note that we have 
		\begin{align*}
		    G_y^{\lambda}=\prod_{j=1}^m \prod_{i=1}^{k+1} \GL(V_i^{(j)}). 
		\end{align*}
	By (\ref{HomIF}) and (\ref{decom:Vj}),
	we have 
	\begin{align*}
	    \left(\Hom^1(I, F)^{\lambda>0}\right)^{\vee}=
	    \bigoplus_{j=1}^m \bigoplus_{i>i'}\Hom(V_{i'}^{(j)}, V_i^{(j)}). 
	\end{align*}
	Then 
	the $G_y^{\lambda}$-character (\ref{line:IF}) is calculated as 
	\begin{align*}
	\bigotimes_{j=1}^m 
			\bigotimes_{i=1}^k \det(V_i^{(j)})^{-\sum_{i<i'\leq k}d_{i'}^{(j)}+\sum_{i>i'}d_{i'}^{(j)}-d_{k+1}}
			\otimes \det (V_{k+1}^{(j)})^{\sum_{1\leq i\leq k}d_i^{(j)}}. 
		\end{align*}
	Therefore the left vertical arrow in (\ref{com:MF}) induces the equivalence 
	\begin{align}\notag
		\bigotimes_{i=1}^k 	\mathbb{T}_X(d_i)_{v_i, p_i}
		\otimes \mathbb{W}_{P, y'} &=\boxtimes_{j=1}^m 
		\bigotimes_{i=1}^k \mathbb{T}_X(d_i^{(j)})_{v_i^{(j)},p_i^{(j)}}
		\otimes \mathbb{W}_{P, y^{'(j)}} \\
		&\stackrel{\sim}{\to} \label{bproduct}
		\boxtimes_{j=1}^m 
		\bigotimes_{i=1}^k \mathbb{S}^{\rm{gr}}(d_i^{(j)})_{w_i^{(j)}, p_i^{(j)}}
		\otimes (\mathbb{V}_{P, y^{'(j)}}\otimes \chi_0^{\sum_{i=1}^k d_i^{(j)}}). 
		\end{align}
		Here, the point $y^{'(j)}$ is defined as in (\ref{y(j)}) for $y'$, 
		$v_i^{(j)}$ is determined by 
		$v_i/d_i=v_i^{(j)}/d_i^{(j)}$, $w_{i}^{(j)}$ is given by 
	\begin{align}\label{wij}
		w_i^{(j)}:=
		v_i^{(j)}+d_i^{(j)}\left(-\sum_{i<i'\leq k}d_{i'}^{(j)}+\sum_{i>i'}d_{i'}^{(j)}-d_{k+1}^{(j)}\right),
		\end{align}
		and $\chi_0$ is the determinant character of $V^{j}_{k+1}$. Observe that the relation \eqref{wij} is the same as the relation \eqref{vwtransform}. 
		
	Then by setting $\mu=-(a^{(j)}/2+\varepsilon)$ 
and $0<\varepsilon \ll 1$ for each $j$ in Theorem~\ref{thm:locDTPT}, 
we obtain the semiorthogonal decomposition 
	\begin{align*}
	\mathbb{V}_{I, y^{(j)}}=\left\langle 	\bigotimes_{i=1}^k \mathbb{S}^{\rm{gr}}(d_i^{(j)})_{w_i^{(j)},p_i^{(j)}}
	\otimes (\mathbb{V}_{P, y^{'(j)}}\otimes \chi_0^{\sum_{i=1}^k d_i^{(j)}})  \,\Big|
	-1<\frac{v_1^{(j)}}{d_1^{(j)}}<\cdots<\frac{v_k^{(j)}}{d_k^{(j)}} \leq 0\right\rangle. 
	\end{align*}
	The twist by $\chi_0^{\sum_{i=1}^k d_i^{(j)}}$ of $\mathbb{V}_{P, y^{'(j)}}$ is due to the definition of $\delta'$ from Theorem~\ref{thm:locDTPT}.
By taking the box-product over $1\leq j\leq m$ and using the diagram (\ref{com:MF}), 
we obtain the semiorthogonal decomposition 
\begin{align}\label{SOD:WI}
	\mathbb{W}_{I, y}=\left\langle 	\bigotimes_{i=1}^k \mathbb{T}_X(d_i)_{v_i, p_i}
\otimes \mathbb{W}_{P, y'}\,\Big|
-1<\frac{v_1}{d_1}<\cdots<\frac{v_k}{d_k} \leq 0, d_{\bullet}=d_{\bullet}^{(1)}+\cdots+d_{\bullet}^{(m)}\right\rangle. 		\end{align}
	
	In the next step, we show that the 
	order of the above semiorthogonal decomposition 
	coincides with the order of $R$ induced by $O\subset R\times R$. 
		Moreover, we 
	show that
	for a fixed $d_{\bullet}$, 
the two summands for the decompositions 
\begin{align}\label{decom:dbu}
d_{\bullet}=d_{\bullet}^{(1)}+\cdots+d_{\bullet}^{(m)}
=d_{\bullet}^{'(1)}+\cdots+d_{\bullet}^{'(m)}
\end{align}
are mutually 
orthogonal. 
Therefore we conclude that 
the functors (\ref{ast:NW}) are fully-faithful and 
their essential images form a semiorthogonal decomposition.
\end{proof}
\begin{step}
The order of the semiorthogonal 
decomposition (\ref{SOD:WI})
is compatible with the order of the set $R$. 
Moreover, 
the two summands in (\ref{SOD:WI})
for the decompositions (\ref{decom:dbu}) are
mutually orthogonal. 
\end{step}
\begin{proof}
Suppose that $A=(d_i, v_i)_{i=1}^k$
and $A'=(d_i', v_i')_{i=1}^{k'}$
are elements of $R$. 
First assume that 
 either 
$\sum_{i=1}^k v_i>\sum_{i=1}^{k'} v_i'$, 
or $\sum_{i=1}^k v_i=\sum_{i=1}^{k'}v_i'$
and $\sum_{i=1}^k d_i<\sum_{i=1}^{k'}d_i'$.
Let us take decompositions 
\begin{align*}
	(d_{\bullet}, v_{\bullet})
	&=(d_{\bullet}^{(1)}, v_{\bullet}^{(1)})+\cdots+(d_{\bullet}^{(m)}, v_{\bullet}^{(m)}), \\
		(d_{\bullet}', v_{\bullet}')
	&=(d_{\bullet}^{'(1)}, v_{\bullet}^{'(1)})+\cdots +
	(d_{\bullet}^{'(m)}, v_{\bullet}^{'(m)})
	\end{align*}
such that $v_i/d_i=v_i^{(j)}/d_i^{(j)}$ and $v_i'/d_i'=v_i^{'(j)}/d_i^{'(j)}$.
We set 
\begin{align*}
	(d^{(j)}, v^{(j)})&=(d_1^{(j)}, v_1^{(j)})+\cdots+(d_k^{(j)}, v_k^{(j)}), \\
	(d^{'(j)}, v^{'(j)})&=(d_1^{'(j)}, v_1^{'(j)})+\cdots+(d_k^{'(j)}, v_k^{'(j)}). 
	\end{align*}
Then $\sum_{j=1}^m (d^{(j)}, v^{(j)})=\sum_{i=1}^k (d_i, v_i)$, and 
similarly for $A'$. 
Therefore 
there exists $1\leq j\leq m$
such that $v^{(j)}>v^{'(j)}$ 
or $v^{(j)}=v^{'(j)}$ and 
$d^{(j)}<d^{'(j)}$. 
From the order of the semiorthogonal decomposition in Theorem~\ref{thm:locDTPT}
and the box-product description (\ref{bproduct}), we obtain the semiorthogonality of the corresponding 
semiorthogonal summands in (\ref{SOD:WI}).

Next assume that 
$\sum_{i=1}^k (d_i, v_i)=\sum_{i=1}^{k'}(d_i', v_i')$. 
Then we have 
$\sum_{j=1}^m (d^{(j)}, v^{(j)})=\sum_{j=1}^m(d^{'(j)}, v^{'(j)})$. 
If $(d^{(j)}, v^{(j)})_{j=1}^m \neq 
(d^{'(j)}, v^{'(j)})_{j=1}^m$, then there exist 
$j_1, j_2$ such that 
$v^{(j_1)}>v^{'(j_1)}$, $v^{(j_2)}<v^{'(j_2)}$, or 
$v^{(j)}=v^{'(j)}$ for all $j$ and 
$d^{(j_1)}>d^{'(j_1)}$, $d^{(j_2)}<d^{'(j_2)}$. 
In either case, we have the mutual orthogonality of the 
corresponding summands in (\ref{SOD:WI})
from the order of the semiorthogonal decomposition in Theorem~\ref{thm:locDTPT}
and the box-product description (\ref{bproduct}). 

We finally assume that $(d^{(j)}, v^{(j)})_{j=1}^m = 
(d^{'(j)}, v^{'(j)})_{j=1}^m$. 
Let $\widetilde{d}=d_1+\cdots+d_k$
and $\widetilde{p}=\sum_{j=1}^m d^{(j)}x^{(j)} \in \mathrm{Sym}^{\widetilde{d}}(S)$. 
The semiorthogonal decomposition \eqref{eqSODsurface}
induces the semiorthogonal decomposition over the formal 
fibers at $\widetilde{p}$: 
\begin{align}\label{eqSODsurfacep}
	D^b(\widehat{\mathfrak{M}}_{S} (\widetilde{d})_{\widetilde{p}})=\left\langle \bigoplus_{d_{\bullet}^{(\ast)}}
				\bigotimes_{i=1}^k 
	\mathbb{T}_X(d_i)_{w_i, p_i} \,\Big|\, 
	\frac{w_1}{d_1}<\cdots<\frac{w_k}{d_k}, d_1+\cdots+d_k=\widetilde{d}\right\rangle. 		\end{align}
In the above, for a fixed decompositions $d_1+\cdots+d_k=\widetilde{d}$, 
the direct sum consists of decompositions 
$d_{\bullet}=d_{\bullet}^{(1)}+\cdots+d_{\bullet}^{(m)}$
satisfying $d^{(\ast)}=d_1^{(\ast)}+\cdots+d_k^{(\ast)}$
and $p_i=\sum_{j=1}^m d_i^{(j)}x^{(j)}$. 
Then the corresponding summands to $A$ and $A'$ in \eqref{SOD:WI} can be also obtained by taking the product with $\mathbb{W}_{P, y'}$ of the corresponding summands in \eqref{eqSODsurfacep}, and thus the order in \eqref{SOD:WI} is compatible with the set $R$ in the third case in Definition~\ref{def:order}. 
The above argument for $A=A'$ also implies that the 
two summands in (\ref{SOD:WI})
for the decompositions (\ref{decom:dbu}) are
mutually orthogonal. 
		\end{proof}



Now we can prove Theorem~\ref{thm:main}: 
\begin{proof}[Proof of Theorem \ref{thm:main}]
The claim follows from Theorems \ref{thmtodawindow} and \ref{thm:windowdec}. 
\end{proof}

\begin{remark}\label{remark:reducedcurve}
It is possible to obtain a version of Theorem \ref{thm:main} for an arbitrary class $\beta\in H_2(S,\mathbb{Z})$ if we restrict to the locus of reduced curves.
The definitions of the DT/PT categories for reduced curves are similar to Definition~\ref{def:DTPTcat}.
We consider the (finite type) open substack $\mathfrak{T}^{\mathrm{red}}_S(\beta, n)\subset \mathfrak{T}_S(\beta, n)$ of pairs $(F,s)$ such that $F$ has reduced support. 
The classical truncation $\mathcal{T}^{\mathrm{red}}_S(\beta, n)$ of 
$\mathfrak{T}^{\mathrm{red}}_S(\beta, n)$ admits a good moduli space $\mathrm{Chow}^{\beta}(S)^{\mathrm{red}} \times \mathrm{Sym}^d(S)$ as in \eqref{mor:TS}, where $\mathrm{Chow}^{\beta}(S)^{\mathrm{red}}$ is the locus of 
reduced divisors in $S$ of class $\beta$.

By the argument in \cite[Lemma 5.5.4]{T}, we have that 
\begin{align*}
\mathcal{T}^{\mathrm{red}}_X(\beta, n):=t_0(\Omega_{\mathfrak{T}^{\mathrm{red}}_S(\beta, n)}[-1])
\end{align*}
is the moduli stack of pairs $(\mathcal{O}_X \stackrel{s}{\to} E)$, 
where $E \in \Coh_{\leq 1}(X)$ has compact support, $s$ 
has at most zero-dimensional cokernel
and the support of $\tau_{\ast}E$ is reduced. 
Then there are open immersions of the DT/PT spaces 
with reduced supports over $S$:
\[ I^{\mathrm{red}}_X(\beta, n) \subset t_0(\Omega_{\mathfrak{T}_S^\mathrm{red}(\beta, n)}[-1])
    \subset P^{\mathrm{red}}_X(\beta, n). \]
Let $\mathcal{Z}_I^{\mathrm{red}}, \mathcal{Z}_P^{\mathrm{red}}$ be the complements of the above open immersions. Define 
\begin{align*}
		&\mathcal{DT}^{\mathrm{red}}_X(\beta, n):=
		D^b(\mathfrak{T}^{\mathrm{red}}_S(\beta, n))/\mathcal{C}_{\mathcal{Z}_I^{\mathrm{red}}}, \\
		&\mathcal{PT}^{\mathrm{red}}_X(\beta, n):=
		D^b(\mathfrak{T}^{\mathrm{red}}_S(\beta, n))/\mathcal{C}_{\mathcal{Z}_P^{\mathrm{red}}}. 
		\end{align*}
		By the same argument used to prove Theorem \ref{thm:main}, we obtain that there is a semiorthogonal decomposition
			\begin{align*}
			\mathcal{DT}^{\mathrm{red}}_X(\beta, n)
			=
			\left\langle  \bigotimes_{i=1}^k\mathbb{T}_X(d_i)_{v_i}\otimes \mathcal{PT}^{\mathrm{red}}_{X}(\beta, n') \,\Big|
			-1<\frac{v_1}{d_1}<\cdots<\frac{v_k}{d_k} \leq 0  \right\rangle,
		\end{align*}
where the right hand side is after all partitions $d_1+\cdots+d_k+n'=n$. 
\end{remark}

Below we give proofs of corollaries of Theorem~\ref{thm:main}:

\begin{proof}[Proof of Corollary \ref{MacMahonsurface}]
The claim follows from Theorem \ref{thm:main} for $\beta=0$.
\end{proof}

\begin{proof}[Proof of Corollary \ref{cor410}]

By taking the Grothendieck groups of the categories in Theorem \ref{thm:main}, we have that
\begin{equation}\label{decompKK1}
K\left(\mathcal{DT}_X(\beta, n)\right)
			=\bigoplus 
			K\left(\bigotimes_{i=1}^k\mathbb{T}_X(d_i)_{v_i}\otimes \mathcal{PT}_{X}(\beta, n')\right),
			\end{equation}
			where the sum on the right hand side is after all partitions $d_1+\cdots+d_k+n'=n$ and all weights
$-1<v_1/d_1<\cdots<v_k/d_k \leq 0 $. 

Fix $n'\leq n$.
Consider the semiorthogonal decomposition obtained by taking the product of the categories in \eqref{SODMacMahonsurface} with $\mathcal{PT}_X(\beta, n')$. By taking the Grothendieck group of the categories appearing in this semiorthogonal decomposition, we obtain that
\begin{equation}\label{decompKK2}
K\left(\mathcal{DT}_{X}(0, n-n')\otimes \mathcal{PT}_{X}(\beta, n')\right)=\bigoplus
K\left(\bigotimes_{i=1}^k\mathbb{T}_X(d_i)_{v_i}\otimes \mathcal{PT}_{X}(\beta, n')\right)
,
\end{equation}
where the sum on right hand side is after all partitions $d_1+\cdots+d_k=n-n'$ and all weights
$-1<v_1/d_1<\cdots<v_k/d_k \leq 0 $. 
The claim follows from \eqref{decompKK1} and \eqref{decompKK2}.
\end{proof}

\begin{remark}\label{sub45:DTPT}
The Künneth isomorphism does not hold for the right hand side of \eqref{eq:cor410} for a general surface $S$. 
To obtain more refined versions of the isomorphism \eqref{eq:cor410} in particular cases, we may proceed as in \cite{PT3} and consider localized equivariant K-theory for DT and PT spaces of a toric surface $S$.
Recall the notation  $\mathbb{K}:=K_0(BT)$, $\mathbb{F}:=\mathrm{Frac}\,\mathbb{K}$, and $V_\mathbb{F}:=V\otimes_\mathbb{K}\mathbb{F}$ for $V$ a $\mathbb{K}$-module.
In this case, there is an isomorphism of 
$\mathbb{F}$-vector spaces
\[K_T(\mathcal{DT}_X(\beta, n))_\mathbb{F}\cong \bigoplus_{n'\geq 0}K_T(\mathcal{DT}_{X}(0, n-n'))_\mathbb{F}\otimes_\mathbb{F} K_T(\mathcal{PT}_{X}(\beta, n'))_\mathbb{F}.\]
The same proof as for \cite[Corollary 4.7]{PT3}
applies here. Observe that the dimensions of $K_T(\mathcal{DT}_{X}(0, n-n'))_\mathbb{F}$ are known from the formula \eqref{dtseriestoric}. 
\end{remark}

\begin{remark}\label{sub:catMM}
We revisit the computation of DT invariants of points from Subsection \ref{sub:KDT}. 
Assume that $S$ is a toric surface.
By Theorem \ref{SODsurface}, Proposition \ref{prop38}, and Corollary \ref{cor39}, 
there is a Künneth isomorphism
\[K_T\left(\bigotimes_{i=1}^k\mathbb{T}_X(d_i)_{v_i}\right)_{\mathbb{F}}\cong \bigotimes_{i=1}^kK_T(\mathbb{T}_X(d_i)_{v_i})_\mathbb{F},\] where the tensor product on the right hand side is over $\mathbb{F}$. 
Using the same argument used to prove \cite[Corollary 4.13]{PT0} (based on Theorem \ref{MacMahonsurface} and Theorem \ref{thmtoric}), we obtain the following equality
\begin{equation}\label{dtseriestoric}
\sum_{d\geq 0}\dim_\mathbb{F} K_T(\mathcal{DT}_X(0, d))_\mathbb{F}\cdot q^d=\prod_{d\geq 1}(1-q^d)^{-d\chi_c(S)},
\end{equation}
compare with \eqref{wallcrossingDT} and \eqref{wallcrossingDT2}.
\end{remark}

	\bibliographystyle{amsalpha}
\bibliography{math}

\textsc{\small Tudor P\u adurariu: Columbia University, 
Mathematics Hall, 2990 Broadway, New York, NY 10027, USA.}\\
\textit{\small E-mail address:} \texttt{\small tgp2109@columbia.edu}\\

\textsc{\small Yukinobu Toda: Kavli Institute for the Physics and Mathematics of the Universe (WPI), University of Tokyo, 5-1-5 Kashiwanoha, Kashiwa, 277-8583, Japan.}\\
\textit{\small E-mail address:} \texttt{\small yukinobu.toda@ipmu.jp}\\

\end{document}